\title{ Hidden constructions in abstract algebra,\\
Krull Dimension, Going Up, Going Down}
\author{
Thierry Coquand
(\thanks {~
Chalmers, University of G\"oteborg, Sweden,
email: coquand@cs.chalmers.se}~)
Henri Lombardi
(\thanks{~
Equipe de Math\'ematiques, CNRS UMR 6623, UFR des Sciences et
Techniques,
Universit\'e de Franche-Comt\'e, 25 030 BESANCON cedex, FRANCE,
email: lombardi@math.univ-fcomte.fr}~),
}
\date{08 2001}
\DeclareSymbolFont{lasy}{U}{lasy}{m}{n}
\let\Box\undefined
\DeclareMathSymbol\Box{\mathord}{lasy}{"32}
\newtheorem{theorem}{Theorem}[section]
\newtheorem{proposition}[theorem]{Proposition}
\newtheorem{lemma}[theorem]{Lemma}
\newtheorem{corollary}[theorem]{Corollary}
\newtheorem{fact}[theorem]{Fact}
\newtheorem{definition}[theorem]{Definition}
\newtheorem{notation}[theorem]{Notation}
\newcommand {\junk}[1]{}
\newenvironment{proof}[1]{
\trivlist \item[\hskip \labelsep{\bf #1.}]\hskip 0pt\\}{\hfill 
\mbox{$\Box$}
\endtrivlist}
\def\.@{\char'76}
\def \noi {\noindent}
\def \ss {\smallskip}
\def \sni {\ss\noi}
\def \ms {\medskip}
\def \mni {\ms\noi}
\def \bs {\bigskip}
\def \bni {\bs\noi}
\def \alb {\allowbreak}
\def \aoe {{\land,\lor,\exists}}
\def \these {\quad\vdash\quad}
\def \thesu {\quad\vdash_\aoe\quad}
\def \cl {{\circ}}
\def\equidef{\buildrel{{\rm def}}\over{\quad\Longleftrightarrow\quad}}
\def\gen#1{\left\langle{#1}\right\rangle}
\def \snic#1 {\sni\centerline{$#1$}\ss}
\def \snif#1#2#3 {\vspace{#1}\noindent\centerline{$#3$}\vspace{#2}}
\def \fait#1#2 {\vspace{.1cm} \begin{tabular}{p{2cm}p{2cm}l l l}
$\cl\;#1\;\cl$ & $\these$ & $#2$ \end{tabular}\vspace{.1cm}}
\def \faut#1#2 {\vspace{.1cm} \begin{tabular}{p{2cm}p{2cm}l l l}
               $\cl\;#1\;\cl$ & $\thesu$ & $#2$
                  \end{tabular}}
\def \ov#1 {\overline#1 }
\def \wi#1 {\widetilde#1 }
\def \sd#1#2 {\widetilde#1_#2 }
\def \Z{\mathbb{Z}}
\def \N{\mathbb{N}}
\def \cC {{\cal C}}
\def \cI {{\cal I}}
\def \cN {{\cal N}}
\def \cP {{\cal P}}
\def \cQ {{\cal Q}}
\def \cM {{\cal M}}
\def \cS {{\cal S}}
\def \vu {\,\vee\,} 
\def \vi {\,\wedge\,} 
\def \Vu {\bigvee}
\def \Vi {\bigwedge}
\def \vda {\,\vdash\,}
\def \Un {{\bf 1}}
\def \Deux {{\bf 2}}
\def \Trois {{\bf 3}}
\def \Quatre {{\bf 4}}
\def \Pf {{{\rm P}_{{\rm f}}}}
\def \Hom {{\rm Hom}}
\def \dim {{\rm dim}}
\def \Spec {{\rm Spec}}
\def \Zar {\,{\rm Zar}}
\def \Kr {\,{\rm Kr}}
\def \Kru {\,{\rm Kru}}
\def \num {{n$^{{\rm o}}$}}
\def \hdrz {induction hypothesis}
\def \cad {\textit{i.e.}, }
\def \ssi {if, and only if, }
\def \Propeq {The following properties are equivalent: }
\def \propeq {the following properties are equivalent: }
\def \disept {17$^{{\rm th}}$ Hilbert's problem }
\def \coli {linear combination }
\def \com {comaximal }
\def \comz {comaximal}
\def \entrel {entailment relation }
\def \entrelz {entailment relation}
\def \entrels {entailment relations }
\def \entrelsz  {entailment relations}
\def \homo {homomorphism }
\def \homos {homomorphisms }
\def \idep {prime ideal }
\def \itf {\tf ideal }
\def \itfs {\tf ideals }
\def \itfz {\tf ideal}
\def \mo {monoid }
\def \moco {\com \mos}
\def \mocoz {\com \mosz}
\def \mos {monoids }
\def \mosz {monoids}
\def \moz {monoid}
\def \nst {Null\-stellen\-satz }
\def \nstz {Null\-stellen\-satz}
\def \polcar {characteristic polynomial }
\def \proi {idealistic prime }
\def \prois {idealistic primes }
\def \proiz {idealistic prime}
\def \proisz {idealistic primes}
\def \proc {idealistic chain }
\def \procs {idealistic chains }
\def \procz {idealistic chain}
\def \procsz {idealistic chains}
\def \proel {elementary \proc}
\def \proels {elementary \procs}
\def \proelz {elementary \procz}
\def \proelos {\proels of length }
\def \prolo {\proc of length }
\def \prolos {\procs of length }
\def \tf {finitely generated }
\def \trdi {distributive lattice }
\def \trdis {distributive lattices }
\def \trdiz  {distributive lattice}
\def \trdisz  {distributive lattices}
\def \cov {cons\-truc\-ti\-ve }
\def \LLPO{{\bf LLPO}}
\def \pte {excluded middle principle }
\def \ptez {excluded middle principle}
\def \tcg {compactness theorem }
\def \tcgz {compactness theorem}
\def \Tcgi {The \tcg implies the following result. }
\begin{document}
\maketitle

\begin{abstract}
We present constructive versions of Krull's dimension theory for
commutative rings and distributive lattices. The foundations of these
constructive versions are due to Joyal, Espan\~ol and the authors.
We show  that this gives
a constructive version of basic classical theorems (dimension
of finitely presented algebras, Going up and Going down theorem, 
\ldots),
and hence
that we get an explicit computational content where these abstract 
results are
used to show the existence of concrete elements. This can be seen
as a partial realisation of Hilbert's program for classical abstract
commutative algebra.
\end{abstract}
\bni MSC 2000: 13C15, 03F65, 13A15, 13E05


\bni Key words: Krull dimension, Going Up, Going Down,
Constructive Mathematics.

\newpage

\tableofcontents

\newpage

\section*{Introduction} \label{sec Introduction}
\addcontentsline{toc}{section}{Introduction}
We present constructive versions of Krull's dimension theory for
commutative rings and distributive lattices. The foundations of these
constructive versions are due to Joyal, Espan\~ol and the authors.
We show  that this gives
a constructive version of basic classical theorems (dimension
of finitely presented algebras, Going up and Going down theorem, 
\ldots),
and hence
that we get an explicit computational content when these abstract 
results are
used to show the existence of concrete elements. This can be seen
as a partial realisation of Hilbert's program for classical abstract
commutative algebra.

 Our presentation follows Bishop's style (cf. in algebra \cite{MRR}).
As much as possible, we kept minimum any explicit mention to logical 
notions.
When we say that we have a constructive version of an abstract
algebraic theorem, this means that we have a theorem the proof of
which is constructive, which has a clear computational content, and
from which we can recover the usual version of the abstract theorem
by an immediate application of a well classified non constructive
principle. An abstract
classical theorem can have several distinct interesting constructive
versions.

 In the case of abstract theorem in commutative algebra, such a non
constructive principle is the compactness theorem, which
claims the existence of a model of a formally consistent propositional
theory\footnote{Mathematically, this result can be seen as stating
the compactness of product spaces $\{0,1\}^V$; thus it can be seen
as a special case of Tychonov's theorem.}.
When this is used for algebraic structures of enumerable
presentation (in a suitable sense) this principle is
nothing else than a reformulation of Bishop $\LLPO$ (a
real number is $\geq 0$ or $\leq 0$).

 To avoid the use of \tcg is not
motivated by philosophical but by practical
considerations. The use of this principle leads indeed to replace
quite direct (but usually hidden) arguments by indirect ones which are
nothing else than a double contraposition of the direct proofs,
with a corresponding lack of computational content.
For instance \cite{clr} the abstract proof of \disept claims~: if
the polynomial $P$ is not a sum of rational fractions there is
a field $K$ in which one can find an absurdity by reading the 
(constructive)
proof that the polynomial is everywhere positive or zero. The direct
version of this abstract proof is: from the (constructive) proof
that  the polynomial is everywhere positive or zero, one can show
(using arguments of the abstract proofs) that any attempt to build $K$
will fail. This gives explicitly the sum of squares we are looking for.
In the meantime, one has to replace the abstract result: ``any real
field can be ordered'' by the constructive theorem: ``in a field
in which any attempt to build an ordering fails $-1$ is a sum of 
squares''.
One can go from this explicit version to the abstract one by \tcgz, 
while
the proof of the explicit version is hidden in the
algebraic manipulations that appear in the usual classical proof of the
abstract version.

\mni Here is the content of the paper.

\paragraph{Pseudo regular sequences and Krull dimension of commutative 
rings}~


\noindent In  section \ref{secKrA} we give ``more readable'' proofs for
some of the results contained in \cite{lom}, which were there proved
using the notion of dynamical structures \cite{clr}. The central notion 
which
is used is the one of {\em partial specification of a chain of ideal
primes}. Abstract constructions on chains of prime ideals are then
expressed constructively in the form of simultaneous collapsing theorems
\cite{clr}
(theorem \ref{ThColSimKrA}).
We present the notion of pseudo-regular sequence (a weakened form
of the notion of regular sequence), which allows us to define 
constructively
the Krull dimension of a ring. We show in this way that the notion of 
Krull
dimension
has an explicit computational content in the form of existence (or
non existence) of some algebraic identities. This confirms the feeling 
that
commutative algebra can be seen computationally as a machinery producing
algebraic identities (the most famous of which being called \nstz).
Finally we give a constructive version of the theorem which says that
the Krull dimension of a ring is the upper bound of the Krull
dimension of its localizations along maximal ideals.

\paragraph{Distributive lattices}~

\noindent In section \ref{secKrullTreil}  we develop the theory of
Krull dimension of distributive lattices, first in the style of
 section  \ref{secKrA}, and then in the style of the theory of
Joyal. We then show the connections between these two presentations.
An important simplification of proofs and computations
is obtained via the systematic use of the notion of entailment
relation, which has its origin in the cut rule  in
Gentzen's sequent calculus, with the fundamental theorem 
\ref{thEntRel1}.

\paragraph{Zariski and Krull lattice}

\noindent In  section \ref{secZariKrull}  we define the
Zariski lattice of a commutative ring (whose elements are
radicals of finitely generated ideals), which is the constructive
counterpart of Zariski spectrum~: the points of Zariski spectrum
are the prime ideals of Zariski lattice, and the constructible subsets 
of
Zariski spectrum are the elements of the Boolean algebra generated
by the Zariski lattice.
Joyal's idea is to define Krull dimension of a commutative ring as
the dimension of its Zariski lattice. This avoids any mention of
prime ideals. We show the equivalence between this (constructive) point
of view of the (constructive) presentation given in section
\ref{secKrA}.

\paragraph{Going Up and Going Down}~

\noindent  Section \ref{secGUGD} presents the famous
Going up theorem for integral extensions, and the
Going down  theorem for integral extensions of integrally closed
domains
and for flat extensions.
We show that these theorems, which seem at first quite abstract (since
they claim the existence of some prime ideals) have quite concrete
meaning as constructions of algebraic identities.
These constructive versions may seem 
at first a little strange, but they are
directly suggested by this process of 
making explicit the abstract arguments
of these classical results.

\paragraph{Conclusion}~

\noindent
 This article confirms the actual realisation of Hilbert's program
for a large part of abstract commutative algebra.
(cf.  \cite{clr,cp,kl,lom95,lom97,lom98,lom,lom99,lom99a,lq99}).
The general idea is to replace ideal abstract structures
by {\em partial specifications} of these structures.
The very short elegant abstract proof which uses these ideal
objects has then a corresponding computational version at the
level of the partial specifications of these objects.
Most of classical results in abstract commutative algebras,
the proof of which seem to require in an essential way excluded
middle and Zorn's lemma, seem to have in this way a corresponding
constructive version. Most importantly, the abstract proof of the
classical theorem always
contains, more or less implicitly, the constructive proof of
the corresponding constructive version.

Finally one should note that the constructive theorems which concern
the Krull dimension
of polynomial rings and of finitely presented algebra over a field, the
Going up and Going down are new (they could not be obtained in
the framework of Joyal's theory as long as one only looks at Zariski
lattice without explicitating the computations and algebraic identities
involved there).

\section{Constructive definition of Krull dimension of commutative 
rings}
\label{secKrA}
Let $R$ be a commutative ring. We write
$\gen{J}$  or explicitly
$\gen{J}_R$  the ideal of $R$ generated by the subset $J\subseteq R$. We 
write
$\cM(U)$ the monoid ({\footnote{~A monoid will always be 
multiplicative.}})
generated by the subset $U\subseteq R$.

\subsection{Idealistic chains}
\label{subsecProc}

\begin{definition}
\label{defproch1} In a commutative ring $R$
\begin{itemize}
\item
A {\em partial specification for a prime ideal} (in abreviated form
{\em idealistic prime}) is a couple $\cP= (J,U)$ of subsets of $R$.
\item An \proi $\cP= (J,U)$ is said to be  {\em complete} if $J$ is an 
ideal
$U$ is a  \mo and   $J+U=  U$.
\item Let $\cP_1= (J_1,U_1)$ and $\cP_2= (J_2,U_2)$  be two \proisz.
We say that
{\em $\cP_1$ is contained into $\cP_2$} written
$\cP_1\subseteq \cP_2$ if $J_1\subseteq J_2$ and  $U_2\subseteq U_1$.\\
We say that {\em $\cP_2$ refines $\cP_1$} and we write
$\cP_1\leq \cP_2$ if $J_1\subseteq J_2$ and  $U_1\subseteq U_2$.
\item
A {\em partial specification of a chain of prime ideals}
(in abreviated form {\em \procz}) is defined as follows.
An {\em \prolo $\ell$}  is
a list of $\ell+1$ \proisz:  $\cC= (\cP_0,\ldots,\cP_\ell)$
($\cP_i= (J_i,U_i)$). We shall write $\cC^{(i)}$ for $\cP_i$.
The \proc will be said
{\em finite} iff all sets  $J_i$ and $U_i$ are finite.
\item
An \proc $\cC= (\cP_0,\ldots,\cP_\ell)$ is said to be
{\em complete} iff the
\prois $\cP_i$ are complete and if we have
$\cP_i\subseteq \cP_{i+1}$ $(i= 0,\ldots ,\ell-1)$.
\item
Let $\cC= (\cP_0,\ldots,\cP_\ell)$ be two \prolos $\ell$ and
$\cC'= (\cP'_0,\ldots,\cP'_\ell)$. We say that {\em $\cC'$ is a 
refinement of
$\cC$} and we write
$\cC\leq \cC'$ if $\cP_i\leq \cP'_i$ for $i= 0,\ldots ,\ell$.
\end{itemize}
\end{definition}
We can think of an
 \proc $\cC$ of length $\ell$  in $A$ as a partial specification of
an increasing chain of prime ideals (in the usual sense)
 $P_0,\ldots,P_\ell$ such  that
$\cC\leq (\cQ_0,\ldots \cQ_\ell)$, where $\cQ_i= (P_i,A\setminus P_i)$
$(i= 0,\ldots,\ell)$.
\begin{fact}
\label{factProcComp}
Any \proc
$\cC= ((J_0,U_0),\ldots,(J_\ell,U_\ell))$ generates a complete 
minimal \proc
$$\cC'= ((I_0,V_0),\ldots,(I_\ell,V_\ell)) $$  defined by
$ I_0= \gen{J_0}$, $ I_1= \gen{J_0\cup J_1}$,\ldots,
$I_\ell= \gen{J_0\cup\cdots\cup J_\ell}$, $U'_i= \cM(U_i)$
$(i= 0,\ldots,\ell)$,  $V_\ell= U'_\ell+I_\ell$,
$V_{\ell-1}= U'_{\ell-1}V_\ell+I_{\ell-1}$, \ldots,
$V_0= U'_0V_1+I_0= U'_0(U'_1(\cdots(U'_\ell+I_\ell)+\cdots)+I_1)+I_0$.
Furthermore any element of $V_0$ can be rewritten as
$$u_0\cdot(u_1\cdot(\cdots(u_\ell+j_\ell)+\cdots)+j_1)+j_0= u_0\cdots 
u_\ell+
u_0\cdots u_{\ell-1}\cdot j_\ell+\cdots+u_0\cdot j_1+j_0
$$
with $j_i\in \gen{J_i}$ and $u_i\in\cM(U_i)$.
\end{fact}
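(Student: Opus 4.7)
The fact bundles several claims: the chain $\cC'$ is complete, it refines $\cC$, it is the minimal such refinement, and every element of $V_0$ admits the stated explicit form. I would handle them in this order, doing most of the work by induction down from $i=\ell$ to $i=0$.

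First the easy parts. The $I_i$ are ideals and the inclusions $I_0\subseteq I_1\subseteq\cdots\subseteq I_\ell$ are immediate from the definition $I_i=\gen{J_0\cup\cdots\cup J_i}$. The refinement conditions $J_i\subseteq I_i$ and $U_i\subseteq U'_i$ are also immediate, and $U'_i\subseteq V_i$ follows from $V_\ell\supseteq U'_\ell$ and, for $i<\ell$, from picking $v=1\in V_{i+1}$ and $j=0$ in $V_i=U'_iV_{i+1}+I_i$. The saturation $I_i+V_i=V_i$ is also tautological from the recursive definition.

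The substantive point is that each $V_i$ is a \moz. I would prove this by descending induction. For $V_\ell=U'_\ell+I_\ell$ one just expands
$$(u+j)(u'+j')=uu'+(uj'+u'j+jj')\in U'_\ell+I_\ell,$$
using that $U'_\ell$ is a \mo and $I_\ell$ is an ideal, and $1=1+0\in V_\ell$. For the inductive step, an element of $V_i=U'_iV_{i+1}+I_i$ has the form $uv+j$ with $u\in U'_i$, $v\in V_{i+1}$, $j\in I_i$, and
$$(u_1v_1+j_1)(u_2v_2+j_2)=(u_1u_2)(v_1v_2)+(u_1v_1j_2+u_2v_2j_1+j_1j_2),$$
which lies in $U'_iV_{i+1}+I_i$ because $V_{i+1}$ is already a \mo by \hdrz. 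Inclusion $V_{i+1}\subseteq V_i$ then follows by writing $v=1\cdot v+0$. Together with the previous paragraph this shows $\cC'$ is complete and satisfies $\cC\leq\cC'$.

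For minimality, let $\cC''=((I''_i,V''_i))$ be any complete \proc refining $\cC$. Since $J_k\subseteq I''_k\subseteq I''_i$ for $k\leq i$, taking the generated ideal gives $I_i\subseteq I''_i$. For the \mosz, descending induction again: $U'_\ell\subseteq V''_\ell$ (because $U_\ell\subseteq V''_\ell$ and $V''_\ell$ is a \moz), hence $V_\ell=U'_\ell+I_\ell\subseteq V''_\ell+I''_\ell=V''_\ell$ by completeness of $\cC''$; and for the step, $V_i=U'_iV_{i+1}+I_i\subseteq V''_i\cdot V''_{i+1}+I''_i\subseteq V''_i+I''_i=V''_i$, using $V''_{i+1}\subseteq V''_i$ and that $V''_i$ is a \moz.

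Finally the explicit rewriting is obtained by unfolding the recursive definition: every element of $V_0$ is $u_0v_1+j_0$ with $v_1\in V_1$, then $v_1=u_1v_2+j_1$, \ldots, and at the bottom $v_\ell=u_\ell+j_\ell$; substituting successively gives
$$u_0\cdot(u_1\cdot(\cdots(u_\ell+j_\ell)+\cdots)+j_1)+j_0,$$
and distributing yields the claimed sum $u_0\cdots u_\ell+u_0\cdots u_{\ell-1}j_\ell+\cdots+u_0j_1+j_0$. No step is really an obstacle; the only thing to be careful about is keeping the bookkeeping of the descending recursion straight, particularly in the \mo verification where one must resist the temptation to pass to finite sums instead of single products $uv+j$.
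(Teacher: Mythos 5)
Most of what you do is correct, and it is exactly the routine verification the paper silently leaves to the reader (the Fact is stated without proof): the descending induction showing each $V_i$ is a \mo containing $1$, the saturation $I_i+V_i=V_i$, the inclusions $I_i\subseteq I_{i+1}$, $V_{i+1}\subseteq V_i$, $U_i\subseteq V_i$ making $\cC'$ a complete refinement of $\cC$, and the minimality argument are all sound, and you are right that $V_i$ must be kept as the set of single expressions $uv+j$ rather than finite sums.

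There is, however, a gap in the final ``furthermore'' clause. Unfolding the recursion gives an element of $V_0$ in the form $u_0\cdot(u_1\cdot(\cdots(u_\ell+j'_\ell)+\cdots)+j'_1)+j'_0$ with $j'_i\in I_i=\gen{J_0\cup\cdots\cup J_i}$, whereas the statement asserts the stronger memberships $j_i\in\gen{J_i}$; your proof stops at the weaker form and does not notice the discrepancy. To finish, write $j'_i=a^{(i)}_0+\cdots+a^{(i)}_i$ with $a^{(i)}_k\in\gen{J_k}$, expand into the distributed form $u_0\cdots u_\ell+\sum_i u_0\cdots u_{i-1}j'_i$, and absorb each cross term $u_0\cdots u_{i-1}a^{(i)}_k$ with $k<i$ into the $k$-th slot by setting $j_k:=a^{(k)}_k+\sum_{i>k}u_k\cdots u_{i-1}a^{(i)}_k$, which lies in $\gen{J_k}$ because $\gen{J_k}$ is an ideal; with these $j_k$ the displayed identity holds with the claimed constraints (equivalently, in the nested form one pushes the lower-index part of each $j'_i$ outward, multiplying it by the intermediate $u$'s). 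This extra step is not cosmetic: the membership $j_i\in\gen{J_i}$ is precisely the form invoked in the alternative definition of collapsus (definition \ref{defproch2}), so proving the rewriting only with $j_i\in I_i$ would leave that equivalence unjustified.
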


\begin{definition}
\label{defConjug}
An ideal $I$ and a
\mo $S$ are said to be {\em conjugate} if we have:
$$\begin{array}{rcl}
(s\cdot a\in I, \; s\in S)   &\Longrightarrow & a\in I   \\
 a^n\in I              &\Longrightarrow & a\in I  \quad (n\in\N,\;n>0) 
\\
(j\in I, \; s\in S)    &\Longrightarrow & s+j\in S   \\
 s_1\cdot s_2\in S           &\Longrightarrow & s_1 \in S
\end{array}$$
In this case we say also that the \proi $(I,S)$ is {\em saturated}.
\end{definition}
For instance a detachable prime ideal
({\footnote{~A subset of a set is said to be detachable if we can decide
membership to this subset. For example the \itfs of a polynomial ring
with integer coefficients are detachable.}})
and the complementary \mo are conjugate.
When an ideal $I$ and a \mo $S$ are conjugate we have
$$ 1\in I\quad \Longleftrightarrow\quad  0\in S\quad
\Longleftrightarrow\quad  (I,S)= (A,A)
$$
\begin{definition}
\label{defproch2}  {\em (Collapsus)}
Let  $\cC= ((J_0,U_0),\ldots,(J_\ell,U_\ell))$ be an \proc and
$\cC'= ((I_0,V_0),\alb\ldots,\alb(I_\ell,V_\ell))$ the complete \proc
it generates.
\begin{itemize}
\item  We say that the \proc $\cC$ {\em collapses}
iff we have
$0\in V_0$. An alternative definition is: there exists
$j_i\in \gen{J_i}$, $u_i\in\cM(U_i)$, $(i= 0,\ldots,\ell)$
satisfying the equation
$$u_0\cdot(u_1\cdot(\cdots(u_\ell+j_\ell)+\cdots)+j_1)+j_0= 0
$$
Such a relation is called a {\em collapsus} of the \proc $\cC$.
\item An \proc is said to be {\em saturated} iff it is complete
and the \prois $(J_i,U_i)$ are saturated.
\item  The  \proc
$((A,A),\ldots,(A,A))$ is said to be {\em trivial}:
a saturated chain that collapses is trivial.
\end{itemize}
\end{definition}

Notice that the \proi $(0,1)$ collapses if and only if $1= _A0$.

The following lemma is direct.
\begin{fact}
\label{fact}
\label{factColAc}
An \proc $\cC= ((J_0,U_0),\ldots,(J_\ell,U_\ell))$ with
$U_h\cap J_h\neq \emptyset$  for some $h$ collapses. More generally
if an \proc $\cC'$ extracted from an \proc $\cC$ collapses then
$\cC$ collapses. Similarly if
$\cC$ collapses, any refinement of $\cC$ collapses.
\end{fact}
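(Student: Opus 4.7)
My plan is to handle each of the three assertions by exhibiting explicit collapsus data, relying throughout on the expanded form of elements of $V_0$ provided by Fact \ref{factProcComp}, namely
$$u_0\cdot(u_1\cdot(\cdots(u_\ell+j_\ell)+\cdots)+j_1)+j_0
\;=\;u_0\cdots u_\ell+u_0\cdots u_{\ell-1}\,j_\ell+\cdots+u_0\,j_1+j_0.$$
All three statements will reduce to the manipulation of such expressions, with no ideal-theoretic subtlety.

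For the first assertion, I would start from an element $x\in U_h\cap J_h$ and build a collapsus directly. Take $u_i=1\in\cM(U_i)$ for every $i\neq h$, $u_h=x\in U_h\subseteq\cM(U_h)$, $j_i=0\in\gen{J_i}$ for every $i\neq h$, and $j_h=-x\in\gen{J_h}$. Substituting into the nested expression and collapsing the factors $1$, one sees that the innermost expression up to level $h$ is $u_h+j_h=x-x=0$, after which each outer layer multiplies by $1$ and adds $0$, so the whole sum is $0$.

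For the second assertion, given that an extracted chain $\cC'=(\cP_{i_0},\dots,\cP_{i_k})$ with $i_0<\cdots<i_k$ collapses with data $u'_t\in\cM(U_{i_t})$ and $j'_t\in\gen{J_{i_t}}$, I would lift this to a collapsus of $\cC$ by filling in the missing slots with $u_s=1$ and $j_s=0$ for $s\notin\{i_0,\dots,i_k\}$, and setting $u_{i_t}=u'_t$, $j_{i_t}=j'_t$. Using the expanded form, every term $(\prod_{i<k}u_i)\,j_k$ either corresponds to a new index (so $j_k=0$ and the term vanishes) or to a retained index (so the inserted factors of $1$ leave the term unchanged), and the product $\prod_i u_i$ equally ignores the inserted $1$'s; hence the total equals the original collapsus value, namely $0$.

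For the third assertion, the refinement condition $\cP_i\leq\cP'_i$ gives $J_i\subseteq J'_i$ and $U_i\subseteq U'_i$, whence $\gen{J_i}\subseteq\gen{J'_i}$ and $\cM(U_i)\subseteq\cM(U'_i)$; consequently any witnessing families $(u_i)$, $(j_i)$ for the collapsus of $\cC$ are automatically admissible witnesses for $\cC'$, and the same algebraic identity furnishes the collapsus of $\cC'$. I do not foresee a real obstacle; the only care needed is to identify correctly which factors appear in front of each $j_k$ in the expanded sum (so that the padding argument in the second assertion really preserves the value), and this is exactly what Fact \ref{factProcComp} records explicitly.
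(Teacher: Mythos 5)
Your proof is correct: the paper states this fact without proof ("The following lemma is direct"), and your explicit witnesses --- padding with $u_s=1\in\cM(U_s)$ and $j_s=0\in\gen{J_s}$ for the first two assertions, and monotonicity of $\gen{\,\cdot\,}$ and $\cM(\,\cdot\,)$ for the third --- are exactly the direct verification intended, using the expanded form from Fact \ref{factProcComp}. Nothing further is needed.
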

The proofs of the following properties are instructive.
\begin{fact}
\label{factSatur}
Let $\cC_1= (\cP_0,\ldots,\cP_\ell)$ and
$\cC_2= (\cP_{\ell+1},\ldots,\cP_{\ell+r})$
be two  \procs on a ring $A$. Let $\cC$ be $\cC_1\bullet
\cC_2= (\cP_0,\ldots,\cP_{\ell+r})$.
\begin{itemize}
\item [$(1)$] Suppose that $\cC_1$ is saturated. Then $\cC$ collapses in 
$A$
\ssi $\cP_\ell\bullet \cC_2$   collapses in  $A$
\ssi $\cC_2$   collapses in the quotient $A/I_\ell$
($\cP_\ell= (I_\ell,U_\ell)$).
\item [$(2)$] Suppose that $\cC_2$ is complete. Then $\cC$ collapses in 
$A$
\ssi $ \cC_1\bullet\cP_{\ell+1}$   collapses in $A$
\ssi $\cC_2$   collapses in the localisation $A_{U_{\ell+1}}$
($\cP_{\ell+1}= (I_{\ell+1},U_{\ell+1})$).
\item [$(3)$] Suppose that $\cC_1$ is saturated and $\cC_2$ is complete.
Then $\cC$ collapses in $A$  \ssi
$(\cP_\ell,\cP_{\ell+1})$   collapses in $A$
\ssi $I_\ell\cap U_{\ell+1}\not=  \emptyset$.
\end{itemize}
\end{fact}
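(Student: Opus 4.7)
My plan is to unpack the nested collapsus
$$u_0(u_1(\cdots(u_{\ell+r}+j_{\ell+r})+\cdots)+j_1)+j_0=0$$
using the recursive abbreviation $T_k:=u_k T_{k+1}+j_k$ with $T_{\ell+r+1}:=1$, so that a collapsus of $\cC$ is just the assertion $T_0=0$ (and the inner block $T_{\ell+1}=u_{\ell+1}(\cdots)+j_{\ell+1}$ is the candidate collapsus expression for $\cC_2$). All three parts will then be handled by peeling factors off this equation, either from the outside in (using saturation of $\cC_1$) or from the inside out (using completeness of $\cC_2$).

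For part (1), the ``extracted chain'' direction is immediate from fact \ref{factColAc}. For the converse, I propagate inward: $u_0 T_1=-j_0\in J_0$ together with $u_0\in U_0$ and the conjugacy of $(J_0,U_0)$ gives $T_1\in J_0$. Then $u_1 T_2=T_1-j_1\in J_1$ (using the completeness inclusion $J_0\subseteq J_1$), so by conjugacy of $(J_1,U_1)$ we get $T_2\in J_1$. Iterating through $i=0,1,\ldots,\ell-1$ yields $T_\ell\in J_{\ell-1}\subseteq J_\ell$, hence $u_\ell T_{\ell+1}=T_\ell-j_\ell\in J_\ell$, which is a collapsus of $\cP_\ell\bullet\cC_2$ (take $u^*_\ell:=u_\ell$ and $j^*_\ell:=-u_\ell T_{\ell+1}\in J_\ell$). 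One further application of conjugacy of $(J_\ell,U_\ell)$ yields $T_{\ell+1}\in J_\ell$, i.e.\ the image of $T_{\ell+1}$ vanishes modulo $I_\ell$; this is exactly a collapsus of $\cC_2$ in $A/I_\ell$. The converse direction is immediate: from $T_{\ell+1}\equiv 0\pmod{J_\ell}$ one lifts to $A$ and takes $u_\ell:=1\in U_\ell$, $j_\ell:=-T_{\ell+1}\in J_\ell$.

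For part (2), I propagate outward using the completeness of each $\cP_i$ for $\ell+1\leq i\leq\ell+r$: the identity $J_i+U_i=U_i$ together with the inclusions $U_{i+1}\subseteq U_i$ gives, by descending induction on $k=\ell+r,\ldots,\ell+1$, that $T_k\in U_k$. Setting $u^*_{\ell+1}:=T_{\ell+1}\in U_{\ell+1}$ and $j^*_{\ell+1}:=0$, the recursion for $\cC_1\bullet\cP_{\ell+1}$ reproduces the same sequence $T_\ell,\ldots,T_0$, so $\cC_1\bullet\cP_{\ell+1}$ collapses precisely when $\cC$ does; the reverse implication is again immediate from fact \ref{factColAc}. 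The localisation equivalence is then obtained by interpreting the collapsus in $A_{U_{\ell+1}}$: inverting the elements of $U_{\ell+1}$ turns the factor $T_{\ell+1}$ into a unit, and a standard clearing-denominators argument translates an equality in $A_{U_{\ell+1}}$ into the shape required in $A$ (up to multiplying by some $s\in U_{\ell+1}$, which is absorbed into the monoid factor at level $\ell+1$).

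Finally, part (3) follows by combining the previous two: by (1), $\cC$ collapses iff $\cP_\ell\bullet\cC_2$ collapses; and applying (2) to the saturated singleton $(\cP_\ell)$ together with the complete $\cC_2$, this in turn collapses iff $(\cP_\ell,\cP_{\ell+1})$ collapses. The last equivalence is direct: from $u_\ell(u_{\ell+1}+j_{\ell+1})+j_\ell=0$, set $x:=u_{\ell+1}+j_{\ell+1}\in U_{\ell+1}$ (by completeness of $\cP_{\ell+1}$); then $u_\ell x=-j_\ell\in I_\ell$ and the saturation of $(I_\ell,U_\ell)$ force $x\in I_\ell$, exhibiting $x\in I_\ell\cap U_{\ell+1}$. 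Conversely, any $a\in I_\ell\cap U_{\ell+1}$ yields the explicit collapsus $1\cdot(a+0)+(-a)=0$. The main subtlety throughout is the bookkeeping about which ideal each intermediate $T_k$ lies in: conjugacy of $(J_i,U_i)$ only cancels a factor $u_i\in U_i$ against membership in $J_i$ itself (not in the larger $J_\ell$), and it is precisely the chain of completeness inclusions $J_{i-1}\subseteq J_i$ that lets the downward induction in (1) close up at step $\ell$, with the dual role played by the monoid inclusions $U_{i+1}\subseteq U_i$ in the upward induction for (2).
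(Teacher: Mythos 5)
Your proof is correct, and there is nothing in the paper to compare it with: the proof of this fact is explicitly ``Left to the reader'', and your peeling argument --- conjugacy of the saturated outer primes pushes the partial expressions $T_k$ into the ideals in (1), completeness of the inner primes pushes them into the monoids in (2), and (3) is the composite of the two plus one direct computation --- is exactly the direct verification the definitions invite; the bookkeeping you single out (conjugacy only cancels $u_i$ against membership in $J_i$ itself, and the chain inclusions are what close the inductions) is indeed where all the content lies. One point you should make explicit: the third equivalence of item (2) as printed, ``$\cC_2$ collapses in the localisation $A_{U_{\ell+1}}$'', is a misprint for $\cC_1$ (dually to item (1), where it is $\cC_2$ that collapses in $A/I_\ell$). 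Indeed no hypothesis at all is made on $\cC_1$ in (2), so one may take $1\in J_0$, which makes $\cC$ collapse, while a complete $\cC_2$ such as $((0,\cM(x)))$ in $k[x]$ does not collapse in $k[x]_x$; so the literal clause cannot be equivalent to the collapse of $\cC$. Your localisation paragraph never names the chain, but the argument you sketch (the inner factor $u_{\ell+1}+j_{\ell+1}$, resp.\ $T_{\ell+1}$, lies in $U_{\ell+1}$ and becomes a unit, so it can be divided out of the expanded collapsus; conversely, clear denominators in a collapsus of $\cC_1$ over $A_{U_{\ell+1}}$ and absorb the resulting element $s\in U_{\ell+1}$ as the monoid element at level $\ell+1$, with $j_{\ell+1}=0$) is precisely a proof that $\cC_1\bullet\cP_{\ell+1}$ collapses in $A$ \ssi $\cC_1$ collapses in $A_{U_{\ell+1}}$; state it in that corrected form. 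With that clarification the three items are fully proved.
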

\begin{proof}{Proof}
Left to the reader.
\end{proof}
\subsection{Simultaneous collapse}
\label{subsecColsim}
\begin{notation}
\label{notaAjou}
{\rm  In the sequel we will use the following notations
for an \proi or an \proc obtained by refinement.
If $\cP= (J,U)$, $\cC= (\cP_1,\ldots \cP_n)$, we write
\begin{itemize}
\item  $(J,x;U)$ or still $\cP\,\& \left\{x\in \cP \right\}$
for $(J\cup \left\{x\right\}, U)$
\item  $(J;x,U)$  or still $\cP\,\& \left\{x\notin \cP \right\} $
for $(J,U\cup \left\{x\right\})$
\item   $\cP\,\& \left\{I\subseteq \cP \right\} $
for $(J\cup I, U)$
\item   $\cP\,\& \left\{V\subseteq A\setminus\cP \right\} $
for $(J, U\cup V)$
\item  $\cC\,\& \left\{x\in \cC^{(i)} \right\} $
for $(\cP_0,\ldots,\cP_i\,\& \left\{x\in \cP_i \right\},\ldots,\cP_n)$
\item   $\cC\,\& \left\{x\notin \cC^{(i)} \right\} $
for $(\cP_0,\ldots,\cP_i\,\& \left\{x\notin \cP_i 
\right\},\ldots,\cP_n)$
\item etc\dots
\end{itemize}
}
\end{notation}

\begin{theorem}
\label{ThColSim1}{\em (Simulateneous collapse for an \proiz)}\\
Let $\cP= (J,U)$ be an \proi in a commutative ring $R$.
\begin{itemize}
\item [$(1)$] Let $x$ be an element of $R$. Suppose that the \prois
$\cP\,\& \left\{x\in \cP \right\} $
and $\cP\,\& \left\{x\notin \cP \right\} $
both collapse, then so does $\cP$.
\item [$(2)$] The \proi $\cP$ generates a minimum saturated \proiz.
We get it by adding in
$U$ (resp. $J$) any element $x\in R$  such that the \proi
$\cP\,\& \left\{x\in \cP \right\}$
(resp. $\cP\,\& \left\{x\notin \cP \right\}$) collapses.
\end{itemize}
\end{theorem}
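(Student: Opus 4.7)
For part (1), I would unwind the two collapse hypotheses into explicit algebraic identities. A collapse of $\cP\,\&\,\{x\in\cP\}=(J\cup\{x\},U)$ produces $u_1\in\cM(U)$, $j'_1\in\gen{J}$, $a\in R$ with $u_1+j'_1+ax=0$, so $u_1\equiv -ax\pmod{\gen{J}}$. A collapse of $\cP\,\&\,\{x\notin\cP\}=(J,U\cup\{x\})$ produces $u_2\in\cM(U)$, an integer $n\geq 0$ and $j_2\in\gen{J}$ with $u_2x^n+j_2=0$, that is, $u_2x^n\equiv 0\pmod{\gen{J}}$. Raising the first congruence to the $n$-th power and multiplying by $u_2$ yields $u_2u_1^n\equiv(-a)^nu_2x^n\equiv 0\pmod{\gen{J}}$, and since $u_2u_1^n\in\cM(U)$ this is exactly a collapse of $\cP$.

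For part (2), I would define $\widetilde\cP=(\widetilde J,\widetilde U)$ by letting $\widetilde J$ (resp.\ $\widetilde U$) be the set of $x\in R$ such that $\cP\,\&\,\{x\notin\cP\}$ (resp.\ $\cP\,\&\,\{x\in\cP\}$) collapses. By part (1), if some $x$ lies in both $\widetilde J$ and $\widetilde U$ then $\cP$ itself collapses and $\widetilde\cP=(R,R)$ is the trivial saturated idealistic prime. The inclusions $J\subseteq\widetilde J$ and $U\subseteq\widetilde U$ are immediate (for $x\in J$, the relation $x+(-x)=0$ collapses $\cP\,\&\,\{x\notin\cP\}$, and dually for $U$). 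The saturation axioms are then checked by binomial manipulations in the spirit of part (1): stability of $\widetilde J$ under sums follows by expanding $(a+b)^{n+m-1}$ so that every monomial carries either $a^n$ or $b^m$ as a factor; stability of $\widetilde U$ under products follows from $(j_1+r_1s)(j_2+r_2t)\in\gen{J}+R\cdot st$; and the four conjugation axioms reduce to similar polynomial identities. The mildly tricky case is $(j\in\widetilde J,\ s\in\widetilde U)\Rightarrow s+j\in\widetilde U$: given witnesses $u_1j^n\in\gen{J}$ and $u_2=j_2+rs$, raise $u_2$ to a power $N\geq n$, so that $u_1u_2^N\equiv u_1r^Ns^N\pmod{\gen{J}}$; substituting $s=(s+j)-j$ and expanding binomially, every term but $u_1r^N(-j)^N$ contains $(s+j)$ as a factor, while the pure $j^N$ term is absorbed in $\gen{J}$ by the hypothesis on $j$.

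Minimality is the quickest part: for any saturated $\cP''=(J'',U'')$ refining $\cP$, a witness $ux^n+j=0$ for $x\in\widetilde J$ has $u\in U''$ and $j\in J''$, hence $ux^n\in J''$; the first two conjugation axioms of $\cP''$ give $x^n\in J''$, then $x\in J''$. Dually, a witness $u+j+rx=0$ for $x\in\widetilde U$ gives $u+j\in U''$ by the third axiom, hence $-rx=x\cdot(-r)\in U''$, and the fourth axiom yields $x\in U''$. I expect the main obstacle to be the axiom-3 verification for $\widetilde U$, as it uniquely requires blending both witnesses (the one for $j\in\widetilde J$ and the one for $s\in\widetilde U$) through a double binomial expansion.
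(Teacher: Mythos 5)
Your proof is correct. Part (1) is exactly the paper's argument: the Rabinovitch trick, eliminating $x$ between $u_1+j_1+ax=0$ and $u_2x^n+j_2=0$ to get $u_2u_1^n\in\gen{J}$, so there is nothing to compare there. For part (2) you take a genuinely more computational route. You verify that the candidate pair $(\widetilde{J},\widetilde{U})$ is an ideal, a monoid, and conjugate by explicit eliminations and binomial expansions (your handling of the axiom $j\in I,\ s\in S\Rightarrow s+j\in S$, the one place where the two kinds of witnesses must be blended, is sound: with $u_1j^n\in\gen{J}$ and $u_2\equiv rs$, the expansion of $u_1u_2^N$ with $N\geq n$ after writing $s=(s+j)-j$ does produce a collapse of $(J\cup\{s+j\},U)$), and you prove minimality directly from the four axioms of an arbitrary saturated refinement. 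The paper instead deliberately does no further algebra at this stage: it proves saturation of the candidate by invoking part (1) itself as a cut rule --- for instance, to get $x,y\in\widetilde{J}\Rightarrow x+y\in\widetilde{J}$ it suffices by (1) to collapse $(J\cup\{x\},\,U\cup\{x+y\})$ and $(J,\,U\cup\{x+y,x\})$, each of which refines an idealistic prime already known to collapse --- and it treats the comparison $\cP_1\leq\cP''$ with a competing saturated refinement as immediate (your minimality argument is the detail behind that ``easily seen''). What your version buys is fully explicit algebraic certificates at every step; what the paper's version buys is economy and reusability: the same ``(1) as a cut rule'' pattern is precisely what lets the proofs of theorem \ref{ThColSimKrA} and of the distributive-lattice analogues go through without redoing any computation.
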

\begin{proof}{Proof}
The proof of point (1) is Rabinovitch trick. From the two equalities
$u_1+j_1+ax= 0$ and $u_2x^m+j_2= 0$ (with $u_i\in \cM(U)$, $j_i\in 
\gen{J}$,
$a\in R$,
$n\in\N$) we build a third one, $u_3+j_3= 0$ by eliminating $x$:
we get $u_2(u_1+j_1)^m+(-a)^mj_2= 0$, with $u_3= u_2u_1^m$. \\
The point (2) is seen to be a consequence of (1) as follows.
Let
$\cP'= (\gen{J},\gen{J}+\cM(U))$ be the complete \proi generated by 
$\cP$.
Let $\cP''= (I',S')$ be a saturated \proi which refines $\cP$. Let
$\cP_1= (K,S)$ the \proi described in (2).
It is easily seen that we have   $\cP\leq \cP'\leq \cP_1\leq \cP''$. 
Thus
we are left
to check that $\cP_1$ is a saturated \proiz. We shall see that this 
results
from (1)
without having to do any computations.
Let us show for instance that
$K+K\subseteq K$. Let $x$ and $y$ be in  $K$ \cad
such that $(I;x,U)$ and
$(I;y,U)$ both collapse. We have to show that $(I;x+y,U)$ also 
collapses.
For this, by (1), it is enough to show that
$\cP_2= (I,x;x+y,U)$ and $\cP_3= (I;x+y,x,U)$ collapse. For $\cP_3$ it 
is
by hypothesis. If we complete $\cP_2$, we get $y= x+y-x$
in the \moz, hence this is a refinement of $(I;y,U)$ which collapses by
hypothesis.\\
The other verifications are direct, by similar arguments.
\end{proof}

Notice that the saturation of
$(0,1)$ is $(\cN,A^{\times })$ where $\cN$ is the
nilradical of $R$ and $A^{\times }$
the group of units.
\begin{corollary}
\label{corNstformHilbert} {\em (Krull's theorem or formal Hilbert \nst)} 
\\
Let $\cP= (J,U)$ be an \proi in a commutative ring $R$. Compactness 
theorem
implies that \propeq
\begin{itemize}
\item For all $j\in \gen{J}$, $u\in\cM(U)$, we have $u\neq j$.
\item There exists a detachable prime ideal $Q$ such that
$\cP\le  Q$, \cad such that $J\subseteq Q$ and $U\cap Q= \emptyset$.
\item There exists an \homo $\psi$ from $R$ to an entire ring $S$
such that $\psi(J)= 0$ and $0\notin\psi(U)$.
\end{itemize}
Furthermore if the saturation of $\cP$ is $(I,V)$, \tcg
implies that $I$ is the intersection of all prime 
ideals containing $J$  and disjoint from $U$, whereas
 $V$ is the union of all complement of these prime ideals.
\end{corollary}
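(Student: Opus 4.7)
I would prove $(2)\Rightarrow(1)$ and $(2)\Leftrightarrow(3)$ directly, and reserve the compactness theorem for the substantive implication $(1)\Rightarrow(2)$. The first is immediate: if $Q$ is a detachable prime with $J\subseteq Q$ and $U\cap Q=\emptyset$, then $\gen{J}\subseteq Q$ while $\cM(U)\cap Q=\emptyset$ by primality, so no element of $\gen{J}$ can equal any element of $\cM(U)$. For $(2)\Leftrightarrow(3)$ one passes from $Q$ to the canonical projection $R\to R/Q$ (an integral domain since $Q$ is prime); conversely, given $\psi:R\to S$ with $S$ integral, the kernel $\psi^{-1}(0)$ is a detachable prime ideal of $R$ enjoying the required properties.

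The core step is $(1)\Rightarrow(2)$. I would introduce the propositional theory $\cT$ with an atom $X_r$ for each $r\in R$, to be read as ``$r\in Q$''. Its axioms encode that $\{r:X_r\}$ is a detachable prime ideal refining $\cP$: namely $X_0$; $\neg X_1$; the closure rules $X_r\wedge X_s\vda X_{r+s}$ and $X_r\vda X_{rs}$ for every $s\in R$; the primality rule $X_{rs}\vda X_r\vee X_s$; and the atomic axioms $X_j$ for $j\in J$, $\neg X_u$ for $u\in U$. The crucial technical point, verified by induction on formal derivations, is that any propositional proof of $\bot$ from a finite fragment of $\cT$ can be systematically rewritten as an algebraic identity $u=j$ with $j\in\gen{J}$ and $u\in\cM(U)$: disjunctions introduced by the primality axiom are eliminated in Rabinovitch style, exactly as in the proof of Theorem~\ref{ThColSim1}. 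Under this translation, hypothesis (1) is precisely the formal consistency of $\cT$, and the compactness theorem then provides a propositional model whose true atoms form the detachable prime ideal required by (2).

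The ``furthermore'' part reduces to applying $(1)\Leftrightarrow(2)$ to the refined \prois $\cP\,\&\,\{x\in\cP\}$ and $\cP\,\&\,\{x\notin\cP\}$. By Theorem~\ref{ThColSim1}(2), an element $x$ lies in $I$ iff $\cP\,\&\,\{x\notin\cP\}$ collapses, which by the main equivalence means that no detachable prime refining $\cP$ can avoid $x$; hence $x$ belongs to every such prime and $I=\bigcap_Q Q$. Dually, $x\in V$ iff $\cP\,\&\,\{x\in\cP\}$ collapses, iff $x$ lies in no detachable prime refining $\cP$, i.e.\ in the complement of every such $Q$. The main obstacle in this program is not conceptual but notational: setting up the rewriting procedure in the central step so as to witness, explicitly and uniformly, the passage from a propositional derivation of $\bot$ to a collapse identity of $\cP$; the rest of the argument is then bookkeeping.
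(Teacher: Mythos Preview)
Your proposal is correct and follows essentially the same route as the paper, which simply defers to the proof of Theorem~\ref{th.nstformel}. There, as you do, one introduces a propositional theory whose atoms are the assertions $x\in Q$, writes down the prime-ideal axioms together with $J\subseteq Q$ and $U\cap Q=\emptyset$, argues that hypothesis~(1) amounts to formal consistency, and invokes the compactness theorem to produce the model that yields~(2).

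The one place where you and the paper differ in emphasis is the consistency argument. The paper dispatches it in one line, ``By Theorem~\ref{ThColSimKrA} this theory is consistent'': if a finite fragment of the theory derives $\bot$, then for every truth-assignment to the finitely many atoms involved one obtains a refined \proi that collapses, and iterated simultaneous collapse (Fact~\ref{corColSimKrA}, third item) forces the original $\cP$ to collapse. You propose instead to run an induction on derivations, eliminating each primality disjunction Rabinovitch-style. That is exactly the content of Theorem~\ref{ThColSim1}(1) unpacked, so you are re-proving what the paper cites; there is no harm in this, but you can shorten your write-up considerably by invoking the simultaneous collapse theorem directly rather than redoing the induction.

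Your treatment of the ``furthermore'' clause via Theorem~\ref{ThColSim1}(2) is correct and in fact more explicit than the paper, which does not spell it out.
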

\begin{proof}{Proof}
See the proof of theorem \ref{th.nstformel} (which is more general).
\end{proof}
\begin{theorem}
\label{ThColSimKrA}{\em (Simultaneous collapse for the \procsz)}\\
Let $\cC= ((J_0,U_0),\ldots,(J_\ell,U_\ell))$ be an \proc in a 
commutative
ring $R$
\begin{itemize}
\item [$(1)$] Suppose $x\in R$ and $i\in\left\{0,\ldots ,\ell\right\}$.
Suppose that the \procs
$\cC\,\& \left\{x\in \cC^{(i)} \right\} $
and
$\cC\,\& \left\{x\notin \cC^{(i)} \right\} $
both collapse, then so does $\cC$.
\item [$(2)$] The \proc $\cC$ generates a minimum saturated \procz.
We get it by adding in
$U_i$ (resp. $J_i$) all element $x\in R$  such that the \proc
$\cC\,\& \left\{x\in \cC^{(i)} \right\} $
(resp. $\cC\,\& \left\{x\notin \cC^{(i)} \right\} $)
collapses.
\end{itemize}
\end{theorem}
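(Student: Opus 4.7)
My plan is to mirror Theorem~\ref{ThColSim1}: Part~(1) generalises the Rabinovitch trick to the chain setting, and Part~(2) then follows from Part~(1) by the same abstract saturation argument used in the single-prime case.

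For Part~(1), I use the explicit nested expansion of $V_0$ from Fact~\ref{factProcComp}. A collapse of $\cC\,\&\,\{x\in\cC^{(i)}\}$ decomposes, via $\gen{J_i\cup\{x\}}=\gen{J_i}+xR$ and distributivity, as
\[
P+Qx=0, \qquad P\in V_0(\cC),\quad Q=u_0u_1\cdots u_{i-1}\,a\ (a\in R),
\]
while a collapse of $\cC\,\&\,\{x\notin\cC^{(i)}\}$, writing the level-$i$ monoid element of $\cM(U_i\cup\{x\})=\bigcup_{m\geq 0}x^m\cM(U_i)$ as $x^m u'_i$ with $u'_i\in\cM(U_i)$, decomposes as
\[
x^m S+T=0, \qquad S=u'_0 u'_1\cdots u'_i v'_{i+1},\quad T=\sum_{k=0}^{i}u'_0\cdots u'_{k-1}j'_k .
\]
Raising the first to the $m$-th power gives $Q^m x^m=(-1)^m P^m$; substituting into $Q^m$ times the second eliminates $x$ and yields the $x$-free identity
\[
(-1)^m P^m S+Q^m T=0 \qquad(\star).
\]
It remains to verify that the left-hand side of $(\star)$ fits the nested template of Fact~\ref{factProcComp}, i.e.\ lies in $V_0(\cC)$. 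I will use three closure properties of $V_k(\cC)$, proved jointly by induction on $\ell$ from the base $V_\ell=\cM(U_\ell)+\gen{J_\ell}$: stability under multiplication by $\cM(U_k)$, under addition of $\gen{J_k}$, and under self-multiplication. From these, $P^m\in V_0(\cC)$, and $S\in V_0(\cC)$ (using the nesting $V_k=\cM(U_k)V_{k+1}+\gen{J_k}$ and $v'_{i+1}\in V_{i+1}$), whence $P^m S\in V_0(\cC)$; and $Q^m T$, a sum each of whose terms contains a single $j'_k$, regroups level by level into the same nested template. The sign $(-1)^m$ is harmless, since the ideal parts $\gen{J_k}$ are sign-invariant.

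For Part~(2), the chain $\cC_1$ defined in the statement refines $\cC$ by construction. For any saturated refinement $\cC''$ of $\cC$ one checks $\cC_1\leq\cC''$ exactly as in Theorem~\ref{ThColSim1}(2): if $x$ was put in the level-$i$ monoid of $\cC_1$ because $\cC\,\&\,\{x\in\cC^{(i)}\}$ collapses, then by Fact~\ref{factColAc} the refinement $\cC''\,\&\,\{x\in{\cC''}^{(i)}\}$ collapses as well, and the saturation axioms at level~$i$ of $\cC''$ force $x$ into the monoid of ${\cC''}^{(i)}$; the ideal side is symmetric. It then remains to check that $\cC_1$ itself is saturated, axiom by axiom and level by level, using Part~(1) non-computationally. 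For instance, to show $J^{(1)}_i+J^{(1)}_i\subseteq J^{(1)}_i$ given $x,y\in J^{(1)}_i$, apply Part~(1) to the two chains obtained by further refining $\cC'=\cC\,\&\,\{x+y\notin\cC^{(i)}\}$ by $\{x\in{\cC'}^{(i)}\}$ and by $\{x\notin{\cC'}^{(i)}\}$: the second refines $\cC\,\&\,\{x\notin\cC^{(i)}\}$, which collapses by hypothesis; and after passing to the complete chain the first has $y=(x+y)-x$ in its level-$i$ monoid (using Fact~\ref{factProcComp}: $x+y$ is a monoid generator, $-x$ lies in the level-$i$ ideal), hence refines $\cC\,\&\,\{y\notin\cC^{(i)}\}$, which also collapses. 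Part~(1) then gives that $\cC'$ collapses, i.e.\ $x+y\in J^{(1)}_i$.

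The main obstacle is the verification in Part~(1) that $(\star)$ lies in $V_0(\cC)$: while the three closure properties of the $V_k$ are routine, tracking the mixed monoid products $u_k^m u'_k$, the parity sign $(-1)^m$, and the level-by-level regrouping of the terms of $Q^m T$ into the nested $V_0$-template of Fact~\ref{factProcComp} requires careful bookkeeping.
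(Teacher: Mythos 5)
Your proof is correct, but it follows a genuinely different route from the one in the paper. The paper proves part (1) by induction on the length $\ell$, with (2) interleaved in the induction: the base case is Theorem \ref{ThColSim1}; for $i<\ell$ the statement is reduced, via Fact \ref{factSatur}, to a chain of length $i$ in the localisation $R_{U_{i+1}}$; and for $i=\ell$ the truncated chain $((J_0,U_0),\ldots,(J_{\ell-1},U_{\ell-1}))$ is first saturated using $(2)_{\ell-1}$, after which the single-prime Rabinovitch trick is applied in the quotient $R/K_{\ell-1}$ (through the implications $(\alpha)\Rightarrow(\beta)\Rightarrow(\gamma)$). You instead carry out the Rabinovitch elimination directly at level $i$ and check by hand that the eliminated identity is a collapsus of $\cC$; this gives (1) for every $\ell$ at once, with no induction, no appeal to Fact \ref{factSatur}, and with (2) as a genuine corollary of (1) rather than a partner in the induction. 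Your deferred bookkeeping does go through: multiply the identity by $(-1)^m$ so the signs are absorbed by the ideal factors of $T$, and take as nested template the monoid elements $w_k=u_k^m u'_k$; then $P^m$ is itself a nested form with data $(u_k^m,g_k)$, $g_k\in\gen{J_k}$, so $P^mS$ has leading monomial $w_0\cdots w_\ell$ with every remaining term carrying some $g_k$ or $j'_{k'}$ and a coefficient divisible by $w_0\cdots w_{\mu-1}$ for the appropriate level $\mu$; each term of $Q^mT$ at level $k\le i$ has coefficient $(u_0\cdots u_{i-1})^m u'_0\cdots u'_{k-1}$, a multiple of $w_0\cdots w_{k-1}$, hence is absorbed into the level-$k$ ideal slot (cross terms $g_kj'_{k'}$ go to level $\min(k,k')$; the degenerate case $m=0$ is immediate, the second collapsus being already one of $\cC$). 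What each approach buys: yours produces an explicit collapse certificate for $\cC$ from the two given ones (hence explicit exponents and coefficients, in the spirit of the ``machinery producing algebraic identities''), and proves (1) independently of (2); the paper's argument is shorter and avoids all bookkeeping by recycling Theorem \ref{ThColSim1} in quotients and localisations, at the price of the interleaved induction. Your treatment of (2) — minimality via the conjugation axioms of a saturated refinement, saturation of the constructed chain axiom by axiom as in Theorem \ref{ThColSim1}(2) — matches the paper's, which leaves those verifications at the same level of detail.
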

\begin{proof}{Proof}
Let us write $(1)_\ell$ and $(2)_\ell$ the statements for a fixed $\ell$.
Notice that $(1)_0$ and $(2)_0$ are proved in theorem
\ref{ThColSim1}. We shall reason then by induction on $\ell$.
We can suppose that the \proc $\cC$ is complete (since a chain collapses
iff its completion does).\\
The fact that $(1)_\ell\Rightarrow (2)_\ell$ is direct, and can be 
proved by
similar arguments as in the proof of theorem \ref{ThColSim1}. \\
We are left to show
$((1)_{\ell-1}\; {\rm and}\; (2)_{\ell-1})\,\Rightarrow\, (1)_{\ell}$ 
(for
$\ell>0$).\\
If $i<\ell$
we  use  fact \ref{factSatur}: we have then \prolos $i$
in the localisation $A_{U_{i+1}}$ and we can apply the \hdrz.\\
If $i= \ell$ we consider the \prolo $\ell-1$
$((K_0,S_0),\alb\ldots,\alb(K_{\ell-1},S_{\ell-1}))$ that we get by 
saturating
$((J_0,U_0),\ldots,(J_{\ell-1},U_{\ell-1}))$ (we use $(2)_{\ell-1}$).
For arbitrary $j_i\in \gen{J_i}$ and
$u_i\in\cM(U_i)$  ($0\le i\le\ell$),
let us consider the following assertions:
$$ u_0\cdot(u_1\cdot(\cdots(u_{\ell-1}\cdot (u_\ell+j_\ell)+ j_{\ell-1})
+\cdots)+j_1)+j_0= 0\qquad (\alpha)
$$
$$ (u_\ell+j_\ell)\in K_{\ell-1}\qquad (\beta)
$$
$$ \exists n\in\N\;\; u_0\cdot(u_1\cdot(\cdots(u_{\ell-1}\cdot
(u_\ell+j_\ell)^n+ j_{\ell-1}) +\cdots)+j_1)+j_0= 0\qquad (\gamma)
$$
We have $(\alpha) \Rightarrow (\beta) \Rightarrow (\gamma)$.
Hence the following properties are equivalent. Primo: the \proc
$\cC$ collapses in $R$ (this is certified by an equality of type 
$(\alpha)$
or $(\gamma)$). Secundo: the \proi $(J_\ell,U_\ell)$ collapses in
$R/K_{\ell-1}$ (which is certified by an equality of type $(\beta)$). We
are then
reduced to the case $(1)_0$ in the ring  $R/K_{\ell-1}$, and this case 
has
been
dealt with in theorem \ref{ThColSim1}.
\end{proof}
Notice that we have made no use in the end of this argument of
fact \ref{factSatur}, which is not powerful enough in this situation.
The following facts are simple corollaries of theorem
\ref{ThColSimKrA}. Notice that the second point allows an improved
utilisation of fact  \ref{factSatur}.
\begin{fact}
\label{corColSimKrA}~
\begin{itemize}
\item  An \proc $\cC$  collapses \ssi any saturated \proc which refines
$\cC$ is trivial.
\item One does not change the collapsus of an \proc $\cC$ if we replace 
a
subchain by its saturation.
\item  Suppose $x_1,\ldots,x_k\in R$ and that the \procs
$((J_0,U_0),\ldots,(J_i\cup\{(x_h)_{h\in H}\},
U_i\cup\{(x_h)_{h\in H'}\}), \ldots,(J_\ell,U_\ell))$
collapse for all complement pair $(H,H')$ of $\{1,\ldots,k\}$,
then $\cC$ collapses.
\end{itemize}
\end{fact}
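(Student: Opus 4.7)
All three bullets are corollaries of theorem~\ref{ThColSimKrA}, and I plan to prove them in the order (3), (2), (1).

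For bullet (3), I induct on $k$. Given $(H,H')$ with $H\cup H'=\{1,\dots,k-1\}$, the two complement pairs $(H\cup\{k\},H')$ and $(H,H'\cup\{k\})$ of $\{1,\dots,k\}$ yield collapsing chains by the current hypothesis, so theorem~\ref{ThColSimKrA}(1), applied to $x_k$ at position $i$, gives collapse of the chain associated with $(H,H')$. After $k$ such reductions, the hypothesis is exactly that $\cC$ collapses.

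For bullet (2), write $\cC=\cC_{\mathrm{pre}}\bullet\cC_{\mathrm{sub}}\bullet\cC_{\mathrm{suf}}$ and let $\cC'=\cC_{\mathrm{pre}}\bullet\widetilde\cC_{\mathrm{sub}}\bullet\cC_{\mathrm{suf}}$, where $\widetilde\cC_{\mathrm{sub}}$ is the saturation of the middle subchain. The forward direction is immediate by fact~\ref{factColAc}, since $\cC'$ refines $\cC$. For the converse, a collapsus of $\cC'$ involves only finitely many elements $y_1,\dots,y_m$ adjoined by the saturation process, so I reduce to the case where only these finitely many have been added and remove them one at a time. To peel off an adjoined element $y$ (say added to $J_k$ of $\cC_{\mathrm{sub}}$) from a collapsing chain $\cD$ that still contains it: theorem~\ref{ThColSimKrA}(2) applied to $\cC_{\mathrm{sub}}$ asserts that $\cC_{\mathrm{sub}}\,\&\,\{y\notin\cC_{\mathrm{sub}}^{(k-i)}\}$ collapses; by fact~\ref{factColAc} this lifts to $\cC\,\&\,\{y\notin\cC^{(k)}\}$, and hence to any refinement of that chain, in particular to the chain obtained from $\cD$ by replacing ``$y\in\cC^{(k)}$'' with ``$y\notin\cC^{(k)}$''. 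By theorem~\ref{ThColSimKrA}(1) the chain obtained from $\cD$ by removing $y$ altogether then collapses. Iterating returns to $\cC$.

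Bullet (1) follows at once: if $\cC$ collapses, every refinement collapses (fact~\ref{factColAc}) and a saturated collapsing chain is trivial by definition; conversely, if every saturated refinement of $\cC$ is trivial, so is the minimum saturated refinement $\widetilde\cC$ furnished by theorem~\ref{ThColSimKrA}(2), this chain collapses immediately since $0\in A$, and bullet (2), applied with the whole chain as subchain, transports the collapse back to $\cC$. The main obstacle is the converse direction of (2), where one must organise the removal of many saturation elements at once. The crucial leverage is fact~\ref{factColAc}, which transports the subchain-level collapses guaranteed by theorem~\ref{ThColSimKrA}(2) to the ambient chain, thereby supplying the companion refinements needed to invoke theorem~\ref{ThColSimKrA}(1) one element at a time.
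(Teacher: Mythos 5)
Your proof is correct and follows exactly the route the paper intends: the paper states these three facts without proof as ``simple corollaries'' of theorem~\ref{ThColSimKrA}, and your argument derives each of them from that theorem together with fact~\ref{factColAc} (induction over complementary pairs for the third bullet, finite reduction and element-by-element peeling for the second, and the minimum saturated refinement for the first). Nothing essential is missing; you have merely written out the details the authors leave to the reader.
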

\begin{definition}
\label{defproch3} ~
\begin{itemize}
\item  Two \procs that generate the same saturated \proc are said to be 
{\em
equivalent}.
\item An \proc of {\em finite type} is an \proc
  which is equivalent to a finite \procz.
\item An \proc is {\em strict} iff
$V_i\cap I_{i+1}\neq \emptyset$  $(i= 0,\ldots,\ell-1)$ in the 
corresponding
generated saturated \procz.
\item  A saturated \proc $\cC= ((J_0,U_0),\ldots,(J_\ell,U_\ell))$ is
  {\em frozen} if it does not collapse and if for all
$i= 0,\ldots,\ell$, $J_i\cup U_i= R$. An \proc is frozen iff its 
saturation is
frozen. To give a frozen \proc is equivalent to give an increasing chain
of detachable prime ideals.
\end{itemize}
\end{definition}

 We think that theorem
\ref{ThColSimKrA} reveals a computational content which is ``hidden''
in usual classical proofs about increasing chains of prime ideals.
We illustrate this point in the following theorem, which, in
classical terms, gives a concrete characterisation of \procs which
are incomplete specifications of increasing chains of prime ideals.

\begin{theorem}
\label{th.nstformel} {\em (formal \nst for chain of prime ideals)}
Let $R$ be a ring and
$((J_0,U_0),\ldots,(J_\ell,U_\ell))$ an \proc in $R$. The \tcg implies
that the following are equivalent:
\begin{itemize}
\item [$(a)$] There exist $\ell+1$ detachable prime ideals
$P_0\subseteq \cdots\subseteq P_\ell$  such that
 $J_i\subseteq P_i$, $U_i\cap P_i= \emptyset $, $(i= 0,\ldots,\ell)$.
\item [$(b)$] For all $j_i\in \gen{J_i }$ and $u_i\in\cM(U_i)$,
$(i= 0,\ldots,\ell)$
$$u_0\cdot(u_1\cdot(\cdots(u_\ell+j_\ell)+\cdots)+j_1)+j_0\neq 0.
$$
\end{itemize}
\end{theorem}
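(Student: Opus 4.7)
The plan is to imitate and extend the proof of Corollary \ref{corNstformHilbert}, which is the case $\ell = 0$.

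For (a) $\Rightarrow$ (b), I would peel off the nested expression layer by layer using primality. Given $P_0 \subseteq \cdots \subseteq P_\ell$ satisfying (a), suppose for contradiction a collapse identity
\[
u_0\cdot(u_1\cdot(\cdots(u_\ell+j_\ell)+\cdots)+j_1)+j_0 = 0
\]
holds with $j_i \in \gen{J_i}$ and $u_i \in \cM(U_i)$. Since $J_i \subseteq P_i$ and $P_i$ is an ideal, $j_i \in P_i$; since $U_i \cap P_i = \emptyset$ and $P_i$ is prime, $\cM(U_i) \cap P_i = \emptyset$, so $u_i \notin P_i$. Writing $X_0$ for the innermost bracket down from the outside, $u_0 X_0 = -j_0 \in P_0$ forces $X_0 \in P_0 \subseteq P_1$; then $u_1 X_1 = X_0 - j_1 \in P_1$ forces $X_1 \in P_1 \subseteq P_2$; iterating, one finally obtains $u_\ell + j_\ell \in P_\ell$, whence $u_\ell \in P_\ell$, contradicting $u_\ell \notin P_\ell$.

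For (b) $\Rightarrow$ (a), I would apply the \tcg to the propositional theory $T$ whose variables are symbols $X_{i,x}$ ($0 \leq i \leq \ell$, $x \in R$) interpreted as ``$x \in P_i$''. Its axioms encode that each $P_i$ is a proper prime ideal (using the known ring operations on $R$), that $P_i \subseteq P_{i+1}$, and the constraints $X_{i,j}$ for $j \in J_i$ and $\neg X_{i,u}$ for $u \in U_i$. A model of $T$ produces directly the $\ell+1$ detachable prime ideals required in (a), so it suffices to show formal consistency of $T$. By (b) and Definition \ref{defproch2}, the chain $\cC$ does not collapse; to satisfy any finite fragment of $T$ I would walk through the finitely many elements of $R$ it mentions and, for each pair $(i,x)$ in turn, apply Theorem \ref{ThColSimKrA}(1): at least one of the refinements $\cC\,\&\,\{x\in \cC^{(i)}\}$ or $\cC\,\&\,\{x\notin \cC^{(i)}\}$ does not collapse, for otherwise $\cC$ would collapse. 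The resulting finite refinement decides every variable of the fragment consistently and thus yields the required assignment.

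The main technical point, and the chief obstacle, is the dictionary between collapse of finite refinements of $\cC$ on the one hand and refutability in $T$ of the corresponding finite fragment on the other: once set up, Theorem \ref{ThColSimKrA}(1) is exactly what preserves consistency under each new decision, and the \tcg then glues the local decisions into a global model, producing the chain $P_0 \subseteq \cdots \subseteq P_\ell$ of detachable prime ideals. This is the same schema as in Corollary \ref{corNstformHilbert}, with ``idealistic prime'' replaced by ``idealistic chain'' throughout.
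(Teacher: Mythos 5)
Your proposal is correct and follows essentially the same route as the paper: the $(a)\Rightarrow(b)$ direction (which the paper dismisses as ``direct'') is the layer-by-layer primality argument you give, and for $(b)\Rightarrow(a)$ the paper likewise writes down the propositional theory of an increasing chain of proper prime ideals with the constraints from the $J_i$ and $U_i$, invokes theorem \ref{ThColSimKrA} for its consistency, and concludes by the \tcgz. The step you flag as the main technical point (the dictionary between collapse of finite refinements and refutability of finite fragments, with \ref{ThColSimKrA}(1) preserving non-collapse at each decision) is exactly what the paper's one-line appeal to theorem \ref{ThColSimKrA} silently relies on.
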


\begin{proof}{Proof}
Only $(b)\Rightarrow (a)$ is not direct.
Let us start by a proof that uses not the \tcg but \pte and Zorn's 
lemma.
We consider an \proc
$\cC_1= ((P_0,S_0),\ldots,(P_\ell,S_\ell))$
maximal (w.r.t. the refinement relation) among all \proc which refine
$\cC$ and that are not collapsing. It is first clear that $\cC_1$ is
complete, since collapsus is not changed by completion.
If this was not a chain of prime ideals with complement, we would
have for some index $i\;:$
$S_i\cup P_i\neq R$.
In this case let $x\in A\setminus (S_i\cup P_i)$.
Then  $((P_0,S_0),\ldots,(P_i\cup\{x\},S_i),\ldots,(P_\ell,S_\ell))$
has to collapse (by maximality).
The same holds for
$((P_0,S_0),\ldots,(P_i,S_i\cup\{x\}),\ldots,(P_\ell,S_\ell))$.
By theorem \ref{ThColSimKrA} the \proc
$((P_0,S_0),\ldots,(P_\ell,S_\ell))$ collapses, which is absurd.\\
Let us present next a proof which uses only the \tcgz, hence with a
restricted use
of \ptez.\\
We consider the syntactical  propositional theory which describes
an increasing chain of prime ideals of length $\ell$ in $R$.
In this theory we have atomic proposition for $x\in P_i$
and the axioms state that each $P_i$ defines a  proper prime ideal,
that is
\begin{itemize}
\item $\neg (1\in P_i)$
\item $0\in P_i$
\item $a\in P_i\wedge b\in P_i\rightarrow (a+b)\in P_i$
\item $a\in P_i\rightarrow ab\in P_i$
\item $ab\in P_i\rightarrow (a\in P_i\vee b\in P_i)$
\end{itemize}
and we have furthermore $a\in P_i\rightarrow a\in P_{i+1}$ and
$a\in P_i$ for $a\in J_i$ and $\neg (b\in P_i)$ for $b\in U_i$.
By theorem \ref{ThColSimKrA} this theory is consistent.
By the \tcgz, it has a model. This model gives us $\ell+1$
prime ideals as desired.
\end{proof}

 Notice also that theorem \ref{th.nstformel} implies (in two lines)
the simultaneous collapsus theorem \ref{ThColSimKrA}. This last
result can thus be considered to be the \cov version of the first.
A possible corollary of theorem \ref{th.nstformel} would be a
characterisation of the saturated \proc generated by an \proc $\cC$
via the family of chains of prime ideals that are refinements of $\cC$
like in the last claim of theorem \ref{ThColSim1}.
\subsection{Pseudo regular sequences and Krull dimension}
\label{subsecPsr}
\label{nbpIneq}In a constructive framework, it is sometimes better to
consider an inequality relation $x\neq 0$ which is not simply the 
negation
of $x= 0$. For instance a real number is said to be $\neq 0$ iff it is
invertible, \cad apart from $0$. Whenever we mention an inequality 
relation
$x\neq 0$ in a ring, we always suppose implicitly that this relation
has been defined first in the ring. We require that this relation is
a standard apartness relation, that is we ask that, modulo the use
of the excluded middle, it can be shown to be equivalent to 
$\lnot(x= 0)$.
We ask also the conditions
$\; (x\neq 0,\; y= 0)\; \Rightarrow\;  x+y\neq 0$,
$\; xy\neq 0\Rightarrow \; x\neq 0,$ and
$\lnot(0\neq 0)$. Finally  $x\neq y$ is defined as  $x-y\neq 0$.
Without any further precisions on $x\neq 0$ one can always consider
that it is $\lnot(x= 0)$. When the ring is a discrete set, that is when
there is an equality test, we always chose the inequality 
$\lnot(x= 0)$.
Nevertheless it would be a misguided conception to believe that
algebra should limit itself to discrete sets.
\begin{definition}
\label{defproch}
Let  $(x_1,\ldots,x_\ell)$ be a sequence of length $\ell$ in a 
commutative
ring $R$.
\begin{itemize}
\item  The \proc $((0,x_1),\alb(x_1,x_2),\alb\ldots,\alb
(x_{\ell-1},x_\ell),(x_\ell,1))$ is said to be an {\em \proelz}.
It is said to be {\em associate to the sequence $(x_1,\ldots,x_\ell)$}.
We write it $\overline{(x_1,\ldots,x_\ell)}$.
\item
The sequence
$(x_1,\ldots,x_\ell)$ is said to be {\em pseudo singular} when the 
associate
\proel $\overline{(x_1,\ldots,x_\ell)}$ collapses.
This means that there exist
$a_1,\ldots,a_\ell\in R$ and $m_1,\ldots,m_\ell\in \N$ such that
$$ x_1^{m_1}(x_2^{m_2}\cdots(x_\ell^{m_\ell} (1+a_\ell x_\ell) +
\cdots+a_2x_2) + a_1x_1) =   0
$$
\item   The sequence
$(x_1,\ldots,x_\ell)$ is {\em pseudo regular} when the corresponding
\proel does not collapse. This means that for all
$a_1,\ldots,a_\ell\in R$ and all $m_1,\ldots,m_\ell\in \N$ we have
$$ x_1^{m_1}(x_2^{m_2}\cdots(x_\ell^{m_\ell} (1+a_\ell x_\ell) +
\cdots+a_2x_2) + a_1x_1) \neq  0
$$
\end{itemize}
\end{definition}
Notice that the length of the \proel associate to a sequence is the same 
as
the length of this sequence.

The connection with the usual notion  of regular sequence is given by
the following proposition, which is direct.

\begin{proposition}
\label{prop.regseq} In a commutative ring $R$ any regular sequence is
pseudo regular.
\end{proposition}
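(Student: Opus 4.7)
My plan is to argue by induction on the length $\ell$, using two features of a regular sequence: $x_1$ is a non-zero divisor in $R$, and $(\bar x_2,\ldots,\bar x_\ell)$ is again regular in $R/\langle x_1\rangle$ (together with properness of the generated ideal, which is part of the standard definition). What I must rule out is the existence of $a_i\in R$ and $m_i\in\N$ realising a collapsus
$$x_1^{m_1}(x_2^{m_2}\cdots(x_\ell^{m_\ell}(1+a_\ell x_\ell)+\cdots+a_2 x_2)+a_1 x_1)=0.$$

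For the base case $\ell=1$, a collapsus reads $x_1^{m_1}(1+a_1 x_1)=0$. Since $x_1$ is a non-zero divisor, so is $x_1^{m_1}$ (and the case $m_1=0$ is immediate), so I obtain $1+a_1 x_1=0$, hence $1\in\langle x_1\rangle$, contradicting the properness of $\langle x_1\rangle$ inherent in the notion of a regular sequence.

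For the inductive step, I factor the collapsus as $x_1^{m_1}(A+a_1 x_1)=0$, where $A=x_2^{m_2}(\cdots)$ denotes the tail of the expression. Cancelling $x_1^{m_1}$ via its non-zero-divisor property gives $A+a_1 x_1=0$, hence $A\in\langle x_1\rangle$. Passing to $\bar R=R/\langle x_1\rangle$, I obtain $\bar A=0$, which is exactly a collapsus of the \proel associated to the shorter sequence $(\bar x_2,\ldots,\bar x_\ell)$, with exponents $m_2,\ldots,m_\ell$ and coefficients $\bar a_2,\ldots,\bar a_\ell$. Since $(\bar x_2,\ldots,\bar x_\ell)$ is itself regular in $\bar R$ by definition of regularity for $(x_1,\ldots,x_\ell)$, the induction hypothesis rules this out, yielding the contradiction.

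I do not expect any real obstacle: the recursive shape of the collapsus formula mirrors exactly the inductive definition of regularity via non-zero-divisor conditions in the successive quotients $R/\langle x_1,\ldots,x_{i-1}\rangle$, so the induction runs essentially by inspection. The only minor points I will have to check are the edge case $m_1=0$ (where cancellation is vacuous but the conclusion $A+a_1 x_1=0$ still holds) and that the implication $A\in\langle x_1\rangle\Rightarrow\bar A=0$ genuinely produces a well-formed collapsus of the length-$(\ell-1)$ \proc in $\bar R$ with the prescribed data.
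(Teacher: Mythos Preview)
Your argument is correct and is precisely the natural direct verification the paper has in mind (the paper itself omits the proof, noting only that it is ``direct''). The inductive reduction via cancelling the non-zero-divisor $x_1^{m_1}$ and passing to $R/\gen{x_1}$ is exactly what the recursive shape of the collapsus formula calls for, and your handling of the base case and of the properness condition is fine.
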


The following lemma is sometimes usefull.

\begin{lemma}
\label{lem.pseudoreg}
Let  $(x_1,\ldots,x_\ell)$ and $(y_1,\ldots,y_\ell)$ be two sequences in
a commutative ring $R$.
Suppose that for each $j$, $x_j$ divides a power of $y_j$
and that $y_j$ divides a power of $x_j$.
The sequence  $(x_1,\ldots,x_\ell)$ is then pseudo singular \ssi
the sequence  $(y_1,\ldots,y_\ell)$ is pseudo singular.
\end{lemma}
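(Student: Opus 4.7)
Let $\cC_x = \overline{(x_1,\ldots,x_\ell)}$ and $\cC_y = \overline{(y_1,\ldots,y_\ell)}$, and let $\widetilde{\cC_x}, \widetilde{\cC_y}$ denote their minimum saturated refinements, which exist by Theorem \ref{ThColSimKrA}(2). My plan is to establish $\widetilde{\cC_x} = \widetilde{\cC_y}$; then Fact \ref{corColSimKrA} (saturating a subchain does not change the collapsus) gives that $\cC_x$ collapses iff $\widetilde{\cC_x}$ collapses iff $\widetilde{\cC_y}$ collapses iff $\cC_y$ collapses, which is exactly the desired equivalence of pseudo singularity.

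To prove $\widetilde{\cC_x} = \widetilde{\cC_y}$, I use the minimality clause of Theorem \ref{ThColSimKrA}(2): it suffices to check that $\cC_y \le \widetilde{\cC_x}$, for then $\widetilde{\cC_y} \le \widetilde{\cC_x}$, and the symmetric argument (exchanging the roles of $x$ and $y$) yields the reverse inequality. Writing $\widetilde{\cC_x} = ((\widetilde J_0, \widetilde U_0),\ldots,(\widetilde J_\ell, \widetilde U_\ell))$, and recalling that $\cC_y$ has ideal-side data $\{0\},\{y_1\},\ldots,\{y_\ell\}$ and monoid-side data $\{y_1\},\ldots,\{y_\ell\},\{1\}$, the verification reduces to showing, for each $j \in \{1,\ldots,\ell\}$, that $y_j \in \widetilde J_j$ and $y_j \in \widetilde U_{j-1}$; the boundary inclusions $0 \in \widetilde J_0$ and $1 \in \widetilde U_\ell$ are automatic.

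Fix such a $j$ and use the hypothesis to write $y_j^{n_j} = b_j x_j$ and $x_j^{m_j'} = c_j y_j$ for some $b_j,c_j \in R$. The saturation conditions of Definition \ref{defConjug} tell us that $\widetilde J_j$ is radical ($a^n \in \widetilde J_j \Rightarrow a \in \widetilde J_j$) and that $\widetilde U_{j-1}$ is divisor-closed ($s_1 s_2 \in \widetilde U_{j-1} \Rightarrow s_1 \in \widetilde U_{j-1}$). Since $x_j \in \widetilde J_j$, we have $y_j^{n_j} = b_j x_j \in \widetilde J_j$, so $y_j \in \widetilde J_j$ by radicality. Dually, $x_j \in \widetilde U_{j-1}$ and this set is multiplicatively closed, so $x_j^{m_j'} = c_j y_j \in \widetilde U_{j-1}$, and divisor closure forces $y_j \in \widetilde U_{j-1}$. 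I do not foresee any serious obstacle: the only care required is the bookkeeping that each $x_j$ plays a dual role (ideal generator at level $j$, monoid generator at level $j-1$), so the two divisibility hypotheses are used exactly once each.
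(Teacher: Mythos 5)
Your proof is correct and is essentially the paper's own argument: both establish that $\overline{(x_1,\ldots,x_\ell)}$ and $\overline{(y_1,\ldots,y_\ell)}$ generate the same saturated chain, using $y_j\mid x_j^{k}$ together with divisor-closure of the saturated monoid components to place $y_j$ at level $j-1$, and $x_j\mid y_j^{m}$ together with radicality of the saturated ideal components to place $y_j$ at level $j$, then concluding by symmetry. The only difference is presentational (the paper sandwiches the enlarged pair $(a;x,y)(x,y;b)$ between the chain and its saturation, while you invoke minimality of the saturation directly), so there is nothing to add.
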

\begin{proof}{Proof}
Indeed, if $x$ divides a power of $y$ and $y$ divides a power of $x$
we have the following refinement relations
$$(a;x)(x;b)  \leq  (a;x,y)(x,y;b) \leq
         {\rm \;the\; saturation\; of\; the\; sequence\; }   
(a;x)(x;b)$$
one add the first $y$ by the relation $yc= x^k$
  ($y$ is hence in the saturation of the \mo generated by $x$),
one add the second by the relation     $y^m= dx$
  ($y$  is hence in the radical of the ideal generated by $x$).
We deduce by symmetry that  $(a;x)(x;b)$ and $(a;y)(y;b)$ have the same
saturation.
\end{proof}

An immediate corollary of theorem \ref{th.nstformel} is the following
theorem  \ref{th.pseudoreg}.

\begin{theorem}
\label{th.pseudoreg}
{\em (pseudo regular sequences and increasing chain of prime ideals)} 
\Tcgi
In a ring $R$ a sequence $(x_1,\ldots,x_\ell)$ is pseudo regular
\ssi there exist  $\ell+1$ prime ideals
$P_0\subseteq \cdots\subseteq P_\ell$
with $x_1\in P_1\setminus P_0$, $x_2\in P_2\setminus P_1, \ldots$
$x_\ell\in P_\ell\setminus P_{\ell-1}$.
\end{theorem}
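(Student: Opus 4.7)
The plan is to derive the theorem as a direct specialization of theorem \ref{th.nstformel} to the elementary idealistic chain $\overline{(x_1,\ldots,x_\ell)} = ((0,x_1),(x_1,x_2),\ldots,(x_{\ell-1},x_\ell),(x_\ell,1))$. Since \emph{pseudo regular} was \emph{defined} as ``this chain does not collapse'', half the work is already done: we only need to read off what condition $(a)$ of theorem \ref{th.nstformel} says for this particular chain.

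First, I would unfold notation. With the indexing $\cC^{(i)}=(J_i,U_i)$ where $J_0=\{0\}$, $U_0=\{x_1\}$, $J_i=\{x_i\}$, $U_i=\{x_{i+1}\}$ for $1\le i\le \ell-1$, and $J_\ell=\{x_\ell\}$, $U_\ell=\{1\}$, condition $(b)$ of theorem \ref{th.nstformel} is literally the non-collapsing condition of definition \ref{defproch}, so the hypothesis that $(x_1,\ldots,x_\ell)$ is pseudo regular coincides with condition $(b)$.

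Next, condition $(a)$ of theorem \ref{th.nstformel} produces prime ideals $P_0\subseteq\cdots\subseteq P_\ell$ with $J_i\subseteq P_i$ and $U_i\cap P_i=\emptyset$. Reading this in our setup: $J_0\subseteq P_0$ is automatic; $U_0\cap P_0=\emptyset$ says $x_1\notin P_0$; for $1\le i\le\ell-1$, $J_i\subseteq P_i$ says $x_i\in P_i$ and $U_i\cap P_i=\emptyset$ says $x_{i+1}\notin P_i$; finally $J_\ell\subseteq P_\ell$ says $x_\ell\in P_\ell$ and $U_\ell\cap P_\ell=\emptyset$ is automatic since $P_\ell$ is a proper (prime) ideal. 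Grouping the conditions per index $i\in\{1,\ldots,\ell\}$, we have $x_i\in P_i$ (from $J_i\subseteq P_i$) together with $x_i\notin P_{i-1}$ (from $U_{i-1}\cap P_{i-1}=\emptyset$), i.e.\ $x_i\in P_i\setminus P_{i-1}$, which is exactly the statement claimed. Conversely, any chain of primes with $x_i\in P_i\setminus P_{i-1}$ obviously satisfies condition $(a)$ for the elementary chain, so the equivalence is complete.

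No serious obstacle is anticipated: the only content beyond bookkeeping is the invocation of theorem \ref{th.nstformel}, which itself rests on the simultaneous collapse theorem \ref{ThColSimKrA} together with the \tcg (used, as in the proof of theorem \ref{th.nstformel}, to pass from the consistency of the propositional theory of a chain of primes refining $\cC$ to the actual existence of such a chain). So the write-up can be reduced to a one-line proof that explicitly identifies the $J_i$'s and $U_i$'s and cites theorem \ref{th.nstformel}.
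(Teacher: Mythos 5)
Your proposal is correct and is exactly the paper's intended argument: the paper states theorem \ref{th.pseudoreg} as an immediate corollary of theorem \ref{th.nstformel}, obtained by specializing it to the \proel $\overline{(x_1,\ldots,x_\ell)}$, which is precisely the bookkeeping you carry out. The only point worth noting is that condition $(a)$ of theorem \ref{th.nstformel} speaks of \emph{detachable} prime ideals while the statement here says only ``prime ideals'', a harmless discrepancy since the easy direction needs no detachability and the hard direction produces detachable ones.
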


This leads to the following definition, which gives an explicit
constructive content to the notion of {\em Krull dimension of a ring.}
\begin{definition}
\label{def.dimKrull}~{\em (Krull dimension of a ring)}
\begin{itemize}
\item  A ring $R$ is {\em of dimension $-1$} if, and only if $1= _A0$.  
It is
of dimension $\geq 0$ \ssi  $1\neq_A0$, $>-1$ \ssi  $\lnot(1= _A0)$ and 
$<0$
\ssi  $\lnot(1\neq_A0)$.
\end{itemize}
Let us now suppose $\ell\geq 1$.
\begin{itemize}
\item  A ring is {\em of dimension $\leq \ell-1$} \ssi all \proelos
$\ell$ collapse.
\item  A ring is {\em of dimension $\geq \ell$} \ssi there exists a
pseudo regular sequence de longueur $\ell$.
\item  A ring is {\em of dimension $\ell$} \ssi it is both of
dimension $\ge\ell$ and $\le\ell$.
\item  A ring is {\em of dimension $<\ell$ } \ssi it is impossible for
it to be of dimension $\geq \ell$.
\item  A ring is {\em of dimension $>\ell$ } \ssi it is impossible for
it to be of dimension $\leq \ell$.
{\rm (}{\footnote{~Actually, there exists one and only one \proel of 
length
$0$: $(0,1)$, hence there was no need to begin with a particular 
definition
of ring of dimension $-1$.
In this framework, we recover the distinction between being of dimension
$\ge 0$ and
of dimension $>-1$, as well as the distinction between being of 
dimension
$\le -1$
and dimension $<0$.}}{\rm)}.
\end{itemize}
\end{definition}


A ring is thus of (Krull) dimension $\leq \ell-1$ if for all sequence
$(x_1,\ldots,x_\ell)$ in $R$, one can find $a_1,\ldots,a_\ell\in R$
and  $m_1,\ldots,m_\ell\in \N$ such that
$$ x_1^{m_1}(\cdots(x_\ell^{m_\ell}(1+a_\ell x_\ell)+\cdots)+a_1x_1)=   
0
$$

In particular a ring is of dimension $\leq 0$ \ssi for all $x\in R$
there exists $n\in\N$ and $a\in R$ such that $x^n= ax^{n+1}$. And it is 
of
dimension $<1$ \ssi it is absurd to find $x\in R$ such that, for all
 $n\in\N$ and all $a\in R$, $x^n\neq ax^{n+1}$.

 Notice that the ring of real numbers is a local ring of dimension
 $<1$, but it cannot be proved constructively to be of dimension $\leq 
0$.

Notice also that a local ring  is of dimension $\leq 0$  \ssi
$$\forall x\in A\; \; \; x \; \; {\rm is\; invertible\; or\;
nilpotent}
$$
\subsubsection*{Krull dimension of a polynomial ring over a discrete 
field}
First we have.
\begin{proposition}
\label{propKrDimetDegTr}
Let $K$ be a discrete field, $R$ a commutative $K$-algebra, and $x_1$,
\ldots, $x_\ell$ in $R$ algebraically dependent over $K$. The sequence
$(x_1,\ldots,x_\ell)$ is pseudo singular.
\end{proposition}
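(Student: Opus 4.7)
The plan is to produce an explicit collapsus of the elementary idealistic chain $\overline{(x_1,\ldots,x_\ell)}$ directly from an algebraic relation. Let $P\in K[X_1,\ldots,X_\ell]$ be a nonzero polynomial with $P(x_1,\ldots,x_\ell)=0$ in $R$. Because $K$ is a discrete field, the support of $P$ is a finite decidable subset of $\N^\ell$, so we can effectively locate its minimum $m=(m_1,\ldots,m_\ell)$ for the lexicographic order associated with $X_1>X_2>\cdots>X_\ell$. After dividing $P$ by its (nonzero) coefficient at $m$, we may assume
\[
P \;=\; X_1^{m_1}X_2^{m_2}\cdots X_\ell^{m_\ell}\;+\;\sum_{n>_{\mathrm{lex}} m} c_n\,X_1^{n_1}X_2^{n_2}\cdots X_\ell^{n_\ell},\qquad c_n\in K.
\]

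The combinatorial key is that for every multi-index $n>_{\mathrm{lex}} m$ there is a unique $i(n)\in\{1,\ldots,\ell\}$ with $n_j=m_j$ for $j<i(n)$ and $n_{i(n)}>m_{i(n)}$; in particular the monomial $X^n$ is then divisible by $X_1^{m_1}\cdots X_{i-1}^{m_{i-1}}X_i^{m_i+1}$ with explicit quotient $X_i^{n_i-m_i-1}X_{i+1}^{n_{i+1}}\cdots X_\ell^{n_\ell}$. Regrouping the sum by the value of $i(n)$ and setting
\[
a_i \;=\; \sum_{n\,:\,i(n)=i} c_n\, x_i^{n_i-m_i-1}\,x_{i+1}^{n_{i+1}}\cdots x_\ell^{n_\ell}\;\in\;R,
\]
the relation $P(x_1,\ldots,x_\ell)=0$ rewrites as
\[
x_1^{m_1}x_2^{m_2}\cdots x_\ell^{m_\ell}\;+\;\sum_{i=1}^{\ell} a_i\,x_1^{m_1}\cdots x_{i-1}^{m_{i-1}}\,x_i^{m_i+1}\;=\;0.
\]
A short induction on $\ell$ (by distributing the outer $x_1^{m_1}$ through the inner bracket) identifies the left-hand side with the nested product $x_1^{m_1}\bigl(x_2^{m_2}\bigl(\cdots(x_\ell^{m_\ell}(1+a_\ell x_\ell)+a_{\ell-1}x_{\ell-1})\cdots\bigr)+a_1 x_1\bigr)$, which is precisely a collapsus of $\overline{(x_1,\ldots,x_\ell)}$ in the sense of Definition~\ref{defproch}. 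This shows $(x_1,\ldots,x_\ell)$ is pseudo singular.

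The only real obstacle is that final bookkeeping step matching the expanded monomial sum with the nested product of the definition; it is a routine induction whose base case is $x_1^{m_1}(1+a_1 x_1)=x_1^{m_1}+a_1 x_1^{m_1+1}$. The discrete-field hypothesis enters at a single spot: it is what lets us enumerate the support of $P$, select its lex-minimum multi-index, and invert the coefficient at that position to normalize it to $1$.
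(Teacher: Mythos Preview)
Your proof is correct and follows essentially the same approach as the paper: pick the lex-minimum monomial of the dependence relation, normalize its coefficient to $1$, and regroup the remaining monomials according to the first index where their exponent exceeds $m_i$. The paper's $R_i$ are your $a_i$, and where the paper simply asserts that $Q$ ``can be written on the form'' of a collapsus, you spell out the regrouping via $i(n)$ and the matching of the expanded sum with the nested product --- so your write-up is in fact a bit more explicit than the original.
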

\begin{proof}{Proof}
Let $Q(x_1,\ldots,x_\ell)= 0$ be a algebraic dependence relation
over $K$. Let us order the non zero monomials of $Q$ along
the lexicographic ordering. We can suppose that the coefficient of
the first monomial is
$1$. Let
$x_1^{m_1}x_2^{m_2}\cdots x_\ell^{m_\ell}$ be this momial,
it is clear that
$Q$ can be written on the form
$$ Q= x_1^{m_1}\cdots x_\ell^{m_\ell}+
x_1^{m_1}\cdots x_\ell^{1+m_\ell}R_\ell+
x_1^{m_1}\cdots x_{\ell-1}^{1+m_{\ell-1}}R_{\ell-1}+\cdots+
x_1^{m_1}x_2^{1+m_2}R_2+ x_1^{1+m_1}R_1
$$
and this is the desired collapsus.
\end{proof}

It follows that we have:
\begin{theorem}
\label{thKDP} Let $K$ be a discrete field. The Krull dimension of the 
ring
$K[X_1,\ldots,X_\ell]$ is equal to $\ell$.
\end{theorem}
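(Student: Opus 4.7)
The plan is to verify the two inequalities $\dim K[X_1,\ldots,X_\ell] \geq \ell$ and $\dim K[X_1,\ldots,X_\ell] \leq \ell$ separately, following Definition \ref{def.dimKrull}.

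For the lower bound I would exhibit the obvious candidate pseudo regular sequence $(X_1,\ldots,X_\ell)$. Distributing the nested expression
$$
X_1^{m_1}\bigl(X_2^{m_2}(\cdots(X_\ell^{m_\ell}(1+a_\ell X_\ell) + a_{\ell-1}X_{\ell-1})\cdots)+a_1 X_1\bigr)
$$
one layer at a time gives a sum of $\ell+1$ pieces: the innermost constant ``$1$'' contributes exactly $X_1^{m_1}X_2^{m_2}\cdots X_\ell^{m_\ell}$; the piece from $a_\ell X_\ell$ has $X_\ell$-degree $\geq m_\ell+1$; and, for $1\leq k\leq \ell$, the piece from $a_{k-1}X_{k-1}$ has the form $X_1^{m_1}\cdots X_{k-1}^{m_{k-1}+1}\,a_{k-1}$, so every monomial it contributes has $X_{k-1}$-degree strictly greater than $m_{k-1}$. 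Hence the monomial $X_1^{m_1}\cdots X_\ell^{m_\ell}$ is produced only once and the polynomial is nonzero in $K[X_1,\ldots,X_\ell]$, proving pseudo regularity.

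For the upper bound, by Proposition \ref{propKrDimetDegTr} it is enough to show that any $\ell+1$ elements $f_1,\ldots,f_{\ell+1}$ of $K[X_1,\ldots,X_\ell]$ are algebraically dependent over $K$; then every elementary chain of length $\ell+1$ collapses. Setting $N=\max_i \deg f_i$, the $\binom{d+\ell+1}{\ell+1}$ monomial products $\prod_i f_i^{j_i}$ with $\sum_i j_i\leq d$ all lie in the $K$-vector space of polynomials of total degree at most $dN$, of dimension $\binom{dN+\ell}{\ell}$. The first is a polynomial of degree $\ell+1$ in $d$, the second of degree $\ell$; for $d$ sufficiently large (explicitly computable from $N$ and $\ell$) the first exceeds the second and a nontrivial $K$-linear relation among the $\prod_i f_i^{j_i}$ exists, i.e.\ an algebraic dependence over $K$.

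The only nonroutine point is the effective algebraic dependence argument, which works because $K$ is discrete: Gaussian elimination produces a nontrivial kernel vector of the (overdetermined) coefficient system, and this vector is the sought dependence relation. The rest of the proof is a direct monomial computation and an appeal to Proposition \ref{propKrDimetDegTr}.
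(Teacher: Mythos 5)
Your proof is correct and follows essentially the same route as the paper: the upper bound via Proposition \ref{propKrDimetDegTr} together with the algebraic dependence of any $\ell+1$ polynomials over the discrete field $K$, and the lower bound via the sequence $(X_1,\ldots,X_\ell)$. The only difference is one of detail: where the paper simply notes that $(X_1,\ldots,X_\ell)$ is regular (hence pseudo regular by Proposition \ref{prop.regseq}) and leaves the dependence count implicit (it is spelled out later, in the proof of Proposition \ref{propKrRelPol}), you verify the non-collapse by a direct monomial computation and give the dimension count explicitly.
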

\begin{proof}{Proof}
Given proposition \ref{propKrDimetDegTr} it is enough to check that the
sequence $(X_1,\ldots,X_\ell)$ is pseudo regular. But this sequence is
regular.
\end{proof}

Notice that we got this basic result quite directly, and that our
argument is of course also valid classically (with the usual
definition of Krull dimension).
This contradicts the current opinion that constructive arguments are
necessarily more involved than classical proofs.
\subsection{Krull dimension and local-global principle}
\label{subsecKrLocGlob}
\subsubsection*{Comaximal monoids}
\begin{definition}
\label{def.moco} ~
\begin{itemize}
\item [$(1)$] The monoids $S_1,\ldots ,S_n$ of a ring $R$ are
 {\em \comz} \ssi an ideal of $R$ that meets each $S_i$ contains
 $1$, \cad
$$ \forall s_1\in S_1 \;\cdots\; \forall s_n\in S_n \;\;
\exists a_1,\ldots, a_n\in A\quad \sum_{i= 1}^{n} a_i s_i = 1.
$$
\item [$(2)$] The
{\em \mos $S_1,\ldots ,S_n$ of the ring $R$ cover the \mo $S$}
if $S$ is a subset of each $S_i$ and if any ideal of $R$ which meets
each of the $S_i$ meets also $S$, \cad
$$ \forall s_1\in S_1 \;\cdots\; \forall s_n\in S_n \;\;
\exists a_1,\ldots, a_n\in A\quad \sum_{i= 1}^{n} a_i s_i \in S.
$$
\end{itemize}
\end{definition}

\begin{notation}
\label{notaS}
{\rm  If $(I;U)$ is an \proi of $R$, we write $\cS(I;U)$ the \mo
$\cM(U)+\gen{I}$ of the \proi obtained by completing $(I;U)$.
}
\end{notation}

The fundamental example of \moco is the following:
when $s_1,\ldots, s_n\in R$ are such that
$\gen{s_1,\ldots, s_n}= \gen{1}$,  the \mos $\cM(s_i)$ are \comz.

The following two lemmas are also quite usefull to build
\mocoz.
\begin{lemma}
\label{lemAssoc} (easy computations)
\begin{itemize}
\item [$(1)$] (associativity) If the \mos $S_1,\ldots ,S_n$ of the ring
$R$ cover the \mo $S$  and if each $S_\ell$ is covered by the \mos
$S_{\ell,1},\ldots ,S_{\ell,m_\ell}$ then the \mos $S_{\ell,j}$ cover 
$S$.
\item [$(2)$] (transitivity) Let $S$ be a \mo of the ring $R$ and
$S_1,\ldots ,S_n$ be \moco of the localization $R_S$.
For $\ell= 1,\ldots,n$ let $V_\ell$ be the  \mo
of  $R$ which consist of the numerators of the elements of $S_\ell$.
The \mos $V_1,\ldots ,V_n$ cover  $S$.
\end{itemize}
\end{lemma}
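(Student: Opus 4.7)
\medskip

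\noindent\textbf{Plan of proof.}

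For part $(1)$ (associativity), I would proceed by a single direct computation. Fix arbitrary elements $s_{\ell,j}\in S_{\ell,j}$ for $\ell=1,\ldots,n$ and $j=1,\ldots,m_\ell$. Applying the covering hypothesis at the inner level, for each $\ell$ there exist coefficients $a_{\ell,j}\in R$ such that $s_\ell\eqdefi \sum_{j=1}^{m_\ell}a_{\ell,j}\,s_{\ell,j}\in S_\ell$. Then applying the outer covering hypothesis to the tuple $(s_1,\ldots,s_n)$, we obtain $b_1,\ldots,b_n\in R$ with $\sum_{\ell=1}^{n}b_\ell\,s_\ell\in S$. Substituting gives
$$
\sum_{\ell,j}(b_\ell\,a_{\ell,j})\,s_{\ell,j}\;=\;\sum_{\ell=1}^{n}b_\ell\,s_\ell\;\in\;S,
$$
which is exactly the covering identity demanded of the $S_{\ell,j}$'s. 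This is essentially a diagonal calculation, so I do not expect any difficulty here.

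For part $(2)$ (transitivity), the plan is to unwind the definition of $V_\ell$ as $\{v\in R:\exists\, t\in S,\ v/t\in S_\ell\}$, and then to verify the two clauses in the definition of ``covering''. I would first check $V_\ell$ is a monoid: since $S_\ell\ni 1_{R_S}=1/1$, we have $1\in V_\ell$; and if $v_i/t_i\in S_\ell$ for $i=1,2$ then $v_1v_2/(t_1t_2)\in S_\ell$, so $v_1v_2\in V_\ell$. Next I would check the inclusion $S\subseteq V_\ell$: for any $s\in S$, the identity $s/s=1/1\in S_\ell$ shows that $s$ is the numerator of a representative of an element of $S_\ell$, so $s\in V_\ell$.

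It remains to prove the covering property itself. Given arbitrary $v_\ell\in V_\ell$, pick $t_\ell\in S$ with $v_\ell/t_\ell\in S_\ell$. By comaximality of the $S_\ell$'s in $R_S$, there exist elements $\alpha_\ell=a_\ell/u_\ell\in R_S$ (with $a_\ell\in R$, $u_\ell\in S$) satisfying $\sum_\ell \alpha_\ell\cdot(v_\ell/t_\ell)=1$ in $R_S$. Clearing denominators in this fractional equality produces an $s\in S$ such that
$$
s\sum_{\ell}a_\ell v_\ell\!\!\prod_{k\neq\ell}\!u_k t_k \;=\; s\prod_{k}u_k t_k
$$
in $R$. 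Setting $b_\ell\eqdefi s\,a_\ell\prod_{k\neq\ell}u_k t_k$, the left side is $\sum_\ell b_\ell v_\ell$ and the right side lies in $S$, which is precisely the required covering identity $\sum_\ell b_\ell v_\ell\in S$.

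The main obstacle is a notational one: making precise what ``numerators of the elements of $S_\ell$'' means (i.e.\ choosing the definition $V_\ell=\{v\in R:\exists t\in S,\ v/t\in S_\ell\}$) so that $S\subseteq V_\ell$ holds in a satisfying way, and then bookkeeping the denominator-clearing step carefully enough that the constructive witness $b_\ell$ is produced explicitly. No deep idea is required, but one must be attentive because the definition of ``cover'' combines both an inclusion and an ideal-theoretic condition, both of which must be verified.
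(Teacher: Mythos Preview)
Your proof is correct and is exactly the kind of direct unwinding the paper has in mind: the paper does not give a proof at all, merely labelling the lemma ``(easy computations)'' and moving on. One very minor omission in your write-up of part~$(1)$ is the inclusion clause $S\subseteq S_{\ell,j}$, but this is immediate from $S\subseteq S_\ell\subseteq S_{\ell,j}$ (each covering relation carrying an inclusion), and you do flag at the end that both clauses of the covering definition must be checked.
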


\begin{lemma}
\label{lemRecouvre}
Let $U$ and $I$ be two subsets of the ring $R$ and $a\in R$, then the 
\mos
$\cS(I,a;U)$ and $\cS(I;a,U)$ cover the \mo $\cS(I;U)$.
\end{lemma}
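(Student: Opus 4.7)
The plan is to unwind the notation $\cS$: by definition $\cS(I;U)=\cM(U)+\gen{I}$, so $\cS(I,a;U)=\cM(U)+\gen{I}+aR$ and $\cS(I;a,U)=\cM(U\cup\{a\})+\gen{I}$. Both clearly contain $\cS(I;U)$, so the real content of the lemma is the second clause of Definition~\ref{def.moco}: given $s_1\in\cS(I,a;U)$ and $s_2\in\cS(I;a,U)$, I need to produce $\alpha_1,\alpha_2\in R$ with $\alpha_1 s_1+\alpha_2 s_2\in\cS(I;U)$.

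First I would fix decompositions $s_1=u_1+i_1+ab$ with $u_1\in\cM(U)$, $i_1\in\gen{I}$, $b\in R$, and $s_2=u_2 a^k+i_2$ with $u_2\in\cM(U)$, $i_2\in\gen{I}$, $k\ge 0$. The case $k=0$ is trivial since then $s_2\in\cS(I;U)$ already, so assume $k\ge 1$. The key is a Rabinovitch-type elimination of $a$ imitating the trick used in Theorem~\ref{ThColSim1}: from $ab=s_1-(u_1+i_1)$ I expand
$$(ab)^k=(s_1-(u_1+i_1))^k=s_1\cdot Q+(-1)^k(u_1+i_1)^k$$
for an explicit $Q\in R$ obtained from the binomial expansion. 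Multiplying by $u_2$ and using $u_2 a^k=s_2-i_2$ yields
$$b^k s_2-s_1\,u_2 Q=(-1)^k u_2(u_1+i_1)^k+b^k i_2.$$

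The next step is to check that the right hand side lies in $\cS(I;U)=\cM(U)+\gen{I}$. Expanding $(u_1+i_1)^k=u_1^k+i_1\cdot P$ for some $P\in R$ gives $u_2(u_1+i_1)^k=u_1^k u_2 + u_2 i_1 P$, where $u_1^k u_2\in\cM(U)$ and $u_2 i_1 P\in\gen{I}$; since also $b^k i_2\in\gen{I}$, this succeeds exactly when the sign $(-1)^k$ is $+1$, i.e.\ when $k$ is even.

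The only (minor) obstacle is this sign issue when $k$ is odd. I would resolve it by replacing $s_2$ by $s_2^2=u_2^2 a^{2k}+i_2'$ with $i_2':=2u_2 a^k i_2+i_2^2\in\gen{I}$: this is of the same form but with the even exponent $2k$, so the argument above produces $\alpha_1,\alpha_2'$ with $\alpha_1 s_1+\alpha_2' s_2^2\in\cS(I;U)$, whence $\alpha_2:=\alpha_2' s_2$ gives $\alpha_1 s_1+\alpha_2 s_2\in\cS(I;U)$, completing the proof.
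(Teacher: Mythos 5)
Your proof is correct and takes essentially the same route as the paper: decompose $s_1=u_1+i_1+ab\in\cS(I,a;U)$ and $s_2=u_2a^k+i_2\in\cS(I;a,U)$ and eliminate $a$ by a Rabinovitch-style power trick to land a linear combination in $\cM(U)+\gen{I}$. The only difference is that the paper applies the identity $c^k-d^k=(c-d)(c^{k-1}+\cdots+d^{k-1})$ to the element written as $(u_2+j_2)-az$, so the unwanted $a^kz^k$ term cancels with the right sign for every $k$, avoiding the parity discussion and the squaring workaround you use for odd $k$ (which is nonetheless a valid fix).
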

\begin{proof}{Proof}
For  $x\in \cS(I;U,a)$ and  $y\in \cS(I,a;U)$
we must find a  \coli $x_1x+y_1y\in \cS(I;U)$ ($x_1,y_1\in R$).
We write $x= u_1a^k+j_1$, $y= (u_2+j_2)-(az)$ with $u_1,u_2\in 
\cM(U)$,
   $j_1,j_2\in \cI(I)$,   $z\in R$. The fundamental identity
$\; c^k-d^k= (c-d)\times \cdots\; $ gives
$y_2\in R$  such that $y_2y= (u_2+j_2)^k-(az)^k= (u_3+j_3)-(az)^k$
and we write $z^kx+u_1y_2y= u_1u_3+u_1j_3+j_1z^k= u_4+j_4$.
\end{proof}

\begin{corollary}
\label{corS} Let $u_1,\ldots,u_n\in R$.
Let $S_k= \cS((u_i)_{i>k};u_k)$ $(k= 1,\ldots,n)$,
$S_0= \cS((u_i)_{i= 1,\ldots,n};1)$. Then the \mos
$S_0,S_1,\ldots,S_n$ are \comz.
\end{corollary}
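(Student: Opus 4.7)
The plan is to reduce the corollary to a repeated application of Lemma \ref{lemRecouvre}, glued together by the associativity of covering (Lemma \ref{lemAssoc}$(1)$). The first observation I would make is that comaximality of $S_0,\ldots,S_n$ is exactly the statement that these monoids cover the trivial monoid $\cS(\emptyset;\emptyset)=\cM(\emptyset)+\gen{\emptyset}=\{1\}$: an ideal meets $\{1\}$ precisely when it contains $1$, and $\{1\}$ is automatically contained in every $S_i$. So it suffices to exhibit a chain of coverings starting from $\cS(\emptyset;\emptyset)$ and ending with the family $\{S_0,S_1,\ldots,S_n\}$.

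To build this chain I would peel off the $u_k$'s one at a time, working from $u_n$ down to $u_1$. At the generic step, applied to $I=\{u_n,u_{n-1},\ldots,u_{k+1}\}$, $U=\emptyset$, and $a=u_k$, Lemma \ref{lemRecouvre} tells me that
\[
\cS(u_n,\ldots,u_{k+1},u_k;\emptyset)\quad\text{and}\quad \cS(u_n,\ldots,u_{k+1};u_k)
\]
cover $\cS(u_n,\ldots,u_{k+1};\emptyset)$. The second of these is exactly $S_k=\cM(u_k)+\gen{u_{k+1},\ldots,u_n}$, while the first has the same shape as the monoid we started with at the next step, so the induction continues.

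Chaining these steps with Lemma \ref{lemAssoc}$(1)$, I conclude that $\cS(u_1,\ldots,u_n;\emptyset)$ together with $S_1,\ldots,S_n$ cover $\cS(\emptyset;\emptyset)=\{1\}$. The proof is then finished by the identification $\cS(u_1,\ldots,u_n;\emptyset)=\cM(\emptyset)+\gen{u_1,\ldots,u_n}=1+\gen{u_1,\ldots,u_n}=S_0$, using the convention that $\cM(\emptyset)=\{1\}$.

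I do not expect any real obstacle: the whole argument is bookkeeping, and the only small subtlety is recognising from the definition of $\cS$ that the leftover ``all-ideal, no-monoid'' piece $\cS(u_1,\ldots,u_n;\emptyset)$ coincides with $S_0$, and that comaximality is literally covering of the one-element monoid $\{1\}$. Once these two points are in place, the induction driven by Lemma \ref{lemRecouvre} and the associativity clause of Lemma \ref{lemAssoc} gives the result mechanically.
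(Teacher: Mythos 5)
Your proof is correct and is essentially the argument the paper intends: the corollary is stated without proof precisely because it follows by iterating Lemma \ref{lemRecouvre} (adding $u_n,\ldots,u_1$ one at a time) and gluing the successive coverings with the associativity clause of Lemma \ref{lemAssoc}, exactly as you do. Your two bookkeeping observations --- that comaximality is covering of the trivial monoid $\{1\}=\cS(\emptyset;\emptyset)$, and that the leftover piece $\cS(u_1,\ldots,u_n;\emptyset)$ is $S_0$ since $\cM(\emptyset)=\cM(1)=\{1\}$ --- are accurate and complete the identification.
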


The \moco are a constructive tool which allows in general
to replace abstract local-global arguments
by explicit computations.
If  $S_1,\ldots ,S_n$ are \moco of the ring $R$, the product of all
localisation $R_{S_i}$ is a faithfully flat $R$-algebra.
Hence a lot of properties are true for $R$ \ssi they hold
for each of the $R_{S_i}$.

 In the next paragraph this will be illustrated on the example
of Krull dimension.
\subsubsection*{Local character of Krull dimension}
The following proposition is direct.
\begin{proposition}
\label{propKrLoc}
Let $R$ be a ring. Its Krull dimension is always greater or equal
to any of its quotient or localisation.
More precisely, any \proel which collapses in $R$ collapses
in any quotient and localisation of $R$ and any \proel
in a localisation of $R$ is equivalent to an \proel
of $R$.
Finally, if an \proel $\ov{(a_1,\ldots,a_\ell)}$ of $R$ collapses in
a localisation $R_S$, there exists $m$ in $S$ such that
$\ov{(a_1,\ldots,a_\ell)}$
collapses in  $R[1/m]$.
\end{proposition}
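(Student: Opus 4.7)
The proposition packages three assertions: (1) any collapsus of an elementary chain of $R$ descends to a collapsus in every quotient and every localisation; (2) any elementary chain in $R_S$ is equivalent to one whose entries come from $R$; and (3) a collapsus in $R_S$ of an elementary chain with entries in $R$ already takes place in some $R[1/m]$ with $m\in S$. Combined, (1) and (2) yield $\dim R \ge \dim (R/I)$ and $\dim R \ge \dim R_S$: any elementary chain of length $n+1$ in the quotient (resp.\ in $R_S$) either is the image of, or is equivalent to, an elementary chain of $R$, whose collapsus granted by the dimension assumption on $R$ transfers via (1).

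Claim (1) is formal. A collapsus of $\overline{(x_1,\ldots,x_\ell)}$ is an equation
$$x_1^{m_1}(x_2^{m_2}(\cdots(x_\ell^{m_\ell}(1+a_\ell x_\ell)+\cdots)+a_2x_2)+a_1x_1)=0$$
in $R$. Applying any ring homomorphism $R\to R'$ (in particular $R\to R/I$ and $R\to R_S$) preserves this identity, which becomes a collapsus of the image elementary chain in $R'$.

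For (2), let $(\xi_1,\ldots,\xi_\ell)$ be a sequence in $R_S$ and write $\xi_i=a_i/s_i$ with $a_i\in R$, $s_i\in S$. Since $s_i$ is invertible in $R_S$, we have $a_i=s_i\xi_i$ and $\xi_i=s_i^{-1}a_i$, so each of $a_i$ and $\xi_i$ divides the other and a fortiori divides a power of the other. Lemma \ref{lem.pseudoreg} (or rather the refinement argument in its proof, which shows that the associated saturated chains actually coincide) then gives that $\overline{(\xi_1,\ldots,\xi_\ell)}$ is equivalent in $R_S$ to $\overline{(a_1,\ldots,a_\ell)}$, whose entries come from $R$.

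Claim (3) is the only genuine bookkeeping, and is where I expect the main technical point to lie. Suppose $\overline{(a_1,\ldots,a_\ell)}$ collapses in $R_S$ via
$$a_1^{m_1}(a_2^{m_2}(\cdots(a_\ell^{m_\ell}(1+c_\ell a_\ell)+\cdots)+c_2a_2)+c_1a_1)=0$$
with $c_i\in R_S$. I choose a common denominator $t\in S$ and write $c_i=b_i/t$, so that the left hand side equals $e/t^N$ for some $e\in R$ and some $N\in\N$; the vanishing in $R_S$ then produces $s\in S$ with $se=0$ in $R$. Setting $m:=st\in S$, the element $e$ vanishes in $R[1/m]$, so the displayed identity holds verbatim in $R[1/m]$, where the witnesses $c_i=b_i/t$ also now live. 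The subtlety is precisely the double enlargement of the denominator from $t$ (which makes the $c_i$ available) to $st$ (which additionally annihilates $e$), so that witnesses and killing element are simultaneously visible in a single localisation.
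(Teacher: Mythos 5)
Your proof is correct and is essentially the argument the paper intends: the proposition is stated there without proof (it is declared ``direct''), and your verification is exactly the routine check — a collapsus is an algebraic identity preserved by the homomorphisms $R\to R/I$ and $R\to R_S$, the entries of a chain in $R_S$ are associates of images of elements of $R$ so the saturation argument behind Lemma~\ref{lem.pseudoreg} gives equivalence, and clearing the common denominator $t$ together with the annihilator $s$ of the numerator yields the collapsus in $R[1/m]$ with $m=st$. Your explicit handling of the ``double enlargement'' of the denominator is precisely the point the paper leaves implicit.
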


\begin{proposition}
\label{propKrLocGlob}
Let  $S_1,\ldots ,S_n$  be \moco  of the ring $R$ and $\cC$ be an \proc
of $R$. Then $\cC$ collapses in $R$ \ssi it  collapses in each of the
 $R_{S_i}$. In particular the Krull dimension of $R$ is $\leq \ell$ \ssi
the Krull dimension of each of the $R_{S_i}$ is $\leq \ell$.
\end{proposition}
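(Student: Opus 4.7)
The plan is as follows. The forward direction follows directly from Proposition~\ref{propKrLoc}, which asserts that any collapse of an \proc in $R$ is preserved in every localization $R_{S_i}$; we concentrate on the reverse direction.

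Assume $\cC=((J_0,U_0),\ldots,(J_\ell,U_\ell))$ collapses in each $R_{S_i}$. The strategy is to extract, for each $i$, an element $m_i\in S_i$ such that $\cC\,\&\,\{m_i\notin \cC^{(\ell)}\}$ collapses in $R$, and then conclude via Fact~\ref{corColSimKrA}(3). Fix an $i$; after clearing to a common denominator $s\in S_i$, the collapse identity in $R_{S_i}$ involves monoid factors $u_k\in \cM(U_k)\subseteq R$ and ideal factors $\tilde j_k\in \gen{J_k}$. Setting $a_\ell := s u_\ell+\tilde j_\ell$ and recursively $a_k:=u_k a_{k+1}+\tilde j_k$, the nested expression equals $a_0/s$ in $R_{S_i}$; its vanishing yields $t\cdot a_0=0$ in $R$ for some $t\in S_i$. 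Pushing the factor $t$ inward through each level while absorbing $ts$ into the innermost monoid element rewrites the identity as
\[
u_0(u_1(\cdots u_{\ell-1}(m_i u_\ell + t\tilde j_\ell)+t\tilde j_{\ell-1})+\cdots+t\tilde j_1)+t\tilde j_0=0
\]
with $m_i:=ts\in S_i$, which is precisely a collapse of $\cC$ with $m_i$ adjoined to $U_\ell$.

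Now by the \com hypothesis and $m_j\in S_j$, we obtain $b_1,\ldots,b_n\in R$ with $\sum_j b_j m_j=1$. Apply Fact~\ref{corColSimKrA}(3) to $\cC$ at level $\ell$ with the family $m_1,\ldots,m_n$: for every complementary pair $(H,H')$ of $\{1,\ldots,n\}$ we must see that the chain obtained by adjoining $\{m_h\}_{h\in H}$ to $J_\ell$ and $\{m_h\}_{h\in H'}$ to $U_\ell$ collapses. When $H'\neq\emptyset$, pick any $j\in H'$; the chain in question refines $\cC\,\&\,\{m_j\notin \cC^{(\ell)}\}$, which collapses by the above, so Fact~\ref{factColAc} finishes this case. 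When $H'=\emptyset$, every $m_j$ enters $J_\ell$, whence $1=\sum_j b_j m_j\in \gen{J_\ell\cup\{m_1,\ldots,m_n\}}$; the completed ideal at level $\ell$ is then $R$, so $V_0=R$ and the chain collapses trivially. Fact~\ref{corColSimKrA}(3) thus yields the collapse of $\cC$ in $R$.

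The Krull dimension statement reduces to the chain version just proved, applied to \proelsz. For the forward implication, any \proel $\overline{(a_1/s_1,\ldots,a_{\ell+1}/s_{\ell+1})}$ in $R_{S_i}$ is equivalent, by Lemma~\ref{lem.pseudoreg}, to $\overline{(a_1,\ldots,a_{\ell+1})}$ viewed in $R_{S_i}$, since $a_k/s_k$ and $a_k$ divide each other through the units $s_k$; combined with Proposition~\ref{propKrLoc} this shows $\dim R\le\ell$ implies $\dim R_{S_i}\le\ell$. The reverse is immediate from the chain part. The chief obstacle is the clearing-of-denominators step: one must arrange that the two $S_i$-factors arising from the passage $R_{S_i}\to R$ (the common denominator $s$ and the annihilator $t$) combine into a single element $m_i=ts$ attached to a single fixed level of the chain, so that Fact~\ref{corColSimKrA}(3) applies uniformly.
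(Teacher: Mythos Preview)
Your proof is correct, but it takes a different route from the paper's own argument. The paper proceeds by a direct algebraic computation: from the collapse in each $R_{S_i}$ it extracts an equation in $R$ of the form
\[
s_i\,u_{0,i}u_{1,i}\cdots u_{\ell,i}+u_{0,i}\cdots u_{\ell-1,i}\,j'_{\ell,i}+\cdots+j'_{0,i}=0
\]
with $s_i\in S_i$, then \emph{uniformises} the monoid factors by replacing each $u_{k,i}$ with the product $u_k:=\prod_i u_{k,i}$ (multiplying each equation by the appropriate complementary product), and finally forms the linear combination $\sum_i a_i(\cdot)$ using $\sum_i a_i s_i=1$ to obtain a single collapse of $\cC$ in $R$.

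Your approach is more structural: you package the contribution of each $R_{S_i}$ as a collapse of $\cC\,\&\,\{m_i\notin\cC^{(\ell)}\}$ with $m_i\in S_i$, and then invoke Fact~\ref{corColSimKrA}(3) over all complementary partitions of $\{m_1,\ldots,m_n\}$, the empty-$H'$ case being handled by $1=\sum b_j m_j$. This is perfectly valid and arguably cleaner conceptually, but it leans on the simultaneous-collapse machinery (ultimately Theorem~\ref{ThColSimKrA}), whereas the paper's argument is entirely elementary and self-contained. The paper's route also makes the resulting algebraic identity in $R$ completely explicit, which fits the paper's emphasis on extracting concrete certificates; your route hides that identity inside the proof of Fact~\ref{corColSimKrA}(3).

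A minor remark: for the Krull-dimension corollary you do not need Lemma~\ref{lem.pseudoreg}; Proposition~\ref{propKrLoc} already states that every \proel in $R_{S_i}$ is equivalent to one coming from $R$.
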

\begin{proof}{Proof}
We have to show that an \proc $\cC$ collapses in $R$ if it  collapses in 
each
of the $R_{S_i}$. To simplify let us take a chain of length
$2$: $((J_0,U_0),(J_1,U_1),(J_2,U_2))$ with ideals $J_k$  and \mos
$U_k$. In each  $R_{S_i}$ we have an equality
$$ u_{0,i}\,u_{1,i}\,u_{2,i}+u_{0,i}\,u_{1,i}\,j_{2,i}
+u_{0,i}\,j_{1,i}+j_{0,i}= 0
$$
with $u_{k,i}\in U_k$ and  $j_{k,i}\in J_kR_{S_i}$.
This implies an equality in $R$ of the form
$$ s_i\,u_{0,i}\,u_{1,i}\,u_{2,i}+u_{0,i}\,u_{1,i}\,j'_{2,i}
+u_{0,i}\,j'_{1,i}+j'_{0,i}= 0
$$
with $s_i\in S_i$, $u_{k,i}\in U_k$ and  $j'_{k,i}\in J_k$.
We take $u_k= \prod_iu_{k,i}$. By multiplying the previous equation
by a suitable product we get an equality
$$ s_i\,u_{0}\,u_{1}\,u_{2}+u_{0}\,u_{1}\,j''_{2,i}
+u_{0}\,j''_{1,i}+j''_{0,i}= 0\quad \quad (E_{i})
$$
with $s_i\in S_i$, $u_{k}\in U_k$ and $j''_{k,i}\in J_k$.
We now write
 $\sum_i a_is_i= 1$, we multiply the each equality  $(E_i)$
by $a_i$ and we sum all these equalities.
\end{proof}

\subsubsection*{An application}
In classical mathematics the Krull dimension of a ring is
the upper bound of the Krull dimension of the localisation in
each maximal ideals.
This follows easily (classically) from  propositions \ref{propKrLoc} and
\ref{propKrLocGlob}.

Proposition \ref{propKrLocGlob} should have the same concrete
consequences (that we can obtain non constructively by using the 
classical
property above) even we don't have access to the maximal ideals.

\ss We will limit ourselves here to describe a simple example, where
we do have access to the maximal ideals.
Suppose that we have a simple constructive argument showing that
the Krull dimension  of  $\Z_{(p)}[x_1,\ldots,x_\ell]$ is
 $\leq\ell+1$
($p$ bing an arbitrary prime number, and $\Z_{(p)}$ the localisation of
$\Z$ in $p\Z$). We can then deduce that the same holds for
$R= \Z[x_1,\ldots,x_\ell]$ using the local-global principle above.

 Indeed, consider a sequence $(a_1,\ldots,a_{\ell+2})$ in $R$.
The collapsus of the  \proel $\ov{(a_1,\ldots,a_{\ell+2})}$ in
$\Z_{(2)}[x_1,\ldots,x_\ell]$ can be read as a collapsus in
$\Z[1/m_0][x_1,\ldots,x_\ell]$ for some odd $m_0$.
For each of the prime divisor  $p_i$ of $m$ ($i= 1,\ldots,k$), the
collapsus of the \proel $\ov{(a_1,\ldots,a_{\ell+2})}$ in
$\Z_{(p_i)}$ can be read as a collapsus in
$\Z[1/m_i][x_1,\ldots,x_\ell]$ for some $m_i$ relatively prime to $p_i$.
The integers $m_i$ ($i= 0,\ldots,k$) generate the ideal $\gen{1}$, 
hence the
\mos $\cM(m_i)$ are \com and we can apply proposition
\ref{propKrLocGlob}.


\section{Distributive lattice, Entailment relations and Krull dimension}
\label{secKrullTreil}
\subsection{Distributive lattices, filters and spectrum}
\label{subsecTrd1}
A distributive lattice is an ordered set with finite sups and infs,
a minimum element (written $0$) and a maximum element (written $1$).
The operations sup and inf are supposed to be distributive w.r.t.
the other. We write these operations
$\vu$ and $\vi$. The relation
 $a\leq b$  can then be defined by $a\vu b= b$. The theory of 
distributive
lattices is then purely equational. It makes sense then to talk of
distributive lattices defined by generators and relations.

 A quite important rule, the {\em cut rule}, is the following
$$ \left( ((x\vi a)\; \leq\;  b)\quad\&\quad  (a\; \leq\; (x\vu  b))
\right)\; \Longrightarrow \; a \leq\;  b.
$$
In order to prove this, write $ x\vi a\vi b= x\vi a$  and
$a=  a\vi(x\vu b)$ hence
$$ a= (a\vi x)\vu(a\vi b)= (a\vi x\vi b)\vu(a\vi b)= a\vi b.
$$

A totally ordered set is a distributive lattice as soon as it has
a maximum and a minimum element.
We write ${\bf n}$ the totally ordered set with
 $n$ elements (this is a distributive lattice for $n\neq 0$.)
A product of distributive lattices is a distributive lattice.
Natural numbers with the divisibility relation form a distributive 
lattice
(with minimum element $1$ and maximum element $0$).
If $T$  and $T'$ are two distributive lattices, the set $\Hom(T,T')$ of 
all
morphisms
(\cad maps preserving sup, inf, $0$ and $1$) from  $T$ to $T'$ has a
natural order
given by
$$ \varphi \leq \psi \equidef  \forall x\in T\;
\;
\varphi(x) \leq \psi(x).
$$
A map between two totally ordered distributive lattices $T$ and $S$
is a morphism \ssi it is nondecreasing and $0_T$ and $1_T$ are mapped 
into
$0_S$ and $1_S$.

The following proposition is direct.
\begin{proposition}
\label{propIdeal} Let $T$ be a distributive lattice and $J$ a subset of
$T$. We consider the distributive lattice $T'$ generated by $T$
and the relations
 $x= 0$ for $x\in J$ ($T'$ is a quotient of $T$).  Then
\begin{itemize}
\item  the equivalence class of $0$ is the set of $a$ such that
for some finite subset $J_0$ of $J$:
$$  a\; \leq\; \Vu_{x\in J_0}x\quad {\rm in} \;  T
$$
%
\item  the equivalence class of $1$ is the set of $b$ such that
for some finite subset $J_0$ of $J$:
$$1\;= \;\left( b\;\vu\;\Vu_{x\in J_0}x\right)\quad {\rm in} \;  T
$$
%
\item  More generally $a\leq_{T'}b$ \ssi
for some finite subset $J_0$ of $J$:
$$  a\; \leq\;  \left( b\; \vu\; \Vu_{x\in J_0}x\right)
$$
\end{itemize}
\end{proposition}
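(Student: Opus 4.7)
The plan is to prove the third, most general statement directly, since the first two follow immediately by specialisation: taking $b = 0$ gives the characterisation of the equivalence class of $0$, and taking $a = 1$ gives the characterisation of the equivalence class of $1$ (since $1 \leq b \vu \Vu_{x \in J_0} x$ forces equality). So I focus on characterising $a \leq_{T'} b$.

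Define a binary relation $\preceq$ on $T$ by $a \preceq b$ iff there is a finite $J_0 \subseteq J$ with $a \leq b \vu \Vu_{x \in J_0} x$ in $T$. The easy direction is $a \preceq b \Rightarrow a \leq_{T'} b$: the canonical morphism $T \to T'$ sends each $x \in J$ to $0_{T'}$, so it sends $b \vu \Vu_{x \in J_0} x$ to $b$ and preserves the inequality. For the converse I would construct the quotient of $T$ by $\preceq$ directly and identify it with $T'$ via the universal property.

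Let $a \sim b$ mean $a \preceq b$ and $b \preceq a$. The main task is to show that $\sim$ is a congruence for $\vu$ and $\vi$. Reflexivity of $\preceq$ uses $J_0 = \emptyset$; transitivity takes the union of the two finite witnesses. Compatibility with $\vu$ is immediate from $(b_1 \vu c_1) \vu (b_2 \vu c_2) = (b_1 \vu b_2) \vu (c_1 \vu c_2)$. The crucial step is compatibility with $\vi$: given $a_i \leq b_i \vu c_i$ with $c_i$ a finite join from $J$ for $i=1,2$, distributivity yields
\[ a_1 \vi a_2 \;\leq\; (b_1 \vu c_1) \vi (b_2 \vu c_2) \;=\; (b_1 \vi b_2) \vu (b_1 \vi c_2) \vu (c_1 \vi b_2) \vu (c_1 \vi c_2) \;\leq\; (b_1 \vi b_2) \vu c_1 \vu c_2, \]
so $a_1 \vi a_2 \preceq b_1 \vi b_2$. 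This distributive computation is where the argument really uses that $T$ is a distributive (not merely a) lattice; it is the main and essentially only obstacle.

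Once $\sim$ is a lattice congruence, $T/\!\sim$ is a distributive lattice, and each $x \in J$ is identified with $0$ because $x \leq 0 \vu x$. Thus $T/\!\sim$ is a distributive lattice receiving a morphism from $T$ that kills $J$, so by the universal property of $T'$ there is a morphism $T' \to T/\!\sim$ commuting with the canonical maps. Conversely, the easy direction shown above proves that $T \to T'$ factors through $T/\!\sim$, giving an inverse morphism $T/\!\sim \to T'$. The two morphisms are mutually inverse, hence $a \leq_{T'} b$ iff $[a] \leq [b]$ in $T/\!\sim$ iff $a \preceq b$, which is the required characterisation.
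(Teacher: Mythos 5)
Your proof is correct. The paper in fact offers no proof at all here (it declares the proposition ``direct''), and your argument is exactly the natural verification it leaves implicit: the only point of substance is the compatibility of the relation $a \preceq b$ (i.e.\ $a \leq b \vu \Vu_{x\in J_0}x$ for some finite $J_0\subseteq J$) with $\vi$, which is where distributivity enters, and you identify and carry out that step correctly; the identification of $T/\!\sim$ with $T'$ via the universal property, and the reduction of the first two items to the third, are all sound.
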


In the previous proposition, the equivalence class of $0$ is called
an {\em ideal} of the lattice; it is the ideal generated by $J$.
We write it $\gen{J}_T$. We can easily check that
an ideal $I$ is a subset such that:
$$\begin{array}{rcl}
  & &  0 \in I   \\
x,y\in I& \Longrightarrow   &  x\vu y \in I   \\
x\in I,\; z\in T& \Longrightarrow   &  x\vi z \in I   \\
\end{array}$$
(the last condition can be written $(x\in I,\;y\leq x)\Rightarrow y\in 
I$).

Furthermore, for any morphim
 $\varphi :T_1\rightarrow T_2$,
$\varphi^{-1}(0)$ is an ideal of $T_1$.

A  {\em principal ideal} is an ideal generated by one element $a$.
We have $\gen{a}_T= \{x\in T\; ;\; x\leq a \}$. Any finitely generated
ideal is principal.

 The dual notion of ideal is the one of {\em filter}.
A filter
$F$ is the inverse image of $1$
by a morphism. This is a subset such that:
$$\begin{array}{rcl}
  & &  1 \in F   \\
x,y\in F& \Longrightarrow   &  x\vi y \in F   \\
x\in F,\; z\in T& \Longrightarrow   &  x\vu z \in F   \\
\end{array}$$
\begin{notation}
{\rm We write $\Pf(X)$ the set of all finite subsets of the set $X$.
If $A$ is a finite subset of a distributive lattice $T$
$$ \Vu A:= \Vu_{x\in A}x\qquad {\rm and}\qquad \Vi A:= \Vi_{x\in A}x
$$
We write $A \vdash B$ or $A \vdash_T B$ the relation defined on the set
 $\Pf(T)$:
$$ A \vda B \; \; \equidef\; \; \Vi A\;\leq \;
\Vu B
$$
}
\end{notation}
Note the relation  $A \vdash B$ is well defined on finite subsets 
because of
associativity commutativity and idempotence of the operations
 $\vi$  and $\vu$.
Note also
$\; \emptyset  \vda \{x\}\; \Rightarrow\;  x= 1\; $ and
$ \{y\} \vda \emptyset\; \Rightarrow \; y= 0$.
This relation satisfies the following axioms, where
we write
 $x$ for $\{x\}$ and $A,B$ for $ A\cup B$.
$$\begin{array}{rcrclll}
&    & a  &\vda& a    &\; &(R)     \\
A \vda B &   \; \Longrightarrow \;  & A,A' &\vda& B,B'   &\; &(M)     \\
(A,x \vda B)\;
\&
\;(A \vda B,x)  &   \Longrightarrow  & A &\vda& B &\;
&(T)
\end{array}$$
we say that the relation is reflexive,
 \label{remotr} monotone and
transitive. The last rule is also called emph{cut rule}.
Let us also mention the two following rules of ``distributivity'':
$$\begin{array}{rcl}
(A,\;x \vda B)\;\& \;(A,\;y \vda B)  &  \;  \Longleftrightarrow  \; &
A,\;x\vu y \vda B  \\
(A\vda B,\;x )\;\&\;(A \vda B,\;y)  &   \Longleftrightarrow  &
A\vda B,\;x\vi y
\end{array}$$

The following is a corollary of proposition
\ref{propIdeal}.
\begin{proposition}
\label{propIdealFiltre} Let $T$ be a distributive lattice and
$(J,U)$ a couple of subsets of $T$.
We consider the distributive lattice $T'$ generated by $T$ and by the 
relations
$x= 0$ for $x\in J$ and $y= 1$ for $y\in U$
($T'$ is a quotient of $T$). We have that:
\begin{itemize}
\item  the equivalence class of $0$ is the set of elements $a$ such 
that:
$$ \exists J_0\in\Pf(J),\; U_0\in\Pf(U) \qquad
a,\; U_0 \; \vdash_T\;  J_0
$$
\item  the equivalence class of $1$ is the set of elements $b$ such 
that:
v\'erifient:
$$  \exists J_0\in\Pf(J),\; U_0\in\Pf(U)\qquad
 U_0 \; \vdash_T\; b,\; J_0
$$
\item  More generally $a\leq_{T'}b$ \ssi
there exists a finite subset $J_0$ of $J$ and a finite subset $U_0$ of
$U$ such that, in $T$:
$$  a,\; U_0 \; \vdash_T\; b,\; J_0
$$
\end{itemize}
\end{proposition}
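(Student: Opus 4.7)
The plan is to characterise $\leq_{T'}$ directly as the binary relation $\preceq$ on $T$ defined by: $a\preceq b$ iff there exist finite $J_0\subseteq J$ and $U_0\subseteq U$ with $a,U_0\;\vda\;b,J_0$ in $T$. Once this characterisation is proved, the three bullets are just specialisations: take $b=0$ for the first, $a=1$ for the second (noting that $\emptyset\vda\{x\}$ expresses $x=1$), and keep $(a,b)$ arbitrary for the third. This subsumes Proposition \ref{propIdeal} as the case $U=\emptyset$.

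The inclusion $\preceq\;\subseteq\;\leq_{T'}$ is transparent. A witness $a,U_0\vda b,J_0$, i.e.\ $\Vi(\{a\}\cup U_0)\leq_T\Vu(\{b\}\cup J_0)$, is preserved by the quotient morphism $\pi:T\to T'$; inside $T'$ one has $\pi(x)=0$ for $x\in J_0$ and $\pi(y)=1$ for $y\in U_0$, so the inequality collapses to $a\leq_{T'} b$.

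For the reverse inclusion I would verify that $\preceq$ is a preorder on $T$ that extends $\leq_T$, is compatible with $\vi,\vu,0,1$, and satisfies $x\preceq 0$ for $x\in J$ and $1\preceq y$ for $y\in U$; then equipping $T$ with the equivalence relation $(a\preceq b)\wedge(b\preceq a)$ yields a distributive lattice through which the quotient map factors, and the universal property of $T'$ forces $\leq_{T'}\;\subseteq\;\preceq$. Reflexivity and the containment of $\leq_T$ follow from rule $(R)$ with $J_0=U_0=\emptyset$; the relations $x\preceq 0$ and $1\preceq y$ follow from $(R)$ and monotonicity $(M)$ with $J_0=\{x\}$ or $U_0=\{y\}$. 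Transitivity is the main point where the cut rule $(T)$ enters: from $a,U_0\vda b,J_0$ and $b,U_1\vda c,J_1$, monotonicity delivers $a,U_0\cup U_1\vda b,J_0\cup J_1$ and $a,b,U_0\cup U_1\vda c,J_0\cup J_1$, and cutting on $b$ produces the desired $a,U_0\cup U_1\vda c,J_0\cup J_1$. Compatibility with $\vi$ and $\vu$ is handled by the two distributivity rules stated just before the proposition: for $\vu$, if $a\preceq b$ and $a'\preceq b$ then merging certificates to a common pair $(U_0,J_0)$ and applying the left distributivity rule gives $a\vu a',U_0\vda b,J_0$; dually for $\vi$. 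Preservation of $\leq$ under $\vi,\vu$ on both sides then follows from transitivity together with the old inequalities $a\vi b\leq_T a$, $a\leq_T a\vu b$, etc.

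The only real obstacle is keeping the bookkeeping of the finite certificates straight when combining two or three relations at once; conceptually everything is a direct transcription of the standard entailment-relation manipulations and does not require any new idea beyond rules $(R)$, $(M)$, $(T)$ and the two distributivity rules. Once the preorder structure of $\preceq$ is in place, the three bullets of the proposition are simply read off.
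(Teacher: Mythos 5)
Your argument is correct: the characterisation $a\preceq b\;\Leftrightarrow\;\exists J_0,U_0$ finite with $a,U_0\vda b,J_0$ does yield the three bullets, the cut rule handles transitivity exactly as you describe, and the two distributivity rules give compatibility of $\preceq$ with $\vu$ and $\vi$, so the quotient of $T$ by $\preceq\cap\succeq$ is a distributive lattice through which $T\to T'$ factors, forcing $\leq_{T'}\subseteq\preceq$; the converse inclusion by applying the canonical surjection is immediate. Your route is, however, not the one the paper intends: the paper presents this statement as a corollary of Proposition \ref{propIdeal}, i.e.\ one first forms $T/(J= 0)$, where $a\leq b$ becomes $a\leq b\vu\Vu J_0$ for some finite $J_0\subseteq J$, then imposes $U= 1$ on that quotient by the dual statement, and unfolds the two one-sided descriptions into the single certificate $a,U_0\vda b,J_0$. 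That two-step derivation is shorter once \ref{propIdeal} is granted, while your direct construction is self-contained, subsumes \ref{propIdeal} as the case $U= \emptyset$ (as you note), and is essentially the same congruence-plus-universal-property technique the paper deploys later in the fundamental theorem on entailment relations (Theorem \ref{thEntRel1}), so it makes the role of the cut rule explicit at this earlier stage. One cosmetic point: for the second bullet the relevant reduction is that $1,U_0\vda b,J_0$ is equivalent to $U_0\vda b,J_0$ because $1\vi\Vi U_0= \Vi U_0$; the remark you cite about $\emptyset\vda\{x\}$ forcing $x= 1$ is not what is used there.
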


We shall write $T/(J= 0,U= 1)$  the quotient lattice $T'$ described in
proposition \ref{propIdealFiltre}. Let $\psi:T\rightarrow T'$ be the
canonical surjection. If  $I$ is the ideal $\psi^{-1}(0)$ and $F$ the
filter $\psi^{-1}(1)$, we say that the {\em ideal $I$ and the filter $F$
are conjugate}. By the previous proposition, an ideal $I$ and
a filter $F$ are conjugate \ssi we have:
$$\begin{array}{cl}
 \left[ x\in T,\,I_0\in\Pf(I),\, F_0\in\Pf(F), \;
  (x,\; F_0 \vda I_0)\right]\;\Longrightarrow\;  x\in I& \quad{\rm
  and}\
\\
\left[x\in T,\,I_0\in\Pf(I),\, F_0\in\Pf(F), \;(F_0 \vda x,\;  I_0)
\right]\;\Longrightarrow\;  x\in F.
\end{array}$$
This can also be formulated as follows:
$$
(f\in F,\; x\vi f \in I) \Longrightarrow x\in I
\quad {\rm and}\quad
(j\in I,\; x\vu j \in F) \Longrightarrow x\in F.
$$
When an ideal $I$ and a filter $F$ are conjugate, we have
$$
1\in I\; \;\Longleftrightarrow\;\;  0\in F
\;\; \Longleftrightarrow\;\;  (I,F)= (T,T).
$$
We shall also write
$T'=  T/(J= 0,U= 1)$ as $T/(I,F)$.
By proposition
 \ref{propIdealFiltre}, an \homo $\varphi$  from
$T$ to another lattice $T_1$ satisfying
$\varphi(J)= \{0\}$ and $\varphi(U)= \{1\}$ can be factorised in an
unique way through the quotient $T'$.

As shown by the example of totally ordered sets
a quotient of distributive lattices is not in general
characterised by the equivalence classes of $0$ and $1$.

\ms Classically a {\em prime ideal} $I$ of a lattice
is an ideal whose complement $F$ is a filter (which is then
a {\em prime filter}). This can be expressed by
$$ 1\notin I\qquad {\rm and}\qquad (x\vi y)\in I\; \Longrightarrow \;
(x\in I{\rm \; or\; }y\in I)\qquad\qquad(*)
$$
which can also be expressed by saying that $I$ is the kernel
of a morphism from $T$ into the lattice with two elements
written $\Deux$.
Constructively it seems natural to take the definition $(*)$,
where ``or'' is used constructively. The notion of prime filter
is then defined
 in a dual way.

 The {\em spectrum} of the lattice $T$, written  $\Spec(T)$ is defined
as the set $\Hom(T,\Deux)$. It is isomorphic to the ordered set of
all detachables prime ideals. The order relation is then
reverse inclusion.
We have $\Spec(\Deux)\simeq \Un$,  $\Spec(\Trois)\simeq \Deux$,
$\Spec(\Quatre)\simeq \Trois$, etc\ldots

\begin{definition}
\label{defProiT} Let $T$ be a distributive lattice.
\begin{itemize}
\item An {\em \proiz} in  $T$ is given by a pair $(J,U)$ of
subsets of $T$. We consider this as an incomplete specification
for a prime ideal $P$ satisfying $J\subseteq P$ and $U\cap 
P= \emptyset$.
It is {\em finite} iff $J$ and $U$ are finite, and {\em trivial}
iff $J= U= T$.
\item  An \proi $(J,U)$  is {\em saturated} iff $J$ is an ideal, $U$
a filter and $J$ and $U$ are conjugate. Any \proi generates a saturated 
\proi
$(I,F)$ as described in proposition \ref{propIdealFiltre}.
\item We say that the \proi $(J,U)$ {\em collapses}
iff the saturated \proi $(I,F)$ it generates is trivial.
This means that the quotient lattice $T'= T/(J= 0,U= 1)$ is a 
singleton
\cad  $1 \leq_{T'}0$, which means that there is a finite subset
$J_0$ of $J$ and a finite subset $U_0$ of $U$ such that
$$ U_0 \vda J_0.
$$
\end{itemize}
\end{definition}
We have the following theorem, similar to theorem \ref{ThColSim1}.

\begin{theorem}
\label{lemColSimT1} {\em (Simultaneous collapse for \proisz)} Let
$(J,U)$ be an \proi for a lattice $T$ and $x$ be an element of $T$.
\begin{itemize}
\item [$(1)$] If the \prois $(J\cup\{x\},U)$ and
$(J,U\cup\{x\})$  collapse, then so does $(J,U)$.
\item [$(2)$] The \proi  $(J,U)$ generates a minimum saturated \proiz.
We get it by adding in
$U$ (resp. $J$) any $x\in A$ such that the \proi
 $(J\cup\{x\},U)$
(resp. $(J,U\cup\{x\})$) collapses.
\end{itemize}
\end{theorem}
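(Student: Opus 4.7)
The plan is to mimic the ring-theoretic proof of Theorem \ref{ThColSim1}, replacing the algebraic Rabinovitch trick by the cut rule of the entailment relation $\vda$, which is readily available in the lattice setting.

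For part $(1)$, suppose both $(J\cup\{x\},U)$ and $(J,U\cup\{x\})$ collapse. By Definition \ref{defProiT} this gives finite $J_0\subseteq J$, $U_0\subseteq U$ with $U_0\vda J_0,x$ and finite $J_1\subseteq J$, $U_1\subseteq U$ with $U_1,x\vda J_1$. Monotonicity enlarges these derivations to $U_0,U_1\vda J_0,J_1,x$ and $U_0,U_1,x\vda J_0,J_1$, and the cut rule with cut formula $x$ then yields $U_0,U_1\vda J_0,J_1$, which witnesses the collapse of $(J,U)$.

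For part $(2)$, I would define $K$ to consist of the $x\in T$ such that $(J,U\cup\{x\})$ collapses, and $S$ to consist of the $x\in T$ such that $(J\cup\{x\},U)$ collapses. Three things must be checked: that $(K,S)$ refines $(J,U)$, that it is saturated, and that it is contained in every saturated refinement of $(J,U)$. The refinement is immediate from reflexivity $\{x\}\vda\{x\}$. For the minimality, let $(I,F)$ be any saturated refinement of $(J,U)$: if $x\in K$ is witnessed by $U_0,x\vda J_0$ with $J_0\subseteq J\subseteq I$ and $U_0\subseteq U\subseteq F$, then $x\vi\Vi U_0\leq\Vu J_0\in I$ while $\Vi U_0\in F$, so the conjugation of $I$ and $F$ forces $x\in I$; the dual argument yields $S\subseteq F$.

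The saturation of $(K,S)$ is where $(1)$ plays the role the Rabinovitch trick plays in Theorem \ref{ThColSim1}. For example, to show that $K$ is closed under $\vu$, given $a,b\in K$ I would apply $(1)$ to the target \proi $(J,U\cup\{a\vu b\})$ with cut element $a$: the refinement $(J,U\cup\{a\vu b,a\})$ collapses because it already refines $(J,U\cup\{a\})$, and $(J\cup\{a\},U\cup\{a\vu b\})$ collapses because the witness $U_1,b\vda J_1$ of $b\in K$, combined with the trivial entailment $U_1,a\vda J_1,a$, yields $U_1,a\vu b\vda J_1,a$ by the left $\vu$-distributivity rule. Downward closure of $K$ and $0\in K$ (via $\{0\}\vda\emptyset$, i.e.\ $0\leq 0$) are easier, and the filter axioms for $S$ are dual. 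For the conjugation property, to derive $x\in K$ from $f\in S$ and $x\vi f\in K$, I would rewrite the collapse $U_1,x\vi f\vda J_1$ as $U_1,x,f\vda J_1$ (these inequalities coincide) and cut it against the witness $U_0\vda J_0,f$ of $f\in S$, on the formula $f$, obtaining $U_0,U_1,x\vda J_0,J_1$, i.e.\ $x\in K$; the dual rule is symmetric. The main obstacle is only the bookkeeping of choosing the right refinement to which $(1)$ is applied in each saturation check; the actual content is a single application of the cut rule in each case, exactly paralleling the single Rabinovitch elimination in the ring-theoretic proof, but expressed in the purely equational language of entailment.
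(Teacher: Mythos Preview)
Your proof of part~(1) is exactly the paper's: enlarge both entailments by monotonicity and then apply the cut rule on $x$. For part~(2) your argument is correct, but the paper does not re-prove it here: it simply observes that the description of the saturated pair $(K,S)$ in terms of collapse conditions coincides with the equivalence classes of $0$ and $1$ in the quotient lattice $T/(J=0,U=1)$ already computed in Proposition~\ref{propIdealFiltre}, so the ideal, filter, and conjugation properties come for free from that earlier result. Your route---verifying each saturation axiom by hand via repeated applications of~(1), mirroring the ring-theoretic Theorem~\ref{ThColSim1}---is a legitimate alternative that makes the parallel with the ring case fully explicit, at the cost of redoing work that Proposition~\ref{propIdealFiltre} has already absorbed.
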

\begin{proof}{Proof} Let us prove (1).
  We have two finite subsets $J_0,J_1$ of $J$
and two finite subsets $U_0,U_1$ of $U$ such that
$$ x,\; U_0 \vda J_0\quad{\rm  and}\quad   U_1  \vda x,\; J_1
$$
donc
$$ x,\; U_0,\; U_1 \vda J_0,\; J_1\quad{\rm  and}\quad  U_0,\; U_1 \vda
x,\;J_0,\; J_1
$$
Hence by the cut rule
$$  U_0,\; U_1 \vda J_0,\; J_1
$$
The point (2) has already been proved (in a slightly different 
formulation)
in proposition \ref{propIdealFiltre}.
\end{proof}

Notice the crucial role of the cut rule.

We deduce the following proposition.
\begin{proposition}
\label{propTr2} \Tcgi
If $(J,U)$ is an \proi which does not collapse
then there exists
$\varphi\in\Spec(T)$
such that $J\subseteq \varphi^{-1}(0)$  and
$U\subseteq \varphi^{-1}(1)$.
In particular if $a\not\leq b$, there exists $\varphi\in\Spec(T)$ such 
that
$\varphi(a)= 1$ and $\varphi(b)= 0$. Also, if $T\neq \Un$,
$\Spec(T)$ is non empty.
\end{proposition}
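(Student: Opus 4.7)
The plan is to mirror the compactness-based proof of Theorem \ref{th.nstformel}. I would introduce a propositional theory whose intended models are exactly the morphisms $\varphi \in \Spec(T)$ with $J \subseteq \varphi^{-1}(0)$ and $U \subseteq \varphi^{-1}(1)$, show that it is formally consistent using the simultaneous collapse theorem \ref{lemColSimT1}, and then invoke the \tcgz.

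Concretely, for each $x \in T$ introduce an atomic proposition $p_x$ meant to assert ``$x \in \varphi^{-1}(0)$''. The axioms are $\neg p_1$ and $p_0$; compatibility with joins, $p_{x \vu y} \leftrightarrow (p_x \wedge p_y)$; the primeness axiom $p_{x \vi y} \leftrightarrow (p_x \vee p_y)$; and finally $p_x$ for every $x \in J$ together with $\neg p_x$ for every $x \in U$. A boolean model yields a prime ideal $P := \{x : p_x\}$, which is the kernel of a morphism $\varphi : T \to \Deux$ with $J \subseteq \varphi^{-1}(0)$ and $U \subseteq \varphi^{-1}(1)$, as required.

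To establish consistency, suppose for contradiction that $\bot$ is derivable. Such a derivation uses only finitely many axioms, hence involves finitely many elements $x_1, \ldots, x_n$ of $T$ and finite subsets $J_0 \subseteq J$, $U_0 \subseteq U$. By case analysis on the truth values of $p_{x_1}, \ldots, p_{x_n}$, such a derivation translates into the assertion that, for every pair $(H, H')$ of complementary subsets of $\{1, \ldots, n\}$, the refined \proi $(J_0 \cup \{x_h : h \in H\},\, U_0 \cup \{x_h : h \in H'\})$ collapses in $T$ (i.e.\ yields the appropriate relation $\vda$). Iterating Theorem \ref{lemColSimT1}(1) on $x_1, \ldots, x_n$ in turn shows that $(J_0, U_0)$ itself collapses, whence so does $(J,U)$, contradicting the hypothesis. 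The \tcg then yields a model and thus the required $\varphi$.

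The two corollaries follow immediately. If $a \not\leq b$, the \proi $(\{b\}, \{a\})$ does not collapse, since a collapse would read $a \vda b$ in $T$, i.e.\ $a \leq b$; the main statement then produces $\varphi$ with $\varphi(a) = 1$ and $\varphi(b) = 0$. If $T \neq \Un$, the trivial \proi $(\emptyset, \emptyset)$ does not collapse, for a collapse would give $\emptyset \vda \emptyset$, i.e.\ $1 \leq 0$, forcing $T = \Un$; hence $\Spec(T) \neq \emptyset$. The main obstacle I anticipate is the careful extraction of simultaneous-collapse data from a propositional derivation of $\bot$ — essentially a Lindenbaum-style bookkeeping argument adapted to the lattice setting — but once this bridge is in place, both the consistency proof and the corollaries are routine.
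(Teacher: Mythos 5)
Your proposal is correct and follows essentially the route the paper intends: Proposition \ref{propTr2} is deduced from the simultaneous collapse theorem \ref{lemColSimT1} via the \tcgz, exactly as in the paper's compactness-based proof of theorem \ref{th.nstformel} (propositional theory describing the prime ideal, formal consistency extracted from simultaneous collapse by case analysis on the finitely many atoms in a putative derivation of absurdity, then a model from compactness). The bookkeeping step you flag does go through by checking that every falsified axiom yields a relation $U_0\vda J_0$ for the corresponding refined \proiz, which is the same level of detail the paper itself leaves implicit.
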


A corollary is the following representation theorem
(Birkhoff theorem)
\begin{theorem}
\label{thRep} {\em (Representation theorem)}
\Tcgi The map
$\theta_T: T\rightarrow \cP(\Spec(T))$ defined by
$a\mapsto \left\{\varphi\in \Spec(T)\; ;\; \varphi(a)= 1 \right\}$
is an injective map of distributive lattice. This means that any 
distributive
lattice can be represented as a lattice of subsets of a set.
\end{theorem}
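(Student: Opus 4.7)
{Proof plan}
The plan is to reduce the statement to Proposition~\ref{propTr2}, after first verifying that $\theta_T$ is indeed a lattice morphism. For the morphism part, I would unfold the definition: for each $\varphi\in\Spec(T)=\Hom(T,\Deux)$, $\varphi$ preserves $0,1,\vu,\vi$, and inside $\Deux$ one has $x\vu y=1$ \ssi $x=1$ or $y=1$, and $x\vi y=1$ \ssi $x=1$ and $y=1$. Applied pointwise over $\varphi$, this gives $\theta_T(0)=\emptyset$, $\theta_T(1)=\Spec(T)$, $\theta_T(a\vu b)=\theta_T(a)\cup\theta_T(b)$ and $\theta_T(a\vi b)=\theta_T(a)\cap\theta_T(b)$. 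This is routine and uses nothing beyond the definition of $\Spec(T)$.

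For injectivity, I would argue contrapositively: suppose $a\neq b$ in $T$. Then either $a\not\leq b$ or $b\not\leq a$; by symmetry, assume $a\not\leq b$. I would apply the second sentence of Proposition~\ref{propTr2} to produce a $\varphi\in\Spec(T)$ with $\varphi(a)=1$ and $\varphi(b)=0$. Such a $\varphi$ witnesses $\varphi\in\theta_T(a)\setminus\theta_T(b)$, so $\theta_T(a)\neq\theta_T(b)$. This shows $\theta_T$ is injective, completing the proof.

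The only nontrivial ingredient is Proposition~\ref{propTr2}, which itself rests on the simultaneous collapse theorem \ref{lemColSimT1} together with the \tcgz: applied to the \proi $(\gen{b},\{a\})$ (which does not collapse, since $a\not\leq b$ in $T$ means $\{a\}\not\vdash_T\{b\}$ up to enlarging $\{b\}$ to a finite subset of $\gen{b}$), it provides the desired prime filter/prime ideal. The main obstacle of the proof is therefore hidden inside Proposition~\ref{propTr2}: the non-constructive step of extracting a point of the spectrum from the non-collapse of a \proiz. Once that is granted, the representation theorem itself is immediate, and one could in fact phrase the conclusion as: the spectrum functor is rich enough to separate the points of~$T$, so in particular $T\neq\Un$ forces $\Spec(T)\neq\emptyset$ (take $a=1$, $b=0$).
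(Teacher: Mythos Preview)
Your proposal is correct and matches the paper's approach: the paper presents Theorem~\ref{thRep} simply as a corollary of Proposition~\ref{propTr2}, and your argument does exactly this, first checking the routine morphism properties and then using the separation statement in Proposition~\ref{propTr2} for injectivity. One cosmetic remark: instead of the case split ``$a\not\leq b$ or $b\not\leq a$'', the paper (in the proof of Proposition~\ref{propRep2}) uses the equivalent reduction $a\neq b \Leftrightarrow a\vu b\not\leq a\vi b$, which avoids the symmetry appeal---but since the theorem is already non-constructive via the \tcgz, your version is equally valid.
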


Another corollary is the following proposition.
\begin{proposition}
\label{propRep2}
\Tcgi Let $\varphi:T\rightarrow T'$ a map of \trdisz; $\varphi$
is injective \ssi $\Spec(\varphi):\Spec(T')\rightarrow \Spec(T)$ is
surjective.
\end{proposition}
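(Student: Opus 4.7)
The plan is to treat the two implications separately, both via Proposition \ref{propTr2} (which is where the compactness theorem enters).

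For the easy direction ($\Spec(\varphi)$ surjective $\Rightarrow$ $\varphi$ injective), I would argue by contraposition. Suppose $\varphi$ is not injective, so by antisymmetry there exist $a,b \in T$ with, say, $a \not\leq b$ yet $\varphi(a) \leq \varphi(b)$. By Proposition \ref{propTr2} applied to the \proi $(\{b\},\{a\})$ (which does not collapse since $\{a\} \vdash_T \{b\}$ fails), there is $\psi \in \Spec(T)$ with $\psi(a)=1$ and $\psi(b)=0$. If $\Spec(\varphi)$ were surjective, we could write $\psi = \psi' \circ \varphi$ for some $\psi' \in \Spec(T')$, but then $1 = \psi(a) = \psi'(\varphi(a)) \leq \psi'(\varphi(b)) = \psi(b) = 0$, a contradiction.

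For the harder direction ($\varphi$ injective $\Rightarrow$ $\Spec(\varphi)$ surjective), fix $\psi \in \Spec(T)$ and set $J := \varphi(\psi^{-1}(0))$, $U := \varphi(\psi^{-1}(1))$. I want to show the \proi $(J,U)$ in $T'$ does not collapse; then Proposition \ref{propTr2} yields $\psi' \in \Spec(T')$ with $J \subseteq \psi'^{-1}(0)$ and $U \subseteq \psi'^{-1}(1)$, which is exactly the statement $\psi' \circ \varphi = \psi$, i.e.\ $\Spec(\varphi)(\psi') = \psi$. The first ingredient is that an injective lattice morphism reflects the order: if $\varphi(x)\leq \varphi(y)$ then $\varphi(x\vu y) = \varphi(x)\vu\varphi(y) = \varphi(y)$, so by injectivity $x\vu y = y$, i.e.\ $x\leq y$.

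Using this, suppose $(J,U)$ collapses in $T'$: there are finite $U_0' \subseteq \psi^{-1}(1)$ and $J_0' \subseteq \psi^{-1}(0)$ such that $\varphi(U_0') \vdash_{T'} \varphi(J_0')$, i.e.\ $\varphi(\Vi U_0') \leq \varphi(\Vu J_0')$. By order reflection, $\Vi U_0' \leq \Vu J_0'$ in $T$, so $U_0' \vdash_T J_0'$, meaning $(\psi^{-1}(0),\psi^{-1}(1))$ collapses in $T$. But this \proi is already saturated (being the kernel/cokernel pair of $\psi \in \Spec(T)$) and nontrivial, contradiction. Hence $(J,U)$ does not collapse, and the argument is complete.

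The main obstacle is the forward direction, specifically the little lemma that an injective morphism of distributive lattices reflects the order; once this is noticed, the proof reduces mechanically to an application of Proposition \ref{propTr2}. Note that, as with the representation theorem \ref{thRep}, both directions here are genuinely nonconstructive, since they require extracting a prime from a noncollapsing \proi.
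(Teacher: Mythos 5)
Your proof is correct and follows essentially the same route as the paper: both directions reduce to Proposition \ref{propTr2}, with surjectivity obtained from the non-collapse in $T'$ of the pair $(\psi^{-1}(0),\psi^{-1}(1))$, and injectivity from separating two elements by a point of $\Spec(T)$ and composing. The only difference is presentational: you spell out the order-reflection lemma that the paper compresses into ``identify $T$ with a sublattice of $T'$'', and you argue the easy direction by contraposition where the paper works directly with $a\neq b$ via $a\vu b\not\leq a\vi b$.
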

\begin{proof}{Proof}
We have the equivalence
$$ a\not=  b\quad \Longleftrightarrow\quad  a\vi b\not=  a\vu b\quad
\Longleftrightarrow\quad
a\vu b\not\leq a\vi b
$$
Assume that $\Spec(\varphi)$ is surjective. If $a\not=  b$ in $T$, take
$a'= \varphi(a)$, $b'= \varphi(b)$ and let
  $\psi\in\Spec(T)$ be such that $\psi(a\vu b)= 1$ and
 $\psi(a\vi b)= 0$. Since  $\Spec(\varphi)$ is surjective there exists
$\psi'\in\Spec(T')$ such that $\psi= \psi'\varphi$ hence  $\psi'(a'\vu 
b')= 1$ is
 $\psi'(a'\vi b')= 0$, hence $a'\vu b'\not\leq a'\vi b'$ and $ 
a'\not=  b'$.\\
Suppose that  $\varphi$ is injective. We identify $T$ to a sublattice of
$T'$. If $\psi\in\Spec(T)$, take $I= \psi^{-1}(0)$ and $F= \psi^{-
1}(1)$.
Then $(I,F)$ cannot collapse in $T'$ since it would then collapse in
dans $T$. Hence there exists $\psi'\in\Spec(T')$ such that $\psi'(I)=  
0$ and
 $\psi'(F)=  1$, which means $\psi= \psi'\varphi$.
\end{proof}

Of course, these three last results are hard to interpret in
a computational way. An intuitive interpretation is that we
can proceed ``as if'' any distributive lattice is a lattice of
subsets of a set. The goal of Hilbert's program is to give
a precise meaning to this sentence, and explain  what is meant
by ``as if'' there.

\subsection{Distributive lattices and \entrels}
\label{subsecTrd2}
An interesting way to analyse the description of distributive lattices
defined by generators and relations is to consider the relation
$A \vda B$ defined on the set $\Pf(T)$ of finite subsets of a
lattice $T$.
Indeed if  $S\subseteq T$ generates the lattice $T$,
 then the relation  $\vda$ on $\Pf(S)$ is enough to characterise
the lattice $T$, because any formula on $S$ can be rewritten,
in normal conjunctive form (inf of sups in $S$) and normal
disjonctive form (sup of infs in $S$). Hence if we want to compare
two elements of the lattice generated by $S$ we write the first
in normal disjunctive form, the second in normal conjunctive form,
and we notice that
$$ \Vu_{i\in I}\left(\Vi A_i \right)\; \leq \; \Vi_{j\in J}\left(\Vu B_j
\right)
\qquad \Longleftrightarrow\qquad  \&_{(i,j)\in I\times J}\;  \left(
A_i \vda  B_j\right)
$$

\begin{definition}
\label{defEntrel}
For an arbitrary set $S$, a relation over  $\Pf(S)$ which is
reflexive, monotone and transitive (see page  \pageref{remotr}) is
called an {\em entailment relation.}
\end{definition}

The notion of \entrels goes back to Gentzen sequent calculus, where
the rule $(T)$ (the cut rule) is first explicitly stated, and
plays a key role. The connection with distributive lattices has been
emphasized in \cite{cc,cp}.
The following result (cf. \cite{cc}) is fundamental. It says that the
three properties of entailment relations are exactly the ones needed
in order to have a faithfull interpretation in distributive lattices.

\begin{theorem}
\label{thEntRel1} {\rm  (fundamental theorem of \entrelsz)} Let $S$ be a 
set
with an entailment relation
$\vdash_S$ over $\Pf(S)$. Let $T$ be the lattice defined by generators
and relations as follows: the generators are the elements of $S$ and the
relations are
$$ A\; \vdash_T \;  B
$$
whenever $A\; \vdash_S \; B$. For any finite subsets $A$ and $B$ of $S$ 
we have
$$  A\; \vdash_T \;  B
\; \Longrightarrow \; A\; \vdash_S \;  B.
$$
\end{theorem}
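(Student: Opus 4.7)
The plan is to construct a concrete distributive lattice $L$ from $(S,\vdash_S)$ that realizes the universal property defining $T$, and in which the conservativity $A\vdash_L B\Rightarrow A\vdash_S B$ is visible by construction. Represent elements of $L$ by finite sets $\mathcal{A}\in\Pf(\Pf(S))$, thought of as disjunctive normal forms $\Vu_{A\in\mathcal{A}}\Vi_{a\in A}a$, and define the preorder
\[
\mathcal{A}\preceq\mathcal{B}\;\Longleftrightarrow\; \forall A\in\mathcal{A},\;\forall f\in\prod_{B\in\mathcal{B}}B,\; A\vdash_S\{f(B):B\in\mathcal{B}\}.
\]
The motivation is that $\Vu_{B\in\mathcal{B}}\Vi B=\Vi_{f}\Vu_{B}f(B)$ by distributivity, so the intended inequality reduces to a conjunction of flat entailments on $\Pf(S)$. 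Let $L$ be the quotient of $\Pf(\Pf(S))$ by the equivalence induced by $\preceq$.

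Reflexivity of $\preceq$ is immediate from $(R)$ and $(M)$. The main technical point, and the main obstacle, is transitivity: given $\mathcal{A}\preceq\mathcal{B}\preceq\mathcal{C}$, fix $A\in\mathcal{A}$ and $g\in\prod_{C\in\mathcal{C}}C$, and set $G=\{g(C):C\in\mathcal{C}\}$. Then $B\vdash_S G$ for every $B\in\mathcal{B}$, while $A\vdash_S\{f(B):B\in\mathcal{B}\}$ for every $f$. One derives $A\vdash_S G$ by induction on $|\mathcal{B}|$: the key single-set case $\mathcal{B}=\{\{b_1,\ldots,b_r\}\}$ is handled by an $r$-fold nested application of the cut rule $(T)$, cutting each $b_i$ in turn out of the entailment $b_1,\ldots,b_r\vdash_S G$ against the entailment $A\vdash_S b_i$ (supplied by the choice functions sending $B$ to $b_i$), monotonicity being used to align contexts; the inductive step reduces the number of sets in $\mathcal{B}$ by one by applying the same device to a single $B_0\in\mathcal{B}$, with $G$ augmented by a fixed choice on the remaining coordinates. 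On the quotient, the join is $\mathcal{A}\cup\mathcal{B}$ and the meet is $\{A\cup B:A\in\mathcal{A},B\in\mathcal{B}\}$; the lattice axioms and distributivity are then routine from the explicit DNF shape.

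It remains to match $L$ with the syntactic lattice $T$. The singleton map $\iota:S\to L$, $s\mapsto\{\{s\}\}$, enjoys the crucial property that for $A,B\in\Pf(S)$ one has $\{A\}\preceq\{\{b\}:b\in B\}$ if and only if $A\vdash_S B$, because the right-hand side admits a unique choice function, which sends $\{b\}$ to $b$. In particular every relation $A\vdash_S B$ becomes a lattice inequality in $L$. For the universal property, any $\phi:S\to T'$ into a distributive lattice satisfying $A\vdash_S B\Rightarrow \Vi_{a\in A}\phi(a)\leq\Vu_{b\in B}\phi(b)$ extends uniquely to a morphism $\widehat{\phi}:L\to T'$ by $\mathcal{A}\mapsto\Vu_{A\in\mathcal{A}}\Vi_{a\in A}\phi(a)$; the definition of $\preceq$ is arranged so that, after applying $\phi$ and using distributivity in $T'$, the required inequality $\widehat{\phi}(\mathcal{A})\leq\widehat{\phi}(\mathcal{B})$ follows, and uniqueness is forced by the normal form. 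Hence the canonical map $T\to L$ is an isomorphism. Finally, if $A\vdash_T B$ holds in the syntactic lattice, then its image in $L$ is precisely $\{A\}\preceq\{\{b\}:b\in B\}$, which by the property of $\iota$ is exactly $A\vdash_S B$.
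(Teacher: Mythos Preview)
Your proof is correct and follows essentially the same strategy as the paper's: both build the lattice explicitly on $\Pf(\Pf(S))$, interpreting an element $\mathcal{A}=\{A_1,\dots,A_n\}$ as the disjunctive normal form $\Vu_i\Vi A_i$, with join $\cup$ and meet $\{A\cup B:A\in\mathcal{A},\,B\in\mathcal{B}\}$. The one genuine difference is in how the preorder is defined. The paper gives an \emph{inductive} definition of $A\prec Y$ (base case $B\subseteq A$ for some $B\in Y$; inductive clause: if $A\vdash_S y_1,\dots,y_m$ and each $A,y_j\prec Y$, then $A\prec Y$), and then proves both transitivity and conservativity by induction on derivations. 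You instead give a \emph{closed-form} definition via choice functions (the CNF expansion of the right-hand side), which makes conservativity over $S$ immediate---for $\mathcal{B}=\{\{b\}:b\in B\}$ there is a unique choice function---at the cost of shifting all the work into the transitivity proof, which you carry out by induction on $|\mathcal{B}|$ using iterated cuts. In both arguments the cut rule $(T)$ does the essential work, via the same basic lemma: from $A\vdash_S y_1,\dots,y_m$ and $A,y_j\vdash_S B$ for each $j$ one gets $A\vdash_S B$ by $m$ cuts. Your variant is arguably a bit cleaner conceptually, since the preorder is defined directly rather than generated, and the universal property and the extension $\widehat{\phi}$ fall out transparently from distributivity in the target; the paper's inductive version keeps the combinatorics of transitivity slightly lighter. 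Either way, the ``routine'' verification that $\wedge$ is the infimum and that absorption and idempotence hold up to $\preceq$ does require a few lines (e.g.\ $\mathcal{A}\wedge\mathcal{A}\approx\mathcal{A}$ is not a literal equality), but these are indeed straightforward once transitivity is in hand.
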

\begin{proof}{Proof}
We give an explicit possible description of the lattice $T$. The 
elements
of $T$ are represented by finite sets of finite sets of elements of $S$
$$X= \{A_1,\dots,A_n\}$$
(intuitively $X$ represents $\Vi A_1\vu\cdots\vu\Vi A_n$).
We define then inductively the relation
 $A\prec Y$ with $A\in \Pf(S)$ and $Y\in T$
(intuitively $\Vi A\leq \Vu_{C\in Y} \left(\Vi C\right) $)
\begin{itemize}
\item if $B\in Y$ and $B\subseteq A$ then $A\prec Y$
\item if $A\vdash_S y_1,\dots,y_m$ and $A,y_j\prec Y$ for
$j= 1,\ldots,m$ then $A\prec Y$
\end{itemize}
It is easy to show that if
 $A\prec Y$ and $A\subseteq A'$ then we have
also $A'\prec Y.$ It follows that  $A\prec Z$ holds
whenever $A\prec Y$ and $B\prec Z$
for all $B\in Y$. We can then define $X\leq Y$ by
$A\prec Y$ for all $A\in X$ and one can then check that
$T$ is a distributive lattice{\footnote{~$T$ is actually the quotient of
$\Pf(\Pf(S))$  by the equivalence relation: $X\leq Y$ and $Y\leq X$.}}
for the operations
$$0 =  \emptyset,~~~~1 =  \{\emptyset\},~~~~~X\vee Y =  X\cup Y,~~~~~
X\wedge
Y =  \{ A \cup B~|~A\in X,~B\in Y\}.
$$
For establishing this one first show that if
$C\prec X$ and $C\prec Y$ we have
$C\prec X\vi Y$ by induction on the proofs of
$C\prec X$ and $C\prec Y$.
We notice then that if
 $A\vdash_S y_1,\dots,y_m$ and $A,y_j\vdash_S B$
for all $j$ then $A\vdash_S B$ using $m$ times the cut rule. 
It follows that if we have
$A\vdash_T B$, \cad
$A\prec \{\{b\}~|~b\in B\}$, then we have also $A\vdash_S B$.
\end{proof}

As a first application, we give the description of the Boolean
algebra generated by a distributive lattice.
A Boolean algebra can be seen as a distributive lattice with
a complement operation $x\mapsto \overline{x}$ such that
$x\vi \overline{x}= 0$ and $x\vu \overline{x}= 1$.
The application $x\mapsto \overline{x}$ is then a map from the lattice
to its dual.
\begin{proposition}
\label{propTrBoo}
Let $T$ be a distributive lattice. There exists a free
Boolean algebra generated by $T$. It can be described as
the distributive lattice generated by the set
$T_1= T\cup\overline{T}$
{\rm (}{\footnote{~$\overline{T}$  is a disjoint copy of $T$.}}{\rm )}
with the \entrel $\;\vdash_{T_1}\;$
defined as follows:
if $A,B,A',B'$  are finite subsets of $T$ we have
$$A,\overline{B}\;\vdash_{T_1}\; A',\overline{B'}\equidef A,B'\vda
A',B\quad
{\rm in} \;  T
$$
If we write $T_{Bool}$ this lattice (which is a Boolean algebra), there 
is
a natural embedding of $T_1$  in $T_{Bool}$
and the entailment relation of $T_{Bool}$
induces on  $T_1$ the relation  $\,\vdash_{T_1}\,$.
\end{proposition}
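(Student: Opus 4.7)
The plan is to apply Theorem \ref{thEntRel1} to the set $T_1=T\cup\overline{T}$ equipped with the proposed relation $\vdash_{T_1}$, then check that the resulting distributive lattice is a Boolean algebra with the expected universal property.

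First I verify that $\vdash_{T_1}$ is an \entrel on $\Pf(T_1)$. Reflexivity ($a\vdash_{T_1}a$ for $a\in T$ and $\overline{a}\vdash_{T_1}\overline{a}$) and monotonicity are immediate from the corresponding properties of $\vdash_T$. For the cut rule, suppose $X,y\vdash_{T_1} Z$ and $X\vdash_{T_1} Z,y$, where $X=A\cup\overline{B}$ and $Z=A'\cup\overline{B'}$ with $A,A',B,B'\in\Pf(T)$. Two cases arise. If $y\in T$, the two hypotheses unfold to $A,y,B'\vdash_T A',B$ and $A,B'\vdash_T A',y,B$; the cut rule in $T$ yields $A,B'\vdash_T A',B$, i.e.\ $X\vdash_{T_1} Z$. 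If $y=\overline{c}$ with $c\in T$, they unfold to $A,B'\vdash_T A',B,c$ and $A,c,B'\vdash_T A',B$, and again cut in $T$ gives the conclusion.

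Second, Theorem \ref{thEntRel1} then produces a distributive lattice $T_{Bool}$ generated by $T_1$ such that, for finite $X,Y\subseteq T_1$, $X\vdash_{T_{Bool}} Y$ holds \ssi $X\vdash_{T_1} Y$. In particular the canonical map $T_1\to T_{Bool}$ is injective and its restriction to $T$ is a lattice homomorphism, since for $A,B\in\Pf(T)$ the condition $A\vdash_{T_1}B$ reduces to $A\vdash_T B$. Now taking $A=B'=\{a\}$, $A'=B=\emptyset$ gives $a,\overline{a}\vdash_{T_1}\emptyset$, i.e.\ $a\vi\overline{a}=0$; taking $A=B'=\emptyset$, $A'=B=\{a\}$ gives $\emptyset\vdash_{T_1}a,\overline{a}$, i.e.\ $a\vu\overline{a}=1$. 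Thus $\overline{a}$ is a complement of $a$ in $T_{Bool}$. Since $T_{Bool}$ is generated by $T_1$ and each generator has a complement, iterated use of De~Morgan's laws (the complement of $\Vu_i\Vi_j x_{ij}$ is $\Vi_i\Vu_j\overline{x_{ij}}$) produces a complement for every element, so $T_{Bool}$ is a Boolean algebra.

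Third, for the universal property, given a lattice morphism $f:T\to B$ into a Boolean algebra $B$, extend $f$ to $T_1$ by setting $\hat f(\overline{a})=\neg f(a)$. The extension respects $\vdash_{T_1}$: whenever $A,B'\vdash_T A',B$ in $T$, the inequality $\Vi f(A)\vi\Vi f(B')\leq\Vu f(A')\vu\Vu f(B)$ in $B$ is equivalent, by standard Boolean manipulations (moving $\Vi f(B')$ to the right as $\Vu\neg f(B')$ and $\Vu f(B)$ to the left as $\Vi\neg f(B)$), to $\Vi f(A)\vi\Vi\neg f(B)\leq\Vu f(A')\vu\Vu\neg f(B')$, which is exactly the statement $A,\overline{B}\vdash A',\overline{B'}$ interpreted in $B$. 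By the universal property of the lattice defined by generators and relations, $\hat f$ factors uniquely through $T_{Bool}$; uniqueness is clear as $T_1$ generates $T_{Bool}$ and the value on $\overline{a}$ is forced. The main delicate point is the case split in the cut rule, together with the Boolean manipulation justifying the extension; everything else follows directly from Theorem \ref{thEntRel1}.
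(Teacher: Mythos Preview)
Your proof is correct and complete. The paper itself does not give a proof here; it simply writes ``See \cite{cc}'', so you have supplied the details that the paper omits. Your approach---verify that $\vdash_{T_1}$ is an \entrelz, invoke Theorem~\ref{thEntRel1}, check complements on generators, then verify the universal property---is exactly the natural route and is presumably what \cite{cc} does as well.

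One very minor remark: your aside that ``the canonical map $T_1\to T_{Bool}$ is injective'' is slightly too strong. If $a\in T$ already has a complement $b$ in $T$, then $a\vi b=0$ and $a\vu b=1$ give $a\vdash_{T_1}\overline{b}$ and $\overline{b}\vdash_{T_1}a$, so $a$ and $\overline{b}$ become identified in $T_{Bool}$. What Theorem~\ref{thEntRel1} actually guarantees (and what the proposition asserts) is that the entailment relation of $T_{Bool}$ restricts to $\vdash_{T_1}$ on $T_1$; from this it does follow that $T\to T_{Bool}$ is an injective lattice morphism, which is all you use. This does not affect your argument.
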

\begin{proof}{Proof}
See \cite{cc}.
\end{proof}
Notice that by theorem \ref{thEntRel1} we have
$x\;\vdash_{T}\; y$ \ssi $x\;\vdash_{T_1}\; y$ hence the canonical
map $T\rightarrow T_1$ is one-to-one and $T$
can be identified to a subset of $T_1$.

\subsection{Krull dimension of \trdisz}
\label{subsecTrd3}
To develop a suitable constructive theory of the Krull dimension
of a distributive lattice we have to find a constructive counterpart of
the notion of increasing chains of prime ideals.

One can do it along the same lines as what has been done for
commutative rings in section \ref{secKrA}, or else
use an idea due to Joyal. It consists in building an universal
lattice $\Kr_\ell(T)$ associated to $T$ such that the points of
 $\Spec(\Kr_\ell(T))$  are (in a natural way) the chains of prime
ideals of length $\ell$.
We shall present the two descriptions and establish their
equivalence.
\subsubsection*{Partially specified chains of prime ideals}
\begin{definition}
\label{defprochTreil} In a \trdi $T$
\begin{itemize}
\item  A {\em partial specification for a chain of prime ideals}
(that we shall call {\em \procz}) is defined as follows.
An \prolo $\ell$ is a list of $\ell+1$ \prois of $T$:
$\cC= ((J_0,U_0),\ldots,(J_\ell,U_\ell))$. The \proc is {\em finite}
iff all the subsets are finite. An \prolo $0$ is nothing but an
{\em \proiz}.
\item  An \proc is {\em saturated} \ssi all the $J_i$ and $U_i$ are
conjugate, and if we have furthermore
$J_i\subseteq J_{i+1}$, $U_{i+1}\subseteq U_i$
$(i= 0,\ldots,\ell-1)$.
\item  An \proc $\cC'= ((J'_0,U'_0),\ldots,(J'_\ell,U'_\ell))$
is a {\em refinenement of the \proc}
$\cC= ((J_0,U_0),\ldots,(J_\ell,U_\ell))$
\ssi
$J_k\subseteq J'_k$, $U_k\subseteq U'_k$,
\item  An \proc $\cC$ {\em collapses}
\ssi the only saturated \proc that refines $\cC$ is the trivial \proc
$((T,T),\ldots,(T,T))$.
\end{itemize}
\end{definition}
\begin{lemma}
\label{lemColTr}
An \proc $\cC= ((J_0,U_0),\ldots,(J_\ell,U_\ell))$  in which we have
$U'_h\vda J'_h$ with $U'_h\in \Pf(U_h)$ and $J'_h\in \Pf(J_h)$
(in particular if $U_h\cap J_h\not=  \emptyset$)
for some index $h$ collapses.
\end{lemma}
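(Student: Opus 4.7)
The plan is to show directly from the definition that every saturated refinement of $\cC$ must be the trivial chain. So I would start by fixing an arbitrary saturated refinement $\cC'=((I_0,F_0),\ldots,(I_\ell,F_\ell))$ of $\cC$ and aim to prove $(I_i,F_i)=(T,T)$ for every $i$.

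The first step is to collapse the distinguished index $h$. Since $U'_h\subseteq U_h\subseteq F_h$ and filters are closed under finite meets, $\Vi U'_h\in F_h$. Since $J'_h\subseteq J_h\subseteq I_h$ and ideals are closed under finite joins, $\Vu J'_h\in I_h$. The hypothesis $\Vi U'_h\vda \Vu J'_h$ combined with the upward-closure of the filter $F_h$ then gives $\Vu J'_h\in F_h$ as well. Applying the conjugate-pair clause $(f\in F_h,\ x\vi f\in I_h)\Rightarrow x\in I_h$ with $f=\Vu J'_h$ and $x=1$ yields $1\in I_h$, whence $(I_h,F_h)=(T,T)$ by the equivalence recorded just before Definition \ref{defProiT}.

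Next I would propagate this triviality along the rest of $\cC'$ using the monotonicity built into saturation: $I_i\subseteq I_{i+1}$ and $F_{i+1}\subseteq F_i$. From $I_h=T$ and $I_h\subseteq I_{h+1}\subseteq\cdots\subseteq I_\ell$, every ideal $I_i$ with $i\ge h$ equals $T$, and conjugacy forces $F_i=T$. Symmetrically, from $F_h=T$ and $F_h\subseteq F_i$ for $i\le h$ one gets $F_i=T$, and conjugacy gives $I_i=T$. Thus $\cC'$ is the trivial chain, proving that $\cC$ collapses.

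The only nonroutine step is the conjugacy argument extracting $1\in I_h$ from the finite entailment $U'_h\vda J'_h$; everything else is bookkeeping with the orderings of ideals and filters enforced by saturation. The parenthetical special case $U_h\cap J_h\neq\emptyset$ is then immediate: any common element $x$ supplies the trivial entailment $\{x\}\vda\{x\}$ (reflexivity axiom $(R)$) with $\{x\}\in\Pf(U_h)\cap\Pf(J_h)$, so the lemma applies.
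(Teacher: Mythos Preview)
Your proof is correct and follows essentially the same approach as the paper: you take an arbitrary saturated refinement, use the entailment $U'_h\vda J'_h$ together with conjugacy to force $(I_h,F_h)=(T,T)$, and then propagate triviality up and down the chain via the inclusions $I_i\subseteq I_{i+1}$ and $F_{i+1}\subseteq F_i$. You have merely spelled out in more detail the step the paper condenses into ``since the \proi $(I_h,F_h)$ collapses and since $I_h$ and $F_h$ are conjugate, we have $1\in I_h$ and $0\in F_h$.''
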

\begin{proof}{Proof}
Let $((I_0,F_0),\ldots,(I_\ell,F_\ell))$ be a saturated \proc which
is a refinement of $\cC$. Since the
\proi  $(I_h,F_h)$  collapses and since $I_h$ and
$F_h$ are conjugate, we have $1\in  I_h$ and  $0\in  F_h$.
For all index $ j>h$  we thus have $1\in  I_j$ and hence   $0\in  F_j$.
Similarly for all index $ j<h$ we have   $0\in  F_j$  and hence
$1\in  I_j$.
\end{proof}

In the following theorem the points (3) and (2) correspond to the point
(1) and (2) in theorem \ref{ThColSimKrA}.
\begin{theorem}
\label{thColSimT2} {\em (Simultaneous collapse for the \procs in
  \trdisz)} ~\\
Let $\cC= ((J_0,U_0),\ldots,(J_\ell,U_\ell))$ be an \proc in a \trdi
$T.$
\begin{itemize}
\item [$(1)$] The \proc $\cC$ collapses \ssi there exists
  $x_1,\ldots,x_\ell\in T$ and a finite \proc
  $\cC'= ((J'_0,U'_0),\ldots,(J'_\ell,U'_\ell))$ of which $\cC$ is a
  refinement, with the following relations in $T$ (where $\vda$ is
  the \entrel of $T$):
$$\begin{array}{rcl}
 x_1,\;   U'_0& \vda  &  J'_0  \\
 x_2,\;   U'_1& \vda  &  J'_1 ,\;  x_1  \\
\vdots\quad & \vdots  & \quad\vdots  \\
 x_\ell,\;   U'_{\ell-1}& \vda  &  J'_{\ell-1} ,\;  x_{\ell-1}  \\
  U'_\ell& \vda  &  J'_\ell ,\;  x_\ell
\end{array}$$
\item [$(2)$] The \proc $\cC$ generates a minimum saturated \procz.
We get it by adding to
$U_i$ (resp. $J_i$) each element  $a\in A$ such that the \proc
$((J_0,U_0),\ldots,(J_i\cup\{a\},U_i),\ldots,(J_\ell,U_\ell))$
(resp. $((J_0,U_0),\ldots,(J_i,U_i\cup\{a\}),\ldots,(J_\ell,U_\ell))$)
collapses.
\item [$(3)$] Take $x\in T$. Suppose that the \procs
$((J_0,U_0),\ldots,(J_i\cup\{x\},U_i),\ldots,(J_\ell,U_\ell))$
and
$((J_0,U_0),\ldots,(J_i,U_i\cup\{x\}),\ldots,(J_\ell,U_\ell))$
both collapse, then so does $\cC$.
\end{itemize}
\end{theorem}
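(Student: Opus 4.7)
The strategy is to prove (3), (2), and (1) jointly by induction on $\ell$, with base case $\ell = 0$ handled by theorem \ref{lemColSimT1}. The central tool throughout is the cut rule $(T)$ of the entailment relation on $T$, playing a role analogous to Rabinovitch's trick in the ring setting (theorem \ref{ThColSim1}).

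For the inductive step of (3), the goal is to reduce the length-$\ell$ simultaneous collapse to a length-$(\ell-1)$ statement inside a suitable quotient of $T$, followed by theorem \ref{lemColSimT1}(1) at the exposed level. I would first establish the lattice-theoretic analogues of the chain-splitting identities in fact \ref{factSatur}: if $\cC_1$ is a saturated sub-chain ending in a conjugate pair $(\tilde I, \tilde F)$, then $\cC_1 \bullet \cC_2$ collapses in $T$ iff $\cC_2$ collapses in the quotient lattice $T/(\tilde I = 0,\, \tilde F = 1)$ described by proposition \ref{propIdealFiltre}; and dually when $\cC_2$ is complete. Both are immediate from the description of the quotient as an entailment relation plus the definition of collapse. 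With these in hand, the case $i = \ell$ of (3) is handled by saturating the sub-chain $((J_0, U_0), \ldots, (J_{\ell-1}, U_{\ell-1}))$ via (2)$_{\ell-1}$ and invoking theorem \ref{lemColSimT1}(1) on the single \proi $(J_\ell, U_\ell)$ in the resulting quotient; the case $i < \ell$ is handled by splitting $\cC$ around level $i$ and invoking the inductive (3) of smaller length in the appropriate quotient, with the hypothesis transporting verbatim through the splitting.

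Once (3)$_\ell$ is in hand, (2)$_\ell$ follows exactly as in the proof of theorem \ref{lemColSimT1}(2): each saturation axiom for the candidate minimum saturated chain --- closure of each $J_i$ under $\vu$, closure of each $U_i$ under $\vi$, conjugation at each level, and the monotonicity $J_i \subseteq J_{i+1}$, $U_{i+1} \subseteq U_i$ --- is verified by exhibiting a refinement of $\cC$ whose collapse is, by (3), reduced to collapses that either are hypothesised or follow at once from lemma \ref{lemColTr}.

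For (1), the backward implication is a clean cascade of conjugations. Given any saturated refinement $((I_k, F_k))_k$ of $\cC$, a downward induction on $k$ from $\ell$ to $1$ shows $x_k \in F_{k-1}$: the top entailment $U'_\ell \vdash J'_\ell, x_\ell$ together with $\Vi U'_\ell \in F_\ell$ puts $x_\ell \vu (\Vu J'_\ell)$ into $F_\ell$, whence $x_\ell \in F_\ell \subseteq F_{\ell-1}$ by conjugation with $\Vu J'_\ell \in I_\ell$, and the inductive step uses $x_{k+1}, U'_k \vdash J'_k, x_k$ with $x_{k+1} \in F_k$ in the same way. Finally $x_1 \in F_0$ combined with $x_1, U'_0 \vdash J'_0$ places $x_1 \vi (\Vi U'_0)$ into $I_0 \cap F_0$, which forces $I_0 = F_0 = T$ and then, by chain monotonicity, triviality of the entire saturated refinement. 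The forward implication is the substantial content and the main obstacle: starting from the minimum saturated refinement supplied by (2)$_\ell$ (which is trivial by assumption), one must extract the witnesses $x_1, \ldots, x_\ell$ from the derivation establishing $1 \in I_0$. The plan is to proceed by induction on the length of that derivation, introducing a fresh $x_k$ each time it invokes a chain-monotonicity axiom to cross between level $k$ and level $k-1$, and using the cut rule to amalgamate consecutive derivation steps within a single level into the single entailment demanded by the staircase of (1).
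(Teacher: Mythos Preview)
Your plan for $(3)$, $(2)$, and the backward direction of $(1)$ is sound and closely parallels the ring argument of theorem~\ref{ThColSimKrA}, but it diverges from the paper's route. The paper does \emph{not} argue by induction on $\ell$, and it does not prove $(3)$ first. Instead it defines $\cC_1=((I_0,F_0),\ldots,(I_\ell,F_\ell))$ by the staircase criterion itself (so $a\in I_k$ iff there exist $x_1,\ldots,x_\ell$ witnessing the staircase of $(1)$ with $a$ adjoined to $U_k$), and proves directly:
$(\alpha)$ the staircase implies collapse (your cascade);
$(\beta)$ $\cC_1$ is saturated, by explicitly combining witnesses via distributivity --- e.g.\ from witnesses $(x_i)$ for $x\in I_k$ and $(y_i)$ for $y\in I_k$ one builds witnesses $(x_1\vu y_1,\,x_2\vi y_2,\,x_3\vi y_3,\ldots)$ for $x\vu y\in I_k$, and similarly for conjugation;
$(\gamma)$ $\cC_1$ is minimum among saturated refinements;
$(\delta)$ $\cC_1$ trivial forces the staircase for $\cC$ (trivially, since $1\in I_0$ means the staircase holds with the harmless extra $1$ on the left at level~$0$).
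Then $(1)$ and $(2)$ fall out together, and $(3)$ is a two-line corollary: the hypotheses give $x\in I_i\cap F_i$, whence $\cC_1$ collapses by lemma~\ref{lemColTr}.

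The genuine gap in your proposal is the forward direction of $(1)$. Your $(2)_\ell$ produces the minimum saturated refinement as $\cC_1^{\mathrm{coll}}$, defined set-theoretically (``$a\in I_i$ iff adjoining $a$ to $U_i$ collapses''). There is no derivation here on whose length you could induct, and knowing $1\in I_0^{\mathrm{coll}}$ is just a restatement of ``$\cC$ collapses'' --- it does not hand you the $x_k$'s. What is missing is precisely the paper's step $(\beta)$: one must show that the \emph{staircase-defined} chain is itself saturated, so that its triviality (which follows from collapse) yields witnesses for free via $(\delta)$. This is where the explicit distributive combinations of witnesses do real work; your inductive-on-$\ell$ framework does not supply a substitute for it. If you want to stay inductive, the honest fix is to establish the identity $\cC_1^{\mathrm{coll}}=\cC_1^{\mathrm{stair}}$ by proving $(\beta)$ directly, at which point the induction on $\ell$ becomes unnecessary.
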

\begin{proof}{Proof}
Let us begin with the two first points. We can always suppose the
\proc $\cC$ to be finite, for one can always deduce the general
case from this one by looking as the given \proc as an inductive
limit of all the finite \proc of which it is a refinement.
In the case where $\cC$ is finite, we can systematically replace $U'_i$  
by
$U_i$ and  $J'_i$  par $J_i$.
Let $\cC_1= ((I_0,F_0),\ldots,(I_\ell,F_\ell))$ be the \proc
defined in (2). We shall show that
\begin{itemize}
\item [$(\alpha)$] If $\cC$ satisfies the relations (1)
any saturated \proc which refines $\cC$ is trivial (\cad $\cC$
collapses).
\item [$(\beta)$]  The \proc $\cC_1$ is saturated.
\item [$(\gamma)$] Any saturated \proc which refines $\cC$ also refines
$\cC_1$.
\item [$(\delta)$] If $\cC_1$ is trivial, $\cC$ satisfies the relations 
(1).
\end{itemize}
This will establish (1) and (2).
$(\alpha)$
Let $((I'_0,F'_0),\ldots,(I'_\ell,F'_\ell))$ be a saturated \proc which
refines $\cC$. We consider the relations (1)
$$\begin{array}{rcl}
 x_1,\; U_0& \vda  & J_0  \\
 x_2,\; U_1& \vda  & J_1 ,\; x_1  \\
\vdots\qquad & \vdots  & \qquad\vdots  \\
 x_\ell,\; U_{\ell-1}& \vda  & J_{\ell-1} ,\; x_{\ell-1}  \\
 U_\ell& \vda  & J_\ell ,\; x_\ell
\end{array}$$
Since  $I'_0$ and $F'_0$ are conjugate, the first of these
relations gives $x_1\in I'_0$.
Hence  $x_1\in I'_1$,
and the second relation gives $x_2\in I'_1$.
Going on in this way we get for the last relation
$U_\ell \vda   J_\ell ,\; x_\ell$. avec $x_\ell\in I'_\ell$,
which furnishes the desired collapsus (lemma \ref{lemColTr}).
\\
$(\beta)$
We give the proof for $\ell= 3$.
We first show that the $I_j$ are ideals.
We give the proof for $j= 1$.
In order to show $0\in I_1$ take
$x_1= 0,\; x_2= x_3= 1$. Similarly to show $J_1\subseteq  I_1$ take
$x\in J_1$ and $x_1= 0,\; x_2= x_3= 1$.
That $x\in I_1$  and $y\leq x$ imply $y\in I_1$ is
immediate: we can keep the same  $x_i$. Suppose now $x,y\in I_1$
and let us show
$x\vu y\in I_1$. We have by hypothesis some $x_i$'s  and $y_i$'s
satisfying the following relations
$$\begin{array}{rclcrcl}
 x_1,\; U_0& \vda  & J_0  &\qquad &
 y_1,\; U_0& \vda  & J_0  \\
 x_2,\; U_1,\; x& \vda  & J_1 ,\; x_1  &&
 y_2,\; U_1,\; y& \vda  & J_1 ,\; y_1  \\
 x_3,\; U_{2}& \vda  & J_{2} ,\; x_{2}  &&
 y_3,\; U_{2}& \vda  & J_{2} ,\; y_{2}  \\
 U_3& \vda  & J_3 ,\; x_3  &&
 U_3& \vda  & J_3 ,\; y_3
\end{array}$$
Using distributivity, we get
$$\begin{array}{rclcrcl}
 (x_1\vu y_1),\; U_0& \vda  & J_0   \\
 (x_2\vi y_2),\; U_1,\; (x\vu y)& \vda &J_1,\;(x_1\vu y_1)\\
 (x_3\vi y_3),\; U_{2}& \vda  & J_{2} ,\; (x_2\vi y_2)  \\
 U_3& \vda  & J_3 ,\; (x_3\vi y_3)  \\
\end{array}$$
Let us show now that the corresponding ideals and filters are
conjugate, for instance that $I_1$ and $F_1$ are conjugate.
We assume  $x\vi y\in I_1,\; y\in F_1$ and we show
 $x\in I_1$.
We have by hypothesis some $x_i$'s  and $y_i$'s
satisfying the following relations
$$\begin{array}{rclcrcl}
 x_1,\; U_0& \vda  & J_0  &\qquad &
 y_1,\; U_0& \vda  & J_0  \\
 x_2,\; U_1,\; (x \vi y)& \vda  & J_1 ,\; x_1  &&
 y_2,\; U_1& \vda  & J_1 ,\; y_1 ,\; y \\
 x_3,\; U_{2}& \vda  & J_{2} ,\; x_{2}  &&
 y_3,\; U_{2}& \vda  & J_{2} ,\; y_{2}  \\
 U_3& \vda  & J_3 ,\; x_3  &&
 U_3& \vda  & J_3 ,\; y_3
\end{array}$$
Using distributivity, we get
$$\begin{array}{rclcrcl}
 (x_1\vu y_1),\; U_0& \vda  & J_0   \\
 (x_2\vi y_2),\; U_1,\; x,\; y& \vda&J_1,\;(x_1\vu y_1)\\
 (x_2\vi y_2),\; U_1,\; x& \vda&J_1,\;(x_1\vu y_1),\; y\\
 (x_3\vi y_3),\; U_{2}& \vda  & J_{2} ,\; (x_2\vi y_2)  \\
 U_3& \vda  & J_3 ,\; (x_3\vi y_3)
\end{array}$$
The relations \num 2 and 3 give by cut
$$\begin{array}{rclcrcl}
 (x_2\vi y_2),\; U_1,\; x& \vda &J_1,\;(x_1\vu y_1)\\
\end{array}$$
The proof is finished.
\\
$(\gamma)$
We give the proof for $\ell= 3$.
Let $((I'_0,F'_0),\ldots,(I'_3,F'_3))$ be a saturated \proc
which refines $\cC$.Let us show $I_1\subseteq I'_1$.
Take $x\in I_1$, we have
$$\begin{array}{rclcrcl}
 x_1,\; U_0& \vda  & J_0   \\
 x,\; x_2,\; U_1& \vda  & J_1 ,\; x_1  \\
 x_3,\; U_{2}& \vda  & J_2 ,\; x_2  \\
 U_3& \vda  & J_3 ,\; x_3
\end{array}$$
We deduce from this successively $x_1\in I'_0\subseteq I'_1$,
 $x_3\in F'_3\subseteq F'_2$,
$x_2\in F'_2\subseteq F'_1$,  and finally
$x\in I'_1$. Notice that the proof of the point $(\alpha)$
can be seen as a particular case of the proof of the point $(\gamma)$.
\\
$(\delta)$    is direct.

\sni Finally we prove (3). We have $x\in I_i$  and  $x\in F_i$,  and 
hence
$\cC_1$ collapses (lemma \ref{lemColTr}). Hence  $\cC$ collapses.
\end{proof}

\begin{definition}
\label{defproch3T} ~
\begin{itemize}
\item Two \procs that generate the same saturated \proc are {\em
equivalent}.
\item An \proc {\em of finite type} is one
which is equivalent to a finite \procz.
\item An \proc is {\em strict} \ssi we have
$V_i\cap I_{i+1}\neq \emptyset$  $(i= 0,\ldots,\ell-1)$ in its
generated saturated \procz.
\item  A saturated \proc $\cC= ((J_0,U_0),\ldots,(J_\ell,U_\ell))$ is
{\em frozen} \ssi it does not collapse and if we have $J_i\cup U_i= T$ 
for
$i= 0,\ldots,\ell$. An \proc is frozen iff its saturation is.
To give a strict frozen \proc is the same as to give a
strictly increasing chain of detachable prime ideals.
\end{itemize}
\end{definition}

We think of an \prolo $\ell$ as a partial specification of
an increasing chains of prime ideals $P_0,\ldots,P_\ell$
such that $J_i\subseteq P_i$, $U_i\cap P_i= \emptyset$,
$(i= 0,\ldots,\ell)$.

  From the simultaneous collapse theorem we deduce the
following result which justifies this idea of
partial specification.
\begin{theorem}
\label{th.nstformelTreil} {\em (formal \nst for chains of prime ideals)}
\Tcgi Let $T$ be a  \trdi and
$((J_0,U_0),\ldots,(J_\ell,U_\ell))$ be an \proc in $T$. \Propeq
\begin{itemize}
\item [$(a)$] There exist $\ell+1$ prime ideals $P_0\subseteq
\cdots\subseteq
P_\ell$ such that
 $J_i\subseteq P_i$, $U_i\cap P_i= \emptyset $, $(i= 0,\ldots,\ell)$.
\item [$(b)$] The \proc does not collapse.
\end{itemize}
\end{theorem}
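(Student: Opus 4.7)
{Proof plan}
The proof parallels that of Theorem \ref{th.nstformel}, with the combinatorics of monoids and ideals in rings replaced by the combinatorics of filters and ideals in the distributive lattice $T$. Only the implication $(b) \Rightarrow (a)$ requires real work. For $(a) \Rightarrow (b)$, given prime ideals $P_0 \subseteq \cdots \subseteq P_\ell$ with $J_i \subseteq P_i$ and $U_i \cap P_i = \emptyset$, the chain $((P_0, T\setminus P_0), \ldots, (P_\ell, T \setminus P_\ell))$ is a saturated refinement of $\cC$ in the sense of Definition \ref{defprochTreil}, and it is nontrivial because $0 \in P_0$ while $1 \notin P_\ell$. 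Hence $\cC$ cannot collapse.

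For $(b) \Rightarrow (a)$, I would follow the second (compactness-based) proof of Theorem \ref{th.nstformel}. Introduce a propositional theory $\mathbf{Th}(\cC)$ with atomic propositions $[x \in P_i]$ for $x \in T$ and $0 \le i \le \ell$, and axioms asserting, for each $i$: $\neg [1 \in P_i]$, $[0 \in P_i]$, $[x \in P_i] \wedge [y \in P_i] \to [x \vu y \in P_i]$, $[x \vi y \in P_i] \to [x \in P_i] \vee [y \in P_i]$, $[x \in P_i] \wedge [y \leq x] \to [y \in P_i]$, and $[x \in P_i] \to [x \in P_{i+1}]$ for $i < \ell$, together with the constraints $[x \in P_i]$ for $x \in J_i$ and $\neg [x \in P_i]$ for $x \in U_i$. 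A model of this theory is precisely a chain of prime ideals as required in $(a)$, so by the compactness theorem it suffices to prove $\mathbf{Th}(\cC)$ formally consistent.

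The key step is therefore to show that a formal inconsistency of $\mathbf{Th}(\cC)$ forces $\cC$ to collapse in the sense of Definition \ref{defprochTreil}, i.e.\ a collapse certificate of the shape described in Theorem \ref{thColSimT2}(1). This is where Theorem \ref{thColSimT2}, the simultaneous collapse theorem, plays the crucial role: its point (3) is exactly the semantic content of the cut rule of the sequent calculus for $\mathbf{Th}(\cC)$, and its point (1) shows that every propositional derivation of $\bot$ can be converted, by induction on its structure, into an iterated entailment
\[
x_1, U'_0 \vda J'_0, \quad x_2, U'_1 \vda J'_1, x_1, \ldots, \quad U'_\ell \vda J'_\ell, x_\ell,
\]
i.e.\ a collapse of $\cC$. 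The main obstacle is precisely this translation: one must handle the axioms for each level $i$ while chaining them through the inclusions $[x \in P_i] \to [x \in P_{i+1}]$, and verify that the cut eliminations at each level correspond to the applications of Theorem \ref{thColSimT2}(3) used iteratively. A Zorn's lemma variant (a maximal non-collapsing refinement of $\cC$ is automatically a chain of prime ideals, by Theorem \ref{thColSimT2}(3) applied at each level) gives an alternative route that avoids sequent calculus but relies on the full excluded middle.
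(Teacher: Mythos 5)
Your proposal is correct and takes essentially the same route as the paper, which simply states that the proof is the same as that of Theorem \ref{th.nstformel}: your main compactness argument (consistency of the propositional theory of a chain of prime ideals via the simultaneous collapse Theorem \ref{thColSimT2}, then a model by the compactness theorem) and your Zorn-style alternative (a maximal non-collapsing refinement is automatically a chain of prime ideals with complements) are exactly the two variants given there, and your treatment of $(a)\Rightarrow(b)$ matches what the paper calls the direct implication. The translation of a formal inconsistency into a collapse, which you flag as the main remaining work, is left at the same level of implicitness in the paper itself (it is covered there by the bare citation of the simultaneous collapse theorem), so your level of detail matches the original.
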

The proof is the same as the proof of theorem \ref{th.nstformel}.

\subsubsection*{Joyal's Theory}
\label{subsubsecJoyal}
The idea of Joyal is to introduce a lattice $\Kr_\ell(T)$ associated to 
$T$
by an universal condition such that the points of $\Spec(\Kr_\ell(T))$  
are
(in a natural way) the chains of prime ideals of length  $\ell$.
To give such a chain is equivalent to give morphisms
$\mu_0\geq \mu_1\geq \cdots\geq  \mu_\ell$ from $T$ to $\Deux$.
If we have a \trdi $K$ and $\ell+1$ \homos
$\varphi_0\geq \varphi_1\geq \cdots\geq  \varphi_\ell$ from $T$ to $K$ 
such
that for all lattices $ T'$ and all $\psi_0\geq \psi_1\geq \cdots\geq
\psi_\ell\in\Hom(T,T')$
we have a unique \homo  $\eta :K\rightarrow T'$  such that $\eta
\varphi_0= \psi_0$,
$\eta \varphi_1= \psi_1,$ $\ldots$,  $\eta \varphi_\ell= \psi_\ell$, 
then
the elements of
$\Spec(K)$ can be identified canonically with chains of prime ideals
of length $\ell$ in $T$.

The advantage is that $K$ is an object that we can build effectively
from $T$, in opposition to the chain of prime ideals or points in the 
spectrum.

The fact that such an universal object $\Kr_\ell(T)$ always exists and
is unique follows, constructively, from general abstract algebra 
arguments.

 The explicit description of $\Kr_\ell(T)$ is simplified by the
notion of entailment relations (\cite{cc}). More precisely we have the
following result.

\begin{theorem}
\label{thKrJoy}
Let $T$ be a \trdiz. We consider the following universal problem,
called here ``Krull problem'': to find a \trdi $K$ and $\ell+1$ \homos
$\varphi_0\geq \varphi_1\geq \cdots\geq \varphi_\ell$ from $T$ to $K$
such that, for any lattice $ T'$ and any morphism $\psi_0\geq
\psi_1\geq \cdots\geq \psi_\ell\in\Hom(T,T')$ we have one and only one
morphsim $\eta :K\rightarrow T'$ such that $\eta \varphi_0= \psi_0$,
$\eta \varphi_1= \psi_1,$ $\ldots$, $\eta \varphi_\ell= \psi_\ell$.
This universal problem admits a unique solution (up to isomorphism).
We write $\Kr_\ell(T)$ the corresponing distributive lattice.  It can
be described as the lattice generated by the disjoint union $S$ of
$\ell+1$ copies of $T$ (we shall write $\varphi_i$ the bijection
between $T$ and the $i$th copy) with the \entrel $\vdash_S$ defined
as follows. If $U_i$ and $J_i$ $(i= 0,\ldots,\ell)$ are finite subsets
of $T$ we have
$$ \varphi_0(U_0),\ldots,\varphi_\ell(U_\ell) \,\vdash_S\,
\varphi_0(J_0),\ldots,\varphi_\ell(J_\ell)
$$
\ssi there exist $x_1,\ldots,x_\ell\in T$ such that
 (where $\vda$ is the \entrel of $T$):
$$\begin{array}{rcl}
x_1,\;  U_0& \vda  &  J_0  \\
x_2,\;  U_1& \vda  &  J_1 ,\; x_1  \\
    \vdots\quad & \vdots  & \quad\vdots  \\
x_\ell,\;  U_{\ell-1}& \vda& J_{\ell-1} ,\; x_{\ell-1}  \\
 U_\ell& \vda  &  J_\ell ,\; x_\ell  \\
\end{array}$$
\end{theorem}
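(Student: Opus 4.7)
The plan is as follows. Uniqueness of $\Kr_\ell(T)$ up to canonical isomorphism is automatic from the universal property, so I focus on existence by verifying that the proposed explicit description solves Krull's problem. The first step is to check that the relation $\vdash_S$ defined on $\Pf(S)$ is an \entrelz. Reflexivity of a generator $\varphi_k(a)$ is witnessed by the template $x_1 = \cdots = x_k = 0$ and $x_{k+1} = \cdots = x_\ell = 1$, which makes every listed relation reduce to a trivial or reflexive instance in $T$; monotonicity is immediate because adjoining elements to either side preserves the same witnesses.

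The main obstacle is the cut rule. To cut on a generator $\varphi_j(z)$, I am given two witness sequences $(x_k)$ (certifying the first premise, with $z$ adjoined to $U_j$) and $(y_k)$ (certifying the second, with $z$ adjoined to $J_j$), and must fuse them into witnesses for the conclusion. Following the pattern used in part $(\beta)$ of the proof of Theorem \ref{thColSimT2}, I would form combined witnesses by taking $x_k \vu y_k$ in positions playing the role of an ideal-side element and $x_k \vi y_k$ in positions playing the role of a filter-side element, use distributivity of $T$ to rewrite the resulting system into the standard shape, and finally apply the cut rule of $\vdash_T$ at the $j$-th relation to eliminate $z$.

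Once $\vdash_S$ is established as an entailment relation, Theorem \ref{thEntRel1} delivers a distributive lattice $K$ with $A \vdash_K B \Leftrightarrow A \vdash_S B$ for finite $A, B \subseteq S$. I would then verify that each canonical map $\varphi_i : T \to K$ preserves $0$, $1$, $\vu$ and $\vi$: every such verification reduces to an instance of the defining condition using the template $x_k = 0$ for $k \leq i$ and $x_k = 1$ for $k > i$, which renders every relation other than the one at index $i$ trivial and reduces that one to the corresponding equational law in $T$. The chain $\varphi_0 \geq \cdots \geq \varphi_\ell$ is witnessed by setting $x_k = 0$ for $k \leq i$, $x_{i+1} = a$ and $x_k = 1$ for $k > i+1$, certifying $\varphi_{i+1}(a) \vdash_S \varphi_i(a)$.

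Finally, for the universal property, given morphisms $\psi_0 \geq \cdots \geq \psi_\ell : T \to T'$, I define $\eta$ on the generators of $K$ by $\eta(\varphi_i(a)) = \psi_i(a)$ and must check that $\eta$ respects $\vdash_S$. Given witnesses $x_1, \ldots, x_\ell \in T$ for a premise, applying $\psi_i$ to the $i$-th relation in $T$ and then progressively meeting with $\bigwedge_{k < i} \psi_k(U_k)$ while using $\psi_{i+1}(x_k) \leq \psi_i(x_k)$ to eliminate the $x_k$'s telescopes the resulting inequalities in $T'$ into the required conclusion. Theorem \ref{thEntRel1} then lifts $\eta$ uniquely to a lattice morphism $K \to T'$; the factorisations $\eta \varphi_i = \psi_i$ are built into the definition of $\eta$, and uniqueness follows because the images of the $\varphi_i$ jointly generate $K$.
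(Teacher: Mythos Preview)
Your proposal is correct and follows essentially the same approach as the paper: the core step is the verification of the cut rule for $\vdash_S$, which you handle exactly as the paper does (combining the two witness sequences via $\vu$ below the cut level and $\vi$ above it, then applying cut in $T$ at level $j$ to eliminate $z$). You are more explicit than the paper about reflexivity, the morphism properties of the $\varphi_i$, and especially the universal property (where the paper simply asserts that $\vdash_S$ is ``clearly the least possible relation ensuring the $\varphi_i$ to form an increasing chain''), but your telescoping argument for that part is correct and is a welcome elaboration rather than a different route.
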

\begin{proof}{Proof}
First we show that the relation $\vdash_S$ on $\Pf(S)$ described in the
statement of the theorem is indeed an entailment relation. The only
point that needs explanation is the cut rule. To simplify notations,
we take
$\ell= 3.$
We have then 3 possible cases, and we analyse only one case, where
$X,\varphi_1(z)\vdash_S Y$ and $X\vdash_S Y,\varphi_1(z)$, the other
cases being similar.
By hypothesis we have $x_1,x_2,x_3,y_1,y_2,y_3$ such that
$$\begin{array}{rclcrcl}
 x_1,\; U_0& \vda  & J_0  &\qquad &
 y_1,\; U_0& \vda  & J_0  \\
 x_2,\; U_1,\; z& \vda  & J_1 ,\; x_1  &&
 y_2,\; U_1& \vda  & J_1 , y_1 ,\; z \\
 x_3,\; U_{2}& \vda  & J_{2} ,\; x_{2}  &&
 y_3, U_{2}& \vda  & J_{2} , y_{2}  \\
 U_3& \vda  & J_3 ,\; x_3  &&
 U_3& \vda  & J_3 ,\; y_3
\end{array}$$
The two \entrels on the second line give
$$\begin{array}{rclcrcl}
 x_2,\;y_2,\; U_1,\; z& \vda  & J_1 ,\; x_1,\;y_1 &\qquad
 x_2,\;y_2,\; U_1& \vda  & J_1 ,\; x_1 ,\; y_1 ,\; z \end{array}$$
hence by cut
$$\begin{array}{rclcrcl}
 x_2,\;y_2,\; U_1& \vda  & J_1 ,\; x_1,\;y_1
\end{array}$$
\cad
$$\begin{array}{rclcrcl}
 x_2\vi y_2,\; U_1& \vda  & J_1 ,\; x_1\vu y_1
\end{array}$$
Finally, using distributivity
$$\begin{array}{rclcrcl}
 (x_1\vu y_1),\; U_0& \vda  & J_0   \\
 (x_2\vi y_2),\; U_1& \vda&J_1,\;(x_1\vu y_1)\\
 (x_3\vi y_3),\; U_{2}& \vda  & J_{2},\;(x_2\vi y_2)  \\
 U_3& \vda  & J_3, \;(x_3\vi y_3)
\end{array}$$
and hence $\varphi_0(U_0),\dots,\varphi_3(U_3)\,\vdash_S\,
\varphi_0(J_0),\dots,\varphi_3(J_3)$.\\
It is left to show that the lattice  $\Kr_\ell(T)$ defined from
$(S,\vdash_S)$ satisfied the desired universal condition.
For this it is enough to notice that the entailment relation
we have defined is clearly the least possible relation
ensuring the  $\varphi_i$ to form an increasing chain.
\end{proof}

 Notice that the morphisms $\varphi_i$ are injective: it is easily
seen that for $a,b \in T$ the relation
$\varphi_i(a)\vdash_S\varphi_i(b)$ implies
$a\vda b$, and hence that  $\varphi_i(a)= \varphi_i(b)$ implies 
$a= b.$
\subsubsection*{Comparing the two approaches}
\label{subsubsecCompar}

The analogy between the proofs of theorems
\ref{thColSimT2} and \ref{thKrJoy} is striking.
Actually these two theorems show together that an \proc
$\cC= ((J_0,U_0),\ldots,(J_\ell,U_\ell))$ collapses in $T$ \ssi the 
\proi
 $\cP= (\varphi_0(J_0),\ldots,\varphi_\ell(J_\ell);
\varphi_0(U_0),\ldots,\varphi_\ell(U_\ell))$
collapses in $\Kr_\ell(T)$.
This is not a coincidence: given the universal property that defines
$\Kr_\ell(T)$
to give a detachable \idep of $\Kr_\ell(T)$ is the same as to give an
increasing chain of detachable prime ideals of  $T$ (of length $\ell$).
One could then think that one of the two proofs is superfluous.

Classically, one could organize things as follows.
One would define first a priori the collapsus of
an \proi (resp. an \procz) as meaning that it is impossible to
refine this \proi in a \idep
(resp. to refine this \proc in an increasing chain of prime ideals).
The simultaneous collapsus theorems (theorems
\ref{lemColSimT1}~(1) and \ref{thColSimT2}~(3)) are direct
with such definitions.
Furthermore, the algebraic characterisation of the collapsus
of an \proi
$(J,U)$ (\cad $U_0\vda J_0$ for some finite subsets $U_0\subseteq U$
and $J_0\subseteq J$) are also easily established.
The description of  $\Kr_\ell(T)$ given in theorem \ref{thKrJoy}
implies then (taking into account the algebraic characterisation of
the collapsus of an \proiz) the algebraic characterisation of
the collapsus of an \procz, \cad the point $(1)$ of theorem 
\ref{thColSimT2}.

Constructively, we have defined the collapsus of an \proi (resp. of an
\procz) as
meaning the impossibility of a refinement of this \proi into a saturated
non trivial \proi{\footnote{~More precisely the double negation (\dots
impossibility \dots non trivial) has to be taken, of course,
in the form of an explicit affirmation.}}  (resp. of this \proc in a 
saturated
non trivial \procz).
To deduce the algebraic characterisation of the collapsus of an
\proc from the algebraic characterisation of the collapsus of an \proi
and of the descrition of $\Kr_\ell(T)$ (which would avoid the
``superfluous proof'') it is enough to explain how to derive
from a saturated \proc  $((I_0,F_0),\ldots,(I_\ell,F_\ell))$
an increasing chain of morphisms $(\psi_0,\ldots,\psi_\ell)$ from $T$ in 
a
\trdi with  $\psi_k^{-1}(0)= I_k$ and
$\psi_k^{-1}(1)= F_k$ ($k= 0,\ldots,\ell$). For this it is enough to 
apply
the following lemma.
\begin{lemma}
\label{lemProcEtKrl}
Let $\;\cC= ((I_0,F_0),\ldots,(I_\ell,F_\ell))$ be a saturated \proc in 
a
\trdi  $T$. Let  $T_\cC$ be the quotient \trdi of  $\Kr_\ell(T)$ by
$\varphi_0(I_0)= \cdots= \varphi_\ell(I_\ell)= 0,\;
\varphi_0(F_0)= \cdots= \varphi_\ell(F_\ell)= 1$. Let $\pi$ be the 
canonical
projection
from $\Kr_\ell(T)$ onto $T_\cC$ and $\psi_k= \pi\circ\varphi_k$.
Then $\psi_k^{-1}(0)= I_k$ and $\psi_k^{-1}(1)= F_k$ 
$\,(k= 0,\ldots,\ell)$.
\end{lemma}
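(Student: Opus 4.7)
The plan is as follows. The inclusions $I_k \subseteq \psi_k^{-1}(0)$ and $F_k \subseteq \psi_k^{-1}(1)$ are immediate from the construction of the quotient $T_\cC$, so only the reverse inclusions carry content. By the symmetry between ideals and filters in the description of $T_\cC$ and of $\Kr_\ell(T)$, it suffices to prove $\psi_k^{-1}(0) \subseteq I_k$; the companion $\psi_k^{-1}(1) \subseteq F_k$ will then follow by the same argument with $I_i$ and $F_i$, $\vi$ and $\vu$, and the two conjugacy clauses interchanged.

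Suppose $\psi_k(a) = 0$, i.e.\ $\varphi_k(a) = 0$ in $T_\cC$. The first step is to invoke Proposition \ref{propIdealFiltre} on the quotient defining $T_\cC$ to obtain finite subsets $I'_i \subseteq I_i$ and $F'_i \subseteq F_i$ such that, in $\Kr_\ell(T)$,
$$
\varphi_k(a),\; \varphi_0(F'_0),\, \ldots,\, \varphi_\ell(F'_\ell) \;\vdash_S\; \varphi_0(I'_0),\, \ldots,\, \varphi_\ell(I'_\ell),
$$
and then to unfold this via the explicit description of $\vdash_S$ in Theorem \ref{thKrJoy}, producing $x_1, \ldots, x_\ell \in T$ together with a chain of $\ell+1$ entailments in $T$ of the generic shape $x_{i+1},\, F'_i \vda I'_i,\, x_i$ (with $x_0$ dropped at $i=0$ and $x_{\ell+1}$ dropped at $i=\ell$), modified at position $k$ by appending $a$ to the left-hand side.

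Setting $f_i = \Vi F'_i \in F_i$ and $j_i = \Vu I'_i \in I_i$, the core of the argument is to propagate through this chain by two inductions meeting at position $k$. Upward, by induction on $j = 1, \ldots, k$, the relation at position $j-1$ reads $x_j \vi f_{j-1} \leq j_{j-1} \vu x_{j-1}$; combined with the induction hypothesis $x_{j-1} \in I_{j-2} \subseteq I_{j-1}$ (vacuous for $j=1$, where the right-hand side is just $j_0 \in I_0$), this puts $x_j \vi f_{j-1}$ in $I_{j-1}$, and conjugacy of $(I_{j-1}, F_{j-1})$ with $f_{j-1} \in F_{j-1}$ yields $x_j \in I_{j-1}$. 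Downward, by induction on $j = \ell, \ldots, k+1$, the relation at position $j$ reads $x_{j+1} \vi f_j \leq j_j \vu x_j$ (with $x_{\ell+1}$ absent when $j = \ell$); using $x_{j+1} \in F_{j+1} \subseteq F_j$ and $f_j \in F_j$ puts $x_{j+1} \vi f_j$ in $F_j$, hence $j_j \vu x_j \in F_j$, and the other direction of conjugacy with $j_j \in I_j$ delivers $x_j \in F_j$. Finally the entailment at position $k$ reads $x_{k+1} \vi f_k \vi a \leq j_k \vu x_k$: the factor $x_{k+1} \vi f_k$ lies in $F_k$ (downward induction together with $F_{k+1} \subseteq F_k$), while $j_k \vu x_k$ lies in $I_k$ (upward induction together with $I_{k-1} \subseteq I_k$), so a last application of conjugacy yields $a \in I_k$.

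The main technical obstacle is the bookkeeping at the boundary positions ($i \in \{0, \ell\}$, $k \in \{0, \ell\}$, and the degenerate case $\ell = 0$), where one or both of the auxiliary $x$'s drop out and one of the two inductions becomes empty; each such case is a routine specialisation of the pattern above, relying only on the conjugacy of each $(I_i, F_i)$ and the monotonicity $I_i \subseteq I_{i+1}$, $F_{i+1} \subseteq F_i$ built into the saturation hypothesis.
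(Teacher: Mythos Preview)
Your proof is correct and follows essentially the same route as the paper's: invoke Proposition~\ref{propIdealFiltre} to turn $\psi_k(a)=0$ into an entailment in $\Kr_\ell(T)$, unfold via Theorem~\ref{thKrJoy} to obtain the chain of $\ell+1$ entailments in $T$ with witnesses $x_1,\ldots,x_\ell$, then run an upward pass (conjugacy gives $x_j\in I_{j-1}\subseteq I_j$) and a downward pass (conjugacy gives $x_j\in F_j\subseteq F_{j-1}$) meeting at position $k$ to conclude $a\in I_k$. Your write-up is more explicit about the finite subsets $I'_i,F'_i$ and the boundary cases than the paper's, but the argument is the same.
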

\begin{proof}{Proof}
For instance $\psi_k^{-1}(0)= \left\{x\in 
T;\varphi_k(x)= _{T_\cC}0\right\}$
is equal to, by proposition \ref{propIdealFiltre}
$$ \left\{x\in 
T\;;\;\varphi_k(x),\varphi_0(F_0),\ldots,\varphi_\ell(F_\ell)
\;\vdash_{\Kr_\ell(T)}\; \varphi_0(I_0),\ldots,\varphi_\ell(I_\ell) 
\right\}
$$
\cad the set $x$ such that there exist $x_1,\ldots,x_\ell$ such that
(where $\vda$ is the \entrel of $T$)
$$\begin{array}{rcl}
x_1,\;  F_0& \vda  &  I_0  \\
x_2,\;  F_1& \vda  &  I_1 ,\; x_1  \\
    \vdots\quad & \vdots  & \quad\vdots  \\
x,x_{k+1},\;  F_k& \vda  &  I_k ,\; x_k  \\
    \vdots\quad & \vdots  & \quad\vdots  \\
x_\ell,\;  F_{\ell-1}& \vda& I_{\ell-1} ,\; x_{\ell-1}  \\
 F_\ell& \vda  &  I_\ell ,\; x_\ell  \\
\end{array}$$
Since the \proc $\cC$ is saturated one has successively
$x_1\in I_1\subseteq I_2$, $x_2\in I_2$, \dots  $x_k\in I_k$, and  
$x_\ell\in
F_\ell$, \dots,
$x_{k+1}\in F_{k+1}\subseteq F_k$, hence $x\in I_k$.
\end{proof}
\subsubsection*{Constructive definition of the Krull dimension of a 
\trdi }
\label{subsubsecDimTrdi}
Since an \proc $\cC= ((J_0,U_0),\ldots,(J_\ell,U_\ell))$ collapses in 
$T$
\ssi the \proi
 $\cP= (\varphi_0(J_0),\ldots,\varphi_\ell(J_\ell);
\varphi_0(U_0),\ldots,\varphi_\ell(U_\ell))$
collapses in $\Kr_\ell(T)$, the two variations in the definition below
of the dimension of a \trdi are equivalent.

\begin{definition}
\label{defDiTr}~
\begin{itemize}
\item [1)] An {\em \proelz} in a  \trdi $T$ is an \proc of the form
$$ ((0,x_1),(x_1,x_2),\ldots,(x_\ell,1))
$$
(with $x_i$ in $T$). 
\item [2)] A \trdi $T$ is {\em of dimension $\leq \ell-1$}
iff it satisfies one of the equivalent conditions
\begin{itemize}
\item Any \proel of length $\ell$ collapses.
\item For any sequence $x_1,\dots,x_\ell\in T$ we have
$$
\varphi_0(x_1),\dots,\varphi_{\ell-1}(x_{\ell})
\vda
\varphi_1(x_1),\dots,\varphi_{\ell}(x_{\ell})
$$
in $\Kr_{\ell}(T)$,
\end{itemize}
\end{itemize}
\end{definition}
The condition in (2) is that:
$\forall x_1,\dots,x_\ell\in T\quad \exists a_1,\dots,a_\ell\in T$ such 
that
$$\begin{array}{rclll}
a_1,\;x_1& \vda  &  0    \\
a_2,\;x_2& \vda  &  a_1,\;x_1    \\
\vdots\qquad & \vdots  & \qquad  \vdots    \\
a_\ell,\;x_\ell& \vda  &a_{\ell-1},\;x_{\ell-1}      \\
1 & \vda  &  a_\ell,\;x_\ell    
\end{array}$$

 In particular the \trdi $T$ is of dimension $\leq -1$ \ssi $1= 0$
in $T$, and it is of dimension
$\leq 0$ \ssi $T$ is a Boolean algebra (any element has a complement).

We shall not give for \trdis neither the definition of
$\dim(T)<\ell$ nor the one of $\dim(T)\geq \ell$ and we limit ourselves
to mention that
$\dim(T)>\ell$ means that $\dim(T)\leq \ell$ is impossible.
One could refine as in section \ref{subsecPsr} these definitions
when one has a primitive inequality relation in $T$.

\ss The second variant in the definition is useful for deriving
easily the simpler following characterisation.

\begin{lemma}
\label{lemDimGen}
A \trdi $T$ generated by a set $S$ is of dimension $\leq \ell-1$
\ssi for any sequence $x_1,\dots,x_\ell\in S$
$$
\varphi_0(x_1),\dots,\varphi_{\ell-1}(x_{\ell})
\vda
\varphi_1(x_1),\dots,\varphi_{\ell}(x_{\ell})
$$
in $\Kr_{\ell}(T)$.
\end{lemma}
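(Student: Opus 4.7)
{Proof sketch}
The direction $(\Rightarrow)$ is immediate from the second characterisation in Definition \ref{defDiTr} applied to sequences in $S \subseteq T$. The substance is the converse.

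The plan for $(\Leftarrow)$ is the following. Fix $\ell$ and write $R(y_1,\ldots,y_\ell)$ for the relation
$$\varphi_0(y_1),\ldots,\varphi_{\ell-1}(y_\ell)\;\vdash_{\Kr_\ell(T)}\;\varphi_1(y_1),\ldots,\varphi_\ell(y_\ell).$$
I will show that for each fixed index $j$ and fixed $y_1,\ldots,y_{j-1},y_{j+1},\ldots,y_\ell \in T$, the set
$$E_j := \{\,a\in T \mid R(y_1,\ldots,y_{j-1},a,y_{j+1},\ldots,y_\ell)\,\}$$
contains $0$ and $1$ and is closed under $\vi$ and $\vu$. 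Since $T$ is generated by $S$ as a distributive lattice, any element of $T$ is built from $S$ using $0,1,\vi,\vu$; iterating this closure property coordinate by coordinate starting from the hypothesis (which tells us $E_j \supseteq S$ when the other coordinates are in $S$) then propagates $R$ from sequences in $S$ to arbitrary sequences in $T$.

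The closure verifications use only the axioms $(R)$, $(M)$, $(T)$ of entailment relations and the two distributivity rules stated on page~\pageref{remotr}, together with the fact that each $\varphi_i$ is a lattice morphism. For $a=0$ the left-hand meet contains $\varphi_{j-1}(0)=0$, so $R$ holds trivially by $(R)$ and $(M)$; dually for $a=1$ the right-hand join contains $\varphi_j(1)=1$. For closure under $\vu$, write $\varphi_{j-1}(a\vu b)=\varphi_{j-1}(a)\vu\varphi_{j-1}(b)$ on the left; the first distributivity rule reduces $R(\ldots,a\vu b,\ldots)$ to two statements with $\varphi_{j-1}(a)$ (resp.\ $\varphi_{j-1}(b)$) on the left, each of which follows from $R(\ldots,a,\ldots)$ (resp.\ $R(\ldots,b,\ldots)$) by monotonicity after observing $\varphi_j(a),\varphi_j(b)\le\varphi_j(a\vu b)$. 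The case of $\vi$ is dual: expand $\varphi_j(a\vi b)=\varphi_j(a)\vi\varphi_j(b)$ on the right, apply the second distributivity rule, and weaken $\varphi_{j-1}(a\vi b)\le\varphi_{j-1}(a),\varphi_{j-1}(b)$ on the left.

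The main obstacle is really only bookkeeping: one must handle each of the $\ell$ coordinates in turn (in each step the other coordinates are frozen, so the induction on lattice-term structure is one-dimensional) and keep track of whether the generator $\varphi_{j-1}$ or $\varphi_j$ receives the join/meet in each distributivity step. No new computation at the level of $T$ itself is needed; the algebraic content is entirely absorbed into the entailment-relation calculus of $\Kr_\ell(T)$.
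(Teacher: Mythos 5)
Your proof is correct and follows essentially the same route as the paper: the paper's two-line argument is exactly the closure of the relation under joins in each coordinate via the distributivity rule, combined with the fact that every element of $T$ is an inf of sups of elements of $S$. You merely spell out what the paper leaves implicit (the dual closure under $\vi$, the cases $0$ and $1$, and the coordinate-by-coordinate induction), all of which is sound.
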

Indeed using distributivity, one can deduce
$$ a\vu a',A\vda b\vu b',B$$
from $a,A\vda b,B$ and $a',A\vda b',B$. Furthermore any element of $T$
is an inf of sups of elements of $S$.

 Notice the analogy bewteen the formulation of this condition and
the definition of pseudo regular sequence  \ref{defproch}.

\subsubsection*{Connections with Joyal's definition}
\label{subsubsecJoyal2}

 Let $T$ be a distributive lattice, Joyal \cite{esp} gives the following
definition of $\dim(T)\leq \ell-1$. Let
$\varphi^\ell_i:T\rightarrow \Kr_\ell(T)$ be the $\ell+1$ universal 
morphisms.
By universality of $\Kr_{\ell+1}(T)$, we have
$\ell+1$ morphisms $\sigma_i:\Kr_{\ell+1}(T)\rightarrow
\Kr_\ell(T)$
such that $\sigma_i\circ \varphi^{\ell+1}_j =  \varphi^\ell_j$ if 
$j\leq i$
and $\sigma_i\circ \varphi^{\ell+1}_j =  \varphi^\ell_{j-1}$ if $j>i$.
Joyal defines then $\dim(T)\leq \ell$ to mean that
$(\sigma_0,\dots,\sigma_\ell):\Kr_{\ell+1}(T)\rightarrow
\Kr_\ell(T)^{\ell+1}$
is injective. This definition can be motivated by proposition
\ref{propRep2}: the elements in the image of
de $Sp(\sigma_i)$ are the chains of prime ideals
$(\alpha_0,\dots,\alpha_{\ell})$
with $\alpha_i= \alpha_{i+1}$, and $Sp(\sigma_0,\dots,\sigma_\ell)$
is surjective \ssi for any chain $(\alpha_0,\dots,\alpha_{\ell})$
there exists $i<\ell$ such that $\alpha_i= \alpha_{i+1}$. This means 
exactly
that there is no non trivial chain of prime ideals of length $l+1$.
Using \tcgz, one can then see the equivalence
with definition  \ref{defDiTr}.
One could check directly this equivalence using a constructive
metalanguage, but
for lack of space, we shall not present here this argument.
Similarly, it would be possible to establish the equivalence of our
definition with the one of Espan\~ol \cite{esp} (here also, this 
connection
is clear via \tcgz).

\section{Zariski and Krull lattice associated to a commutative ring}
\label{secZariKrull}
\subsubsection*{Zariski lattice}
\label{subsubsecZar}
Given a commutative ring $R$ the {\em Zariski lattice}  $\Zar(R)$ has 
for
elements the radical ideals (the order relation being inclusion).
It is well defined as a lattice.
Indeed $\sqrt{I_1}= \sqrt{J_1}$ and $\sqrt{I_2}= \sqrt{J_2}$
imply
$\sqrt{I_1I_2}=  \sqrt{J_1J_2} $
(which defines $\sqrt{I_1}\vi\sqrt{I_2}$) and
$\sqrt{I_1+I_2}= \sqrt{J_1+J_2}$
(which defines $\sqrt{I_1}\vu\sqrt{I_2}$). The Zariski lattice of $R$ is
always distributive, but may not be decidable.
Nevertheless an inclusion  $\sqrt{I_1}\subseteq \sqrt{I_2}$ can always
be certified in a finite way if the ring $R$ is discrete.
This lattice contains all the informations necessary for a constructive
development of the abstract theory of the Zariski spectrum.\\
We shall write $\wi{a}$ for $\sqrt{\gen{a}}$. Given a subset $S$ of $A$
we write $\wi{S}$ the subset of $\Zar(R)$ the  elements of which are
$\wi{s}$ for
$s\in S$. We have
$\wi{a_1}\vu\cdots\vu\wi{a_m}= \sqrt{\gen{a_1,\ldots,a_m}}$ and
$\wi{a_1}\vi\cdots\vi\wi{a_m}= \wi{a_1\cdots a_m}$ \\
Let $U$ and  $J$ be two finite subsets of $R$, we have
$$ \wi{U}\,\vdash_{\Zar(R)}\wi{J}
\quad\Longleftrightarrow \quad
\prod_{u\in U} u  \in \sqrt{\gen{J}}
\quad\Longleftrightarrow \quad
\cM(U)\cap \gen{I}\neq \emptyset
$$
\cad
$$ (J,U){\rm \;collapses \;in  \;  } R
\quad\Longleftrightarrow \quad
(\wi{J},\wi{U}){\rm \;collapses \;in  \;  } \Zar(R)
$$
This describes completely the lattice $\Zar(R)$. More precisely we have:
\begin{proposition}
\label{propZar} The lattice $\Zar(R)$ of a commutative ring $R$ is
(up to isomorphsim) the lattice generated by  $(R,\vda)$  where $\vda$
is the least entailment relation over $R$ such that
$$\begin{array}{rclcrclcrcl}
  0_A   & \vda  &      &\qquad  & x,\; y & \vda  &  x y   \\
        & \vda  &  1_A &\qquad &  xy    & \vda  &   x   &\qquad &
  x+y   & \vda  & x,\; y  \\
\end{array}$$
\end{proposition}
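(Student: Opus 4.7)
The plan is to invoke the fundamental theorem of \entrels (theorem \ref{thEntRel1}) to reduce the proposition to an explicit identification of the minimal \entrel on $R$ generated by the five axioms. The assignment $x\mapsto\wi{x}$ evidently satisfies all five axioms in $\Zar(R)$ (since $\wi{0}= 0$, $\wi{1}= 1$, $\wi{x}\vi\wi{y}= \wi{xy}$, $\wi{xy}\le \wi{x}$ and $\wi{x+y}\le \wi{x}\vu\wi{y}$), so by the universal property of the lattice $L$ presented by $(R,\vda)$, there is a unique \homo $\chi\colon L\to\Zar(R)$ extending this assignment, and it is surjective since every element of $\Zar(R)$ is a finite sup $\wi{a_1}\vu\cdots\vu\wi{a_m}$. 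The task is thus reduced to showing $\chi$ is injective, which by theorem \ref{thEntRel1} amounts to showing that the minimal \entrel $\vda$ generated by the axioms coincides with the relation $\vdash^*$ on $\Pf(R)$ defined by $U \vdash^* J \Longleftrightarrow \prod_{u\in U} u \in \sqrt{\gen{J}}$.

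For the inclusion $\vda\subseteq\vdash^*$, the plan is to verify that $\vdash^*$ is itself an \entrel satisfying the five axioms. Reflexivity, monotony, and the five axioms are immediate membership computations. The delicate point is the cut rule: given $((\prod A)x)^m\in\gen{B}$ and $(\prod A)^n= b+cx$ with $b\in\gen{B}$, one expands $(\prod A)^{nm}= (b+cx)^m$ by the binomial formula to isolate the term $c^m x^m$ modulo $\gen{B}$, then multiplies by $(\prod A)^m$ to absorb $x^m(\prod A)^m= ((\prod A)x)^m$ into $\gen{B}$, yielding $(\prod A)^{nm+m}\in\gen{B}$ and hence $\prod A\in\sqrt{\gen{B}}$.

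The converse inclusion $\vdash^*\subseteq\vda$ is the substance of the proposition, and I plan to establish it by chaining four short derivations from the axioms: (i) $u_1,\ldots,u_k\vda u_1\cdots u_k$ by iterating the axiom $x,y\vda xy$ with cut; (ii) $x\vda x^n$ for all $n\ge 1$ by applying (i) with all arguments equal to $x$ (noting that $\{x,\ldots,x\}= \{x\}$ in $\Pf(R)$); (iii) $xy\vda y$ by symmetry from the axiom $xy\vda x$; and (iv) $\sum_j a_jb_j\vda b_1,\ldots,b_k$ by iterating $x+y\vda x,y$ and composing with (iii) via cut. Now if $U\vdash^* J$, so $(\prod U)^n= \sum_j a_jb_j$ with $b_j\in J$, chaining via cut produces
$$U\;\vda\;\prod U\;\vda\;(\prod U)^n= \sum_j a_jb_j\;\vda\;b_1,\ldots,b_k\;\vda\;J,$$
the last link being monotony since $\{b_1,\ldots,b_k\}\subseteq J$. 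The only real obstacle is the cut-rule verification for $\vdash^*$, whose underlying algebraic identity is the familiar Rabinowitsch-style elimination already exploited in the proof of theorem \ref{ThColSim1}; once that identity is written down correctly, everything else is mechanical manipulation in the calculus of \entrelsz.
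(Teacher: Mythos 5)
Your proposal is correct and takes essentially the same route as the paper: both identify the least \entrel generated by the five axioms with the relation $U\vdash J\Longleftrightarrow \prod_{u\in U}u\in\sqrt{\gen{J}}$ (equivalently, $\cM(U)$ meets $\gen{J}$), with the cut rule checked by exactly the same Rabinowitsch-style elimination. The paper simply leaves as ``clear'' the two points you spell out in detail, namely the derivability of this relation from the axioms and the bookkeeping identifying the presented lattice with $\Zar(R)$.
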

\begin{proof}{Proof}
It is clear that the relation $U\vda J$ defined by ``$\cM(U)$
meets $\gen{J}$'' satisfies these axioms. It is also clear that
the entailment relation generated by these axioms contains this
relation. Let us show that this relation is an  \entrelz.
Only the cut rule is not obvious.
Assume that $\cM(U,a)$ meets
$\gen{J}$ and that $\cM(U)$ meets $\gen{J,a}$. There exist then
$m_1,m_2\in \cM(U)$ and $k,x$ such that
$a^k m_1 \in \gen{J},~m_2+ax\in \gen{J}$.
Eliminating $a$ this implies that $\cM(U)$ intersects $\gen{J}.$
\end{proof}
We have $\wi{a}= \wi{b}$ \ssi $a$ divides a power of $b$ and
$b$ divides a power of $a$.
\begin{proposition}
\label{propZar2}
In a commutative ring $R$ to give an ideal of the lattice
$\Zar(R)$ is the same as to give a radical ideal of $R$.
If $I$ is a radical ideal of $R$ one associates the ideal
$${\cI} =  \{ J \in \Zar(R)~|~ J \subseteq I\}$$
of $\Zar(R)$. Conversely if $\cal{I}$ is an ideal of
$\Zar(R)$ one can associate the ideal
$$I= \bigcup _{J\in\cal{I}}J= \{ x\in A~|~\wi{x}\in \cal{I}\},$$
which is a radical ideal of $R.$
In this bijection the prime ideals of the ring correspond to
the prime ideals of the Zariski lattice.
\end{proposition}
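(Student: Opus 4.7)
The plan is to verify that the two constructions described are mutually inverse and then check that primality is preserved. Throughout I will use that the lattice $\Zar(R)$ is generated by the $\widetilde{a}$ for $a\in R$, with $\widetilde{a}\vu\widetilde{b}=\widetilde{\gen{a,b}}$ and $\widetilde{a}\vi\widetilde{b}=\widetilde{ab}$, and that $\widetilde{a}\leq\widetilde{b}$ iff $a\in\sqrt{\gen{b}}$.

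First I would check that the two assignments are well-defined. Given a radical ideal $I$ of $R$, the set $\mathcal{I}=\{J\in\Zar(R)\mid J\subseteq I\}$ clearly contains $0$, is downward closed, and is closed under finite suprema because if $J_1,J_2\subseteq I$ then $J_1\vu J_2=\sqrt{J_1+J_2}\subseteq I$ since $I$ is radical. Conversely, given an ideal $\mathcal{I}$ of $\Zar(R)$, set $I=\{x\in R\mid \widetilde{x}\in\mathcal{I}\}$. Then $0\in I$ since $\widetilde{0}=0_{\Zar(R)}$; if $x,y\in I$ then $\widetilde{x+y}\leq\widetilde{x}\vu\widetilde{y}\in\mathcal{I}$ by downward closure; if $x\in I$ and $r\in R$ then $\widetilde{rx}\leq\widetilde{x}\in\mathcal{I}$; and if $x^n\in I$ then $\widetilde{x}=\widetilde{x^n}\in\mathcal{I}$, so $I$ is a radical ideal. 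The description $I=\bigcup_{J\in\mathcal{I}}J$ is immediate from the definition of $\widetilde{x}$ and downward closure.

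Next I would verify that the two maps are mutually inverse. Starting from a radical $I$, forming $\mathcal{I}$ and then recovering $I'=\{x\mid \widetilde{x}\in\mathcal{I}\}$ gives back $I$: $x\in I'$ iff $\widetilde{x}\subseteq I$ iff $x\in I$ (using radicality of $I$). Starting from an ideal $\mathcal{I}$ of $\Zar(R)$, forming $I$ and then $\mathcal{I}'=\{J\in\Zar(R)\mid J\subseteq I\}$ gives back $\mathcal{I}$: any $J\in\Zar(R)$ is of the form $\widetilde{a_1}\vu\cdots\vu\widetilde{a_m}$, and $J\in\mathcal{I}'$ iff each $a_i\in I$ iff each $\widetilde{a_i}\in\mathcal{I}$ iff $J\in\mathcal{I}$ (using that $\mathcal{I}$ is closed under finite joins and downward closed).

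Finally, for the prime ideal correspondence, one uses $\widetilde{x}\vi\widetilde{y}=\widetilde{xy}$. If $I$ is a prime ideal of $R$ and $\widetilde{a}\vi\widetilde{b}\in\mathcal{I}$, this means $ab\in I$, so $a\in I$ or $b\in I$, hence $\widetilde{a}\in\mathcal{I}$ or $\widetilde{b}\in\mathcal{I}$; extending to arbitrary elements $J_1=\bigvee\widetilde{a_i}$, $J_2=\bigvee\widetilde{b_j}$ of $\Zar(R)$ with $J_1\vi J_2\in\mathcal{I}$, distributivity gives $\widetilde{a_ib_j}\in\mathcal{I}$ for all $i,j$, and a finite case analysis using primality at each $(i,j)$ yields either all $\widetilde{a_i}\in\mathcal{I}$ or some $\widetilde{b_j}\in\mathcal{I}$, whence $J_1\in\mathcal{I}$ or $J_2\in\mathcal{I}$. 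Conversely, if $\mathcal{I}$ is prime in $\Zar(R)$, $xy\in I$ gives $\widetilde{x}\vi\widetilde{y}=\widetilde{xy}\in\mathcal{I}$, so $\widetilde{x}\in\mathcal{I}$ or $\widetilde{y}\in\mathcal{I}$, i.e.\ $x\in I$ or $y\in I$. The condition $1\notin I$ matches $1_{\Zar(R)}\notin\mathcal{I}$.

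The only delicate point is the step from primality on generators $\widetilde{a}$ to primality on arbitrary lattice elements, where one must combine the distributive law with a finite induction on the number of joinands; but this is a standard manipulation in distributive lattices and presents no real obstacle.
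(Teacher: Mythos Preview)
Your proof is correct and follows essentially the same approach as the paper; in fact you give more detail, since the paper only writes out the primality correspondence and leaves the bijection itself as routine. One small slip: in the forward primality argument you write that the case analysis yields ``all $\widetilde{a_i}\in\mathcal{I}$ or \emph{some} $\widetilde{b_j}\in\mathcal{I}$,'' but you need \emph{all} $\widetilde{b_j}\in\mathcal{I}$ to conclude $J_2\in\mathcal{I}$; the finite induction you allude to does give this stronger conclusion, so it is just a wording error.
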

\begin{proof}{Proof}
We only prove the last assertion.
If $I$ is a prime ideal of $R$, if $J,J'\in \Zar(R)$ and
$J\vi J'\in \cal{I}$, let $a_1,\dots,a_n\in R$ be some ``generators'' of
$J$ (\cad $J= \sqrt{\gen{a_1,\dots,a_n}}$) and let $b_1,\dots,b_m\in A$ 
be some
generators of $J'.$ We have then
$a_ib_j\in I$ and hence $a_i\in I$ or $b_j\in I$ for all $i,j.$ It 
follows
from this (constructively) that we have $a_i\in I$ for all $i$ or
$b_j\in I$ for all $j$. Hence $J\in \cal{I}$ or $J'\in \cal{I}$ and
$\cal{I}$ is a prime ideal of $\Zar(R).$
\\
Conversely if $\cal{I}$ is a prime ideal of $\Zar(R)$
and if we have $\wi{xy}\in \cal{I}$ then
$\wi{x}\vi \wi{y}\in \cal{I}$ and hence $\wi{x}\in \cal{I}$
or $\wi{y}\in \cal{I}$. This shows that $\{ x\in A~|~\wi{x}\in 
\cal{I}\}$
is a prime ideal of $R$.
\end{proof}
\begin{definition}
\label{defKruA}
We define $\Kru_\ell(R):= \Kr_\ell(\Zar(R))$. This is called the
{\em Krull lattice of order $\ell$} of the ring $R$.
\end{definition}

\begin{theorem}
\label{corCollaps} Let $\cC= ((J_0,U_0),\ldots,(J_\ell,U_\ell))$ be
an \proc in a commutative ring $R$.
It collapses \ssi the \proc
$((\wi{J_0},\wi{U_0}),\ldots,
(\wi{J_\ell},\wi{U_\ell}))$  collapses in
$\Zar(R)$. For instance if $\cC$ is finite, \propeq
\begin{enumerate}
\item  there exist $j_i\in \gen{J_i}$, $u_i\in\cM(U_i)$,
$(i= 0,\ldots,\ell)$,
such that
$$u_0\cdot(u_1\cdot(\cdots(u_\ell+j_\ell)+\cdots)+j_1)+j_0= 0
$$
\item  there exist $x_1,\ldots,x_\ell\in \Zar(R)$ such that
in $\Zar(R)$:
$$\begin{array}{rcl}
 x_1,\; \widetilde{U_0}& \vda  &  \widetilde{J_0}
\\
 x_2,\; \widetilde{U_1}& \vda  &  \widetilde{J_1} ,\;  x_1
\\
\vdots\qquad & \vdots  & \qquad\vdots
\\
x_\ell,\; \widetilde{U_{\ell-1}}&\vda& \widetilde{J_{\ell-1}},\;x_{\ell-
1}
\\
\widetilde{U_\ell}& \vda & \widetilde{J_\ell} ,\; x_\ell
\end{array}$$
\item  same thing but with $x_1,\ldots,x_\ell\in \wi{A}$
\end{enumerate}
\end{theorem}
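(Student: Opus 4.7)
The plan is to establish the finite-case equivalences $(1)\Leftrightarrow (2)\Leftrightarrow (3)$; the general collapse statement then reduces to finite sub-chains, since any collapsus involves only finitely many elements of each $J_i$ and $U_i$. First I would observe that $(2)$ is just a restatement of ``$((\wi{J_0},\wi{U_0}),\ldots,(\wi{J_\ell},\wi{U_\ell}))$ collapses in $\Zar(R)$'' via Theorem~\ref{thColSimT2}(1) applied to the lattice $\Zar(R)$, and that $(3)\Rightarrow (2)$ is trivial. So the real task is $(1)\Leftrightarrow (3)$.

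For $(1)\Rightarrow (3)$, given a collapsus $u_0 t_1 + j_0 = 0$ where I recursively set $t_\ell = u_\ell + j_\ell$ and $t_i = u_i t_{i+1} + j_i$ for $i<\ell$, I take $a_i := t_i\in R$. The identity $u_i t_{i+1} = t_i - j_i$ places $u_i t_{i+1}$ in $\gen{J_i,t_i}$, so by Proposition~\ref{propZar} one has $\wi{t_{i+1}}\vi\wi{u_i} \leq \wi{t_i}\vu\Vu \wi{J_i}$ in $\Zar(R)$. Since $u_i\in\cM(U_i)$ implies $\Vi\wi{U_i}\leq \wi{u_i}$, the required entailment $\wi{a_{i+1}},\wi{U_i}\vda \wi{J_i},\wi{a_i}$ follows; the two boundary entailments arise the same way from $u_0 t_1 = -j_0\in\gen{J_0}$ and $u_\ell = t_\ell - j_\ell\in\gen{J_\ell,t_\ell}$.

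For the harder direction $(3)\Rightarrow (1)$, I translate each entailment via Proposition~\ref{propZar} into an explicit identity in $R$: $(a_{i+1}\prod U_i)^{n_i} = j_i^* + a_i c_i$ with $j_i^*\in\gen{J_i}$ and $c_i\in R$, together with the boundary versions $a_1^{n_0}(\prod U_0)^{n_0} = j_0^*\in\gen{J_0}$ and $(\prod U_\ell)^{n_\ell} = j_\ell^* + a_\ell c_\ell$. I then build the collapsus inductively from the inside out. The innermost identity gives $a_\ell c_\ell = u_\ell' + j_\ell'$ with $u_\ell'\in\cM(U_\ell)$, $j_\ell'\in\gen{J_\ell}$. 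Assuming $a_i d_i$ has been written in the nested form $u_i'(u_{i+1}'(\cdots(u_\ell'+j_\ell')+\cdots)+j_{i+1}')+j_i'$, I multiply the identity $(a_i\prod U_{i-1})^{n_{i-1}} = j_{i-1}^* + a_{i-1}c_{i-1}$ by $d_i^{n_{i-1}}$ and invoke the sublemma that this nested form is stable under raising to a power (since $(uY+j)^N = u^N Y^N + $ terms in $\gen{J}$, applied recursively level by level); this produces $a_{i-1}d_{i-1}$ again in the nested form, now at level $i-1$. At the final step, multiplying the level-$1$ expression $(a_1 d_1)^{n_0}$ by $(\prod U_0)^{n_0}$ and using $a_1^{n_0}(\prod U_0)^{n_0} = j_0^*$ eliminates $a_1$ and leaves the nested form at level $0$ equal to zero, which is exactly the collapsus.

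The main obstacle is the bookkeeping in $(3)\Rightarrow (1)$: tracking the exponents through successive power-raisings and multiplications, and verifying at each stage that the new factors remain in $\cM(U_k)$ and $\gen{J_k}$. Once the ``stability under powers'' sublemma is isolated, the induction is essentially routine; the rest of the argument reduces to applications of Proposition~\ref{propZar} and Theorem~\ref{thColSimT2}(1).
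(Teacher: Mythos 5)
Your constructions for $(1)\Rightarrow(3)$ and $(3)\Rightarrow(1)$ are sound: the first is exactly the paper's choice $v_\ell=u_\ell+j_\ell$, $v_i=u_iv_{i+1}+j_i$, $x_i=\wi{v_i}$, and your nested-form induction for $(3)\Rightarrow(1)$, with the stability-under-powers sublemma, is a correct and if anything more explicitly computational variant of the paper's argument. The genuine gap is in the overall logic: you never prove $(2)\Rightarrow(1)$ or $(2)\Rightarrow(3)$. You treat $(2)$ as ``just a restatement'' of the collapse of $((\wi{J_0},\wi{U_0}),\ldots,(\wi{J_\ell},\wi{U_\ell}))$ in $\Zar(R)$ --- which is true by theorem \ref{thColSimT2}(1) --- but that does not make it interchangeable with $(3)$: in $(2)$ the $x_i$ are arbitrary elements of $\Zar(R)$, i.e.\ radicals of finitely generated ideals, while in $(3)$ they are principal radicals $\wi{a_i}$. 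With only $(1)\Leftrightarrow(3)\Rightarrow(2)$ in hand, both the three-way equivalence and the ``if'' direction of the main assertion (collapse of the tilde-chain in $\Zar(R)$ implies collapse of $\cC$ in $R$) remain unproved.

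The missing implication is not covered by your bookkeeping as written: if $x_i=\sqrt{\gen{a_{i,1},\ldots,a_{i,k_i}}}$, the translation of the $i$-th entailment gives, for each generator $b$ of $x_{i+1}$, an identity $(b\prod U_i)^{n}\in \gen{J_i}+\gen{a_{i,1},\ldots,a_{i,k_i}}$, so the right-hand side carries several generators and your single-element induction does not apply directly. The paper closes this loop in two ways: it reduces $(2)$ to $(3)$ by working from the last entailment upward --- the relation $\wi{U_\ell}\vda \wi{J_\ell},x_\ell$ yields a single element $y_\ell$ of the finitely generated radical ideal $x_\ell$ with $\wi{U_\ell}\vda \wi{J_\ell},\wi{y_\ell}$, and since $\wi{y_\ell}\leq x_\ell$ one may replace $x_\ell$ by $\wi{y_\ell}$ on the left of the previous entailment, then repeat for $x_{\ell-1},\ldots,x_1$ --- and it also gives a direct proof of $(2)\Rightarrow(1)$ via an inductively defined chain of radical ideals $I_0,\ldots,I_\ell$ contained in the saturation of $\cC$, condition $(2)$ forcing $\cM(U_\ell)$ to meet $I_\ell$ and hence the chain to collapse. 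Adding either of these arguments (or adapting your induction to several generators on the right) is needed to make the proposal complete.
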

\begin{proof}{Proof}
It is clear that $1$ entails $3$: simply take
$$v_\ell =  u_\ell+j_\ell,~v_{\ell-1} =  v_\ell u_{\ell-1} +
j_{\ell-1},\dots,~v_0 =  v_1u_0 + j_0
~~~~{\rm  and}~~~ x_i=  \wi{v_i}
$$
and that $3$ entails $2$.
The fact that $1$ follows from $2$ can be seen by reformulating
$2$ in the following way. We consider the
\proc $\cC_1= ((K_0,V_0),\ldots,(K_\ell,V_\ell))$ obtained by 
saturating
the \proc $\cC$.
We define the $\ell+1$ radical ideals
$I_0,\dots,I_{\ell}$ of $R$
\begin{itemize}
\item $I_0 =  \{x\in A~|~\cM(x,U_0)\cap \gen{J_0} \neq \emptyset \}$
\item $I_1 =  \{x\in A~|~\cM(x,U_1)\cap (\gen{J_1} + I_0) \neq 
\emptyset\}$
\item ~~$\vdots$
\item $I_{\ell-1} =  \{x\in A~|~\cM(x,U_{\ell-1})\cap (\gen{J_{\ell-1}} 
+
I_{\ell-2}) \neq \emptyset \}$
\item $I_\ell= \gen{J_\ell}+I_{\ell-1}$
\end{itemize}
It is clear that $I_i\subseteq K_i$ ($i= 0,\ldots,\ell$).
In the correspondance given in \ref{propZar2} these ideals
correspond to the following ideals of $\Zar(R)$
  \begin{itemize}
\item ${\cI}_0 = 
  \{u\in \Zar(R)~|~ u,\;  \widetilde{U_0} \vda  \widetilde{J_0}\}$
\item ${\cI}_1 = 
  \{u\in \Zar(R)~|~ (\exists v\in {\cI}_0)~
                    u,\;  \widetilde{U_1} \vda \widetilde{J_1},\;
v\}$
\item ~~$\vdots$
\item ${\cI}_{\ell-1} = 
  \{u\in \Zar(R)~|~ (\exists v\in {\cI}_0)~
                     u,\;  \widetilde{U_{\ell-1}} \vda
\widetilde{J_{\ell-1}},\; v\}$
\end{itemize}
The condition $2$ becomes then
$\;\widetilde{U_{\ell}}\vda  \widetilde{J_{\ell}},\; v\;$ for some
$\;v\in {\cI}_{\ell-1}$.
This means that $\cM(U_{\ell})$ intersects
$I_{\ell}$, or $I_{\ell}\subseteq K_{\ell}$.
Hence $\cC_1$ collapses, and hence $\cC$ collapses.

Let us give another direct proof that $(2)$ implies $(3)$.
We rewrite the \entrels of (2) as follows.
Each $\wi{U_i}$ can be replaced by a $\wi{u_i}$ with $u_i\in R$,
each $\wi{J_i}$ can be replaced by a radical of \itf $I_i$  of
$R$, and we write $L_i$ instead of $x_i$ to indicate that this  is
a radical of a  \itfz. We get:
$$\begin{array}{rcl}
 L_1,\; \widetilde{u_0}& \vda  &  I_0  \\
 L_2,\; \widetilde{u_1}& \vda  &  I_1 ,\;  L_1  \\
 L_3,\; \widetilde{u_2}& \vda  &  I_2 ,\; L_2  \\
\widetilde{u_3}& \vda & I_3 ,\; L_3
\end{array}$$
The last line means that $\cM(u_3)$ intersects $I_3+ L_3$
and hence  $I_3+ \gen{y_3}$ for some element $y_3$ of
$L_3$. Hence we have
$ \wi{u_3} \vda  I_3 ,\; \wi{y_3}$.
Since $\wi{y_3}\leq L_3$ in $\Zar(R)$ we have
 $ \;\wi{y_3},\; \wi{u_2} \vda  I_2 ,\; L_2$.
We have then replaced  $L_3$ by $\wi{y_3}$.
Reasoning as previously one sees that one can replace as
well $L_2$ by a suitavle $\wi{y_2}$, and then
$L_1$ by a suitable $\wi{y_1}$. One gets then (3).
\end{proof}

\begin{corollary}
\label{corCompDim}
The Krull dimension of a commutative ring $R$ is $\leq \ell$ \ssi
the Krull dimension of its Zariski lattice $\Zar(R)$ is $\leq \ell$.
\end{corollary}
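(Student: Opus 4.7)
The plan is to derive the corollary directly by combining Theorem \ref{corCollaps} with the generator-based characterization of dimension given in Lemma \ref{lemDimGen}. The key observation is that an elementary idealistic chain $\overline{(x_1,\ldots,x_{\ell+1})}$ in $R$ corresponds, under the map $a\mapsto \wi{a}$, to the elementary chain $((0,\wi{x_1}),(\wi{x_1},\wi{x_2}),\ldots,(\wi{x_{\ell+1}},1))$ in $\Zar(R)$, since $\wi{0}$ is the bottom of $\Zar(R)$ and $\wi{1}$ is the top. By Theorem \ref{corCollaps}, these two chains collapse simultaneously.

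For the direction from ring to lattice: assume the Krull dimension of $R$ is $\leq \ell$, so every elementary chain $\overline{(x_1,\ldots,x_{\ell+1})}$ in $R$ collapses. By the correspondence above, every elementary chain in $\Zar(R)$ built from elements of the form $\wi{a}$ (with $a\in R$) collapses. Now I would invoke the fact that $\Zar(R)$ is generated as a distributive lattice by the set $\wi{R}=\{\wi{a}\mid a\in R\}$ — every element of $\Zar(R)$ is the radical of a finitely generated ideal $\sqrt{\gen{a_1,\ldots,a_n}}=\wi{a_1}\vu\cdots\vu\wi{a_n}$, and the map $\wi{\cdot}$ is clearly compatible with meets since $\wi{ab}=\wi{a}\vi\wi{b}$. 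Lemma \ref{lemDimGen} then ensures that the dimension condition in $\Zar(R)$, which a priori must be tested on all elementary chains, reduces to testing it on chains built from the generating set $\wi{R}$. This gives $\dim(\Zar(R))\leq\ell$.

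The converse is immediate. If $\dim(\Zar(R))\leq\ell$, then every elementary chain of length $\ell+1$ in $\Zar(R)$ collapses, and in particular every chain of the form $((0,\wi{x_1}),\ldots,(\wi{x_{\ell+1}},1))$ coming from a sequence $(x_1,\ldots,x_{\ell+1})$ in $R$. Applying Theorem \ref{corCollaps} once more, the corresponding ring chain $\overline{(x_1,\ldots,x_{\ell+1})}$ collapses, so $\dim(R)\leq\ell$.

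There is essentially no obstacle here, as the real work has been done in Theorem \ref{corCollaps} (transferring collapse between $R$ and $\Zar(R)$) and Lemma \ref{lemDimGen} (reduction to generators). The only delicate point, which I would verify carefully, is that the two notions of elementary chain — one in $R$ and one in a distributive lattice — match under $\wi{\cdot}$, and in particular that $\wi{0}=0_{\Zar(R)}$ and $\wi{1}=1_{\Zar(R)}$, so the boundary pairs $(0,x_1)$ and $(x_{\ell+1},1)$ of the $R$-chain go to the expected boundary pairs of the lattice chain.
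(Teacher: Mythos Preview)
Your proposal is correct and follows exactly the paper's own approach: the paper's proof reads simply ``By the previous theorem and lemma \ref{lemDimGen},'' and you have accurately unpacked what this means, including the observation that $\wi{R}$ generates $\Zar(R)$ so that Lemma \ref{lemDimGen} reduces the lattice dimension test to generators, and that Theorem \ref{corCollaps} transfers collapse between $R$ and $\Zar(R)$.
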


\begin{proof}{Proof}
By the previous theorem and lemma \ref{lemDimGen}.
\end{proof}

 This would be a natural place to relate decidability
properties of $R$ and of  $\Kr_n(\Zar(R))$. For instance,
it can be shown that if $R$ is coherent, noetherian and
strongly discrete then each of the $\Kr_n(\Zar(R))$ is decidable.
Due to lack of space, we shall not present these results here.

\section{Going Up and Going Down}
\label{secGUGD}
\subsection{Relative Krull dimension}
\label{subsecKrullRel}
\subsubsection*{General remarks about relative Krull dimension}

We shall develop here a constructive counterpart of the notion of
increasing chain of prime ideals which all lie over the same
prime ideal of a given subring.
This paragraph can apply as well to the case of an arbitary \trdi
(here it is $\Zar(R)$) with evident modifications. There is no
real computations going on, just some simple combinatorics.
\begin{definition}
\label{defColRel}
Let $R\subseteq S$ be two commutative rings and
$\cC= ((J_0,U_0),\ldots,(J_\ell,U_\ell))$ an \proc in $S$.
\begin{itemize}
\item {\em The \proc $\cC$ collapses above $R$} \ssi there exist
$a_1,\ldots,a_k\in R$ such that for all couple of complementary subsets
$(H,H')$ of $\{1,\ldots,k\}$, the \proc
$$ (\{(a_h)_{h\in H}\}\cup J_0,U_0),(J_1,U_1)\ldots,(J_\ell,U_\ell\cup
\{(a_h)_{h\in H'}\})
$$
collapses.
\item {\em The (relative) Krull dimension of the extension $S/R$
is $\leq  \ell-1$} \ssi any \proel  
$((0,x_1),(x_1,x_2),\ldots,(x_\ell,1))$
collapses above $R$.
\item {\em The (relative) Krull dimension of the extension $S/R$
is $\geq \ell$} \ssi there exist $x_0,\ldots,x_\ell$ in $S$ such that 
the
\proel $((0,x_1),(x_1,x_2),\ldots,(x_\ell,1))$
does not collapse{\rm (}{\footnote{~More precisely, constructively,
we have to say: for any $k$ and any $a_1,\ldots,a_k\in R$
there exist a pair of complementary subsets $(H,H')$ of 
$\{1,\ldots,k\}$,
such that the \proc
$$ (\{(a_h)_{h\in H}\};x_1),(x_1,x_2),\ldots,(x_\ell;\{(a_h)_{h\in 
H'}\})
$$
``does not collapse'' with the meaning of the 
inequality relation defined over $R$
(cf. the explanation in the beginning of section \ref{subsecPsr} page
\pageref{nbpIneq}).}}{\rm )} above $R$.
\item {\em The (relative) Krull dimension of the extension $S/R$
is $< \ell$} \ssi it is impossible that it is $\geq \ell$.
\item {\em The (relative) Krull dimension of the extension $S/R$
is $> \ell$} \ssi it is impossible that it is $\leq \ell$.
\end{itemize}
\end{definition}
One can consider a more general case of a ring extension: a map
$R\rightarrow S$ non necessarily injective.
It is possible to adapt the previous definition by replacing $R$
by its image in $S$.

One has a relative simultaneous collapse theorem.
\begin{theorem}
\label{thColSimRel} {\em (Relative simultaneous collapse for the 
\procsz)}
Let $R\subseteq S$ be two commutative rings and $\cC$ an \prolo $\ell$ 
in
$S$.
\begin{itemize}
\item [$(1)$] Take $x\in S$ and $i\in\left\{0,\ldots ,\ell\right\}$.
Suppose that the \procs
$\cC\,\& \left\{x\in \cC^{(i)} \right\} $
and
$\cC\,\& \left\{x\notin \cC^{(i)} \right\} $
both collapse above $R$, then so does $\cC$.
\item [$(2)$] Take $x\in R$. Suppose that the \procs
$\cC\,\& \left\{x\in \cC^{(0)} \right\} $
and
$\cC\,\& \left\{x\notin \cC^{(\ell)} \right\} $
both collapse above $R$, then so does $\cC$.
\end{itemize}
\end{theorem}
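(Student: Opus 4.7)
The plan is to reduce both parts to the absolute simultaneous collapse theorem \ref{ThColSimKrA} combined with the stability of collapsing under refinement (fact \ref{factColAc}), by careful bookkeeping of the witness sequences in $R$ that certify the relative collapses.

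For part $(1)$, let $a_1,\ldots,a_k\in R$ be witnesses that $\cC$ together with the constraint $x\in\cC^{(i)}$ collapses above $R$, and let $b_1,\ldots,b_m\in R$ be witnesses for the constraint $x\notin\cC^{(i)}$. I would try the concatenation $a_1,\ldots,a_k,b_1,\ldots,b_m$ as relative witnesses for $\cC$. Any complementary pair $(H,H')$ of $\{1,\ldots,k+m\}$ restricts to pairs $(H_1,H_1')$ of $\{1,\ldots,k\}$ and $(H_2,H_2')$ of $\{k+1,\ldots,k+m\}$. Applying the first hypothesis to $(H_1,H_1')$ gives the collapse of $\cC$ augmented by $\{a_h:h\in H_1\}$ in $J_0$, by $\{a_h:h\in H_1'\}$ in $U_\ell$ and by $x$ in $J_i$; by fact \ref{factColAc} the further refinement adjoining $\{b_h:h\in H_2\}$ to $J_0$ and $\{b_h:h\in H_2'\}$ to $U_\ell$ still collapses. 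Symmetrically, the second hypothesis applied to $(H_2,H_2')$, then refined by the matching $a$-parts, yields the collapse of the same doubly augmented chain, but with $x$ adjoined to $U_i$ rather than to $J_i$. The absolute simultaneous collapse theorem \ref{ThColSimKrA}$(1)$, applied at position $i$ with element $x$, then forces the collapse of the doubly augmented chain without any adjunction of $x$, which is exactly what the partition $(H,H')$ demands.

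For part $(2)$ the argument is more direct because $x\in R$ may itself be adjoined to the witness list. Let $a_1,\ldots,a_k$ witness the collapse above $R$ with the constraint $x\in\cC^{(0)}$ and let $b_1,\ldots,b_m$ witness the collapse above $R$ with the constraint $x\notin\cC^{(\ell)}$. I would use $a_1,\ldots,a_k,b_1,\ldots,b_m,x$ as candidate relative witnesses for $\cC$ itself, a legal choice since $x\in R$. For a complementary pair of $\{1,\ldots,k+m+1\}$ one splits into two cases according to whether the last index (the one corresponding to $x$) falls into $H$ or into $H'$. In the first case, $x$ is adjoined to $J_0$, and the chain required to collapse is a refinement of the one produced by the first hypothesis applied to the restricted $a$-partition, enlarged by the $b$-parts. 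In the second case, $x$ is adjoined to $U_\ell$ and one refines the chain produced by the second hypothesis symmetrically. No appeal to simultaneous collapse is needed here, only stability under refinement.

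The main delicate point is the combinatorial bookkeeping: one must verify, partition by partition, that the augmentations used to bring the two hypothesised relative collapses into a common form are genuine refinements of the target chains, and that every complementary pair of the concatenated witness list is dealt with. Once this is set up the essential mathematical input is small: the absolute simultaneous collapse theorem \ref{ThColSimKrA}$(1)$ at position $i$ for part $(1)$, and only refinement invariance for part $(2)$. The hypothesis $x\in R$ in part $(2)$ intervenes solely to make $x$ admissible as an additional element of the witness list in the definition of collapse above $R$.
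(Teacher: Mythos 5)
Your proof is correct and follows exactly the route the paper intends, since the paper's own ``proof'' merely states that Theorem \ref{thColSimRel} is an easy consequence of the non-relative Theorem \ref{ThColSimKrA} and leaves the details to the reader. Your bookkeeping with concatenated witness lists, stability of collapse under refinement (Fact \ref{factColAc}), and the application of Theorem \ref{ThColSimKrA}(1) at position $i$ for part (1) --- with $x$ itself adjoined to the witness list in part (2) --- supplies precisely those omitted details.
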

This is an easy consequence of the (non relative) theorem
\ref{ThColSimKrA}, that is left to the reader.
  From this, we deduce (classically)
a characterisation of the  \procs which collapse relatively.

\begin{theorem}
\label{th.nstformelRel} {\em (Formal \nst for the chains of prime ideals
in a ring extension)} \Tcgi Let $R\subseteq S$ be commutative rings
and $\cC= ((J_0,U_0),\ldots,(J_\ell,U_\ell))$ an
\proc in $S$. \Propeq
\begin{itemize}
\item [$(a)$] There exists a detachable prime ideal $P$ of $R$ and
$\ell+1$ detachable prime ideals
$P_0\subseteq \cdots\subseteq P_\ell$ of $S$ such taht
$J_i\subseteq P_i$, $U_i\cap P_i= \emptyset $ and $P_i\cap A= P$
$(i= 0,\ldots,\ell)$.
\item [$(b)$] The \proc $\cC$ does not collapse above $S$.
\end{itemize}
\end{theorem}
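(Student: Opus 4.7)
The plan is to mirror the proof of Theorem \ref{th.nstformel}, but substituting the relative simultaneous collapse Theorem \ref{thColSimRel} for its non-relative counterpart \ref{ThColSimKrA}; the new ingredient is handling the constraint $P_i\cap R = P$.

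The direction $(a)\Rightarrow(b)$ is direct. Given prime ideals $P\subseteq R$ and $P_0\subseteq\cdots\subseteq P_\ell$ in $S$ with $P_i\cap R=P$, take any $a_1,\ldots,a_k\in R$ and form the partition $H=\{h:a_h\in P\}$, $H'=\{h:a_h\notin P\}$. Each $a_h$ with $h\in H$ lies in $P\subseteq P_0$, while each $a_h$ with $h\in H'$ fails to lie in $P=P_\ell\cap R$, hence fails to lie in $P_\ell$. So the refined chain obtained by adjoining $\{a_h:h\in H\}$ to $J_0$ and $\{a_h:h\in H'\}$ to $U_\ell$ is still consistent with the prime chain $(P_0,\ldots,P_\ell)$, and Theorem \ref{th.nstformel} says it does not collapse. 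By Definition \ref{defColRel}, this one non-collapsing partition prevents $\cC$ from collapsing above $R$.

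For $(b)\Rightarrow(a)$ I imitate the compactness argument used for Theorem \ref{th.nstformel}. Introduce the propositional theory with atomic propositions ``$x\in P_i$'' for $x\in S$ and $0\le i\le\ell$, whose axioms express: each $P_i$ is a proper detachable prime ideal of $S$; $x\in P_i\to x\in P_{i+1}$; $x\in P_i$ for $x\in J_i$ and $\neg(x\in P_i)$ for $x\in U_i$; and a lying-over schema $x\in P_\ell\to x\in P_0$, one instance per $x\in R$. Combined with monotonicity, the lying-over axioms force $P_0\cap R=\cdots=P_\ell\cap R$, and a model of this theory provides the prime ideals of $(a)$ together with their common contraction $P$.

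The heart of the proof --- and the main obstacle --- is consistency; the \tcg then supplies a model. Let $\Sigma_0$ be a finite fragment mentioning only finitely many elements $a_1,\ldots,a_k\in R$ of the lying-over schema. If $\Sigma_0$ were inconsistent, then by a case-split on ``$a_h\in P_0$'' versus ``$a_h\notin P_\ell$'' for each $h$, one would obtain, for \emph{every} complementary pair $(H,H')$ of $\{1,\ldots,k\}$, an inconsistent fragment of the non-relative theory for the chain refined by adjoining $\{a_h:h\in H\}$ to $J_0$ and $\{a_h:h\in H'\}$ to $U_\ell$. By the non-relative formal \nst (or directly by Theorem \ref{ThColSimKrA}), each such inconsistency certifies a collapse of the corresponding refined chain in $S$. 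By Definition \ref{defColRel}, this is exactly a collapse of $\cC$ above $R$, contradicting $(b)$. Theorem \ref{thColSimRel} is what packages this combinatorial case-split cleanly: it is the relative lifting of \ref{ThColSimKrA} and its point $(2)$ tells us that if both $\cC\,\&\,\{a\in\cC^{(0)}\}$ and $\cC\,\&\,\{a\notin\cC^{(\ell)}\}$ collapse above $R$ for $a\in R$, then so does $\cC$; iterating this $k$ times reduces consistency of $\Sigma_0$ precisely to the hypothesis (b). Once consistency is in hand, compactness produces the desired detachable prime ideals $P_i$ of $S$, and $P:=P_0\cap R$ satisfies $P_i\cap R=P$ by the lying-over axioms.
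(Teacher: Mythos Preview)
Your proof is correct, but it takes a different route from the paper's. For $(b)\Rightarrow(a)$ the paper gives only the Zorn-plus-excluded-middle argument: one takes an \proc $\cC_1=((P_0,S_0),\ldots,(P_\ell,S_\ell))$ maximal among refinements of $\cC$ that do not collapse above $R$, argues via Theorem~\ref{thColSimRel} (exactly as in the proof of Theorem~\ref{th.nstformel}) that $\cC_1$ is a chain of detachable primes with their complements, and then checks $S_0\cap P_\ell\cap R=\emptyset$ by observing that any $x$ in this set would witness a collapse of $\cC_1$ above $R$ via the singleton $\{x\}$. You instead run the compactness-theorem variant that the paper gave for Theorem~\ref{th.nstformel} but omitted here: you build the lying-over axioms $x\in P_\ell\to x\in P_0$ ($x\in R$) directly into the propositional theory, and reduce the consistency of a finite fragment, by the $2^k$-fold case split on the $k$ lying-over instances present, to the non-collapse of the refined chains appearing in Definition~\ref{defColRel}. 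Both arguments hinge on the same combinatorial core (Theorem~\ref{thColSimRel}/Definition~\ref{defColRel}); your route trades Zorn's lemma for the compactness theorem and makes the link to the definition of relative collapse more transparent, while the paper's route is shorter and yields the maximal chain in one stroke. One minor remark: your first paragraph already contains the complete consistency argument via Definition~\ref{defColRel}; the subsequent appeal to iterating Theorem~\ref{thColSimRel}(2) is a correct alternative packaging (absolute collapse of each refined chain trivially implies collapse above $R$), but it is not needed once the direct argument is in place.
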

\begin{proof}{Proof}
We have clearly $(a)\Rightarrow (b)$. For proving $(b)\Rightarrow (a)$ 
we
do the (easier) proof which relies on \pte and Zorn's lemma.
We consider a maximal \proc
$\cC_1= ((P_0,S_0),\ldots,(P_\ell,S_\ell))$
(for the extension relation) among all the \procs which refines $\cC$
and that do not collapse above $R$. Given the relative simultaneous 
collapse
theorem, the same proof that in theorem \ref{th.nstformel} shows that
it is an increasing  chain of prime ideals (with their complements).
It is left to show that all $P_i\cap A$  are equals, which is equivalent 
to
$S_0\cap P_\ell\cap A= \emptyset$.
If this was not so we would have $x\in S_0\cap P_\ell\cap A$.
Then $((P_0\cup \{x \},S_0),\ldots,(P_\ell,S_\ell))$ and
$((P_0,S_0),\ldots,(P_\ell,S_\ell\cup \{x \}))$ collapses (absolutely) 
and
hence
$\cC_1$ collapses above $A$  (with the finite subset $\{x\}$). This is 
absurd.
\end{proof}

Constructively we have the following result. We omit the proof for
reason of space.
\begin{theorem}
\label{thDim1}
Let $R\subseteq S$ be commutative rings.
\begin{itemize}
\item [$(1)$] Suppose that the Krull dimension of $R$ is $\leq m$
and that the relative Krull dimension of the extension $S/R$ is $\leq 
n$,
then the Krull dimension of $B$ is $\leq (m+1)(n+1)-1$.
\item [$(2)$] Suppose that $R$ and $S$ have an inequality $\neq0$
defined as the negation of $= 0$.
Suppose that the Krull dimension of the extension $S/R$ is
 $\leq n$ and that the collapse of \proels in $R$ is decidable. Given
a pseudo regular sequence of length $(m+1)(n+1)$  in $B$,
on can build a pseudo regular sequence of length $m+1$ in $R$.
\end{itemize}
\end{theorem}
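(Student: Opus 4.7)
For part (1), I would proceed by induction on $m$. The base case $m=0$ reduces to the relative hypothesis combined with the fact that $\dim R\le 0$ provides equations $a^N(1-ba)=0$ in $R$ for every $a$: starting from the relative collapse of an elementary chain of length $n+1$ in $S$ with witnesses $a_1,\dots,a_k\in R$ and iterating the simultaneous collapse theorem \ref{ThColSimKrA} at the endpoints of the chain, the dimension-zero relations allow one to eliminate the witnesses from the collapse equation. For the inductive step from $m-1$ to $m$, given $(x_1,\dots,x_{(m+1)(n+1)})\in S$, apply the relative hypothesis to the final block $(x_{m(n+1)+1},\dots,x_{(m+1)(n+1)})$ to obtain witnesses $a_1,\dots,a_k\in R$ such that for every partition $(H,H')$ the corresponding augmented block chain collapses in $S$. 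Iterating the simultaneous collapse theorem at positions $m(n+1)$ and $(m+1)(n+1)$ of $\cC_0$ for each $a_h$, the collapse of $\cC_0$ reduces to finitely many branches. In the branches $a_h\in J_{m(n+1)}$ or $a_h\in U_{(m+1)(n+1)}$ the relative collapse applies directly: the last block sub-chain of $\cC_0$ is a refinement of the corresponding augmented block elementary chain, hence collapses, and fact \ref{factColAc} lifts this to $\cC_0$. In the remaining ``intermediate'' branches ($a_h\in U_{m(n+1)}\cap J_{(m+1)(n+1)}$) the witness $a_h$ corresponds to a prime gap in $R$, and by the local-global principle (proposition \ref{propKrLocGlob}) together with corollary \ref{corS} the initial $m(n+1)$-long portion of $\cC_0$ reduces to a chain over a comaximal localization of $R$ of Krull dimension $\le m-1$, to which the inductive hypothesis applies.

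For part (2), I would run the argument of (1) constructively. Decidability of collapse of elementary chains in $R$ makes each branching step effective. Given a pseudo regular sequence $(y_1,\dots,y_{(m+1)(n+1)})$ in $S$, at the $i$-th iteration the relative hypothesis applied to the $i$-th block produces candidate witnesses in $R$, and the decidability test selects $b_i\in R$ such that the residual chain in $S$ with $b_i$ placed at the appropriate intermediate position between blocks remains non-collapsing. After $m+1$ iterations $(b_0,\dots,b_m)$ is an elementary chain of length $m+1$ in $R$ whose non-collapse is forced by the inductive selection: a collapse of $(b_0,\dots,b_m)$ in $R$, recomposed with the per-block relative collapses of $S$, would certify a collapse of $(y_1,\dots,y_{(m+1)(n+1)})$ in $S$, contradicting the assumed pseudo regularity with respect to the inequality $\neq_S 0$.

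The main obstacle is the inductive step of (1), specifically the intermediate branches where a witness $a_h$ lies between two blocks: making precise that this situation corresponds to a prime gap in the chain $Q_i=P_i\cap R$ of $R$, and that the appropriate comaximal localization (via corollary \ref{corS} and proposition \ref{propKrLocGlob}) genuinely reduces $\dim R$ by one so that the inductive hypothesis applies; a naive quotient $R/\langle a_h\rangle$ need not drop the dimension, so the reduction must be phrased in terms of the local-global principle rather than a single quotient. The parallel subtlety for (2) is that the constructive extraction of each $b_i$ must yield a genuine non-zero element of $R$, which follows from the decidability of collapse and the hypothesis that $\neq_R 0$ is the negation of equality in $R$.
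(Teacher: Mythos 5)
Note first that the paper itself omits the proof of this theorem (``We omit the proof for reason of space''), so your proposal has to stand on its own. Its sound parts: the base case $m=0$ (a witness $a$ with $a^N(1+ca)=0$ in $R$ does kill the branch $a\in U_0$, $a\in J_{n+1}$, via the conjugation rules in the saturation), and the branching scheme itself, i.e.\ splitting each witness at the two junction positions by theorem \ref{ThColSimKrA} and disposing of the branches $a_h\in J_{m(n+1)}$ or $a_h\in U_{(m+1)(n+1)}$ by the relative collapse plus fact \ref{factColAc}. The gap is exactly where you flag it, and the mechanism you propose for it cannot work: no member of a \com family of localizations of $R$ can be guaranteed to have dimension $\le m-1$, since by proposition \ref{propKrLocGlob} itself, if every localization in a \com family had dimension $\le m-1$ then $R$ would too. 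Concretely, for a single witness $a_h$ corollary \ref{corS} gives the \com pair $\cM(a_h)$ and $1+\gen{a_h}$, and for $R=k[t,u]$, $a_h=t$, both $R[1/t]$ and $R_{1+tR}$ still have dimension $2$. What does drop the dimension is the localization at the ``boundary'' monoid $\cM(a_h)+\gen{a_h}$ (equivalently $a_h^{\N}(1+a_hR)$), which is the composite of the two \com monoids, not one of them; and even with it you would still need two facts not in the paper nor in your sketch (that $\dim R\le m$ forces this localization to have dimension $\le m-1$, and that the relative dimension $\le n$ is preserved by it), plus a splicing step combining the collapse obtained there, whose certificate involves an element $a_h^k(1+a_hb)$ in the junction monoid, with $a_h\in J_{(m+1)(n+1)}$. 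So as written the inductive step does not go through, and part (2) inherits the same missing recomposition step.

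There is a repair that needs no induction on $m$ and no localization at all, and it stays entirely inside the tools of section \ref{secKrA}. Cut the sequence into $m+1$ blocks of length $n+1$ and apply the relative hypothesis to \emph{every} block, obtaining witnesses in $R$ for each; branch each witness at the two endpoints of its own block. A leaf survives the per-block relative collapses only if every block $j$ retains at least one ``intermediate'' witness $a^{(j)}$ with $a^{(j)}\in U_{j(n+1)}$ and $a^{(j)}\in J_{(j+1)(n+1)}$. Now apply $\dim R\le m$ to the sequence $(a^{(0)},\dots,a^{(m)})$ of length $m+1$: writing $t_m=(a^{(m)})^{e_m}(1+c_ma^{(m)})$ and $t_j=(a^{(j)})^{e_j}(t_{j+1}+c_ja^{(j)})$, the resulting identity $t_0=0$ pushed through the saturated leaf chain with the conjugation rules (monoid element times something in the ideal puts that something in the ideal; ideals increase, monoids decrease) gives successively $t_{j+1}\in I_{(j+1)(n+1)}$ and finally $1\in I_{(m+1)(n+1)}$, so the leaf collapses by facts \ref{factColAc} and \ref{corColSimKrA}. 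This is exactly the pattern of your base case and of the proof of proposition \ref{propKrRelPol}. It also yields (2) directly: if all of the finitely many one-witness-per-block tuples collapsed in $R$, the above would collapse the \proel of the given sequence in $S$, contradicting its pseudo regularity; since collapse of \proels in $R$ is decidable, testing the tuples produces one that does not collapse, i.e.\ a pseudo regular sequence of length $m+1$ in $R$ -- no adaptive block-by-block selection is needed.
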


\subsubsection*{Case of integral extensions }

In the following proposition $(1)$ is the constructive version
of the ``incompatibility theorem'' (theorem 13.33 in the book
of Sharp cite{Sha}).

\begin{proposition}
\label{propDim2}  Let $R\subseteq S$ be commutative rings.
\begin{itemize}
\item [$(1)$] If $S$ is integral over $R$ the relative Krull dimension
of the extension $S/R$ is $0$.
\item [$(2)$] More generally we have the same result if
any element of $S$ is a zero of a polynomial in $R[X]$ which has
a coefficient equal to $1$. For instance if $R$ is
a Pr\"ufer domain, this applies to any overring of $R$
in its quotient field.
\item [$(3)$] In particular, using theorem \ref{thDim1}
if  $\dim(R)\leq n$ then $\dim(S)\leq n$.
\end{itemize}
\end{proposition}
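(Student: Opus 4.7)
The plan is to prove $(1)$ by exhibiting the coefficients of an integral relation as concrete witnesses, then to observe $(2)$ follows by a purely cosmetic adjustment, and finally to deduce $(3)$ from Theorem~\ref{thDim1}(1).

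For $(1)$, I would fix $x \in S$ and a monic integral relation $x^N + a_{N-1}x^{N-1} + \cdots + a_0 = 0$ with $a_i \in R$, and propose the list $a_0,\ldots,a_{N-1}$ as the witnesses from $R$ required by Definition~\ref{defColRel} to certify that the \proel $((0,x),(x,1))$ collapses above $R$. Given any partition $\{0,\ldots,N-1\} = H \sqcup H'$, it then suffices to produce a collapsus of the form
\[ x^{m_0}(u_1 + j_1) + j_0 = 0, \qquad u_1 \in \cM(\{a_h\}_{h\in H'}),\ j_1 \in \gen{x}_S,\ j_0 \in \gen{\{a_h\}_{h\in H}}_S. \]
The natural move is to set $m := \min(H' \cup \{N\})$ and to re-group the integral equation as
\[ x^m(c_m + x\eta) + \sum_{h\in H}a_h x^h = 0, \]
where $c_m = a_m$ if $m \in H'$ and $c_m = 1$ if $m = N$ (the case $H' = \emptyset$), and $\eta \in S$ collects the remaining higher-degree tail. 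This identity is exactly the desired collapsus.

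For $(2)$ the same computation applies verbatim once the monic top monomial $x^N$ is replaced by the monomial $x^j$ associated to a coefficient $a_j = 1$, the remaining $\{a_i\}_{i\ne j}$ serving as the splitting witnesses. The Pr\"ufer consequence then reduces to the standard observation that every element of the quotient field of such a domain satisfies a polynomial relation over $R$ in which some coefficient equals $1$. Part $(3)$ finally combines $(1)$ -- which asserts that the relative Krull dimension of $S/R$ is $\leq 0$ -- with Theorem~\ref{thDim1}(1): choosing the theorem's parameters to be $m := n$ and relative bound $0$, its displayed inequality reads $\dim(S) \leq (n+1)(0+1) - 1 = n$.

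The principal obstacle is a purely combinatorial one: ensuring that the collapsus identity is valid \emph{uniformly} in the partition $(H,H')$. The delicate case is $H' = \emptyset$, where $\cM(\{a_h\}_{h\in H'})$ reduces to $\{1\}$; it is precisely the monic (or, in $(2)$, unit) coefficient that supplies the required $u_1 = 1$, and this is where integrality (respectively, the hypothesis of $(2)$) actually enters.
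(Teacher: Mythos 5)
Your proposal is correct and follows essentially the same route as the paper: the paper also takes the coefficients of the relation as the finite witness list and, for each splitting $(G,G')$, produces the collapsus $x^m(g'+bx)=g$ by taking $m$ to be the smallest index occurring in $G'$ together with the index of the coefficient $1$ (using $g'=1$ when $G'$ is empty), which is exactly your $m=\min(H'\cup\{N\})$ device. The only cosmetic differences are that the paper proves case $(2)$ directly (with $(1)$ as a special instance) and puts only the terms of degree $<m$ into the ideal part, absorbing all higher terms into the tail $bx$, whereas you place all $H$-indexed terms in $j_0$; both regroupings of the same identity are valid.
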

\begin{proof}{Proof}
We show $(2)$. We have to show that for any $x\in S$ the \proc
$((0,x),(x,1))$ collapses above $R$.
The finite list in $R$ is the one given by the coefficients of
the polynomial of which $x$ is a zero. Suppose that
$x^k= \sum_{i\neq k,i\leq r}a_ix^i$.
Let  $\,G,\; G'\,$ be two complementary subsets of
$\left\{a_i;i\neq k\right\}$.
The collapsus of $((G,x),(x,G'))$ is of the form
$\,x^m(g'+bx)= g\,$ with $g\in \gen{G}_S,\; g'\in \cM(G'),\; b\in S$.
Actually we take $g\in G[x]$ and $b\in R[x]$.
If $G'$ is empty we take $m= k,\; g'= 1$.
Otherwise let $h$ be the smallest index $\ell$ such that $a_\ell\in G'$.
All $a_j$ with $j<h$ are in $G$. If $h<k$ we take $m= h,\; g'= a_h$.
If $h>k$ we take $m= k,\; g'= 1$. \\
NB: notice that the disjunction has only $r$ cases and not $2^r$:
\begin{itemize}
\item $ a_0\in G'$, or
\item $  a_0\in G,a_1\in G'$, or
\item $  a_0,a_1\in G,a_2\in G'$, or
\item $~~~~~~~\vdots$
\end{itemize}
\end{proof}

\subsubsection*{Relative Krull dimension with polynomial rings}
We give a constructive version of the classical theorem on the
relative Krull dimension of an extension
$A[x_1,\ldots, x_n]/A$.

We shall need the following elementary lemma from linear algebra.

\begin{lemma}
\label{lemALE} Let $V_1,\ldots,V_{n+1}$ be vectors in $R^{n}$.
\begin{itemize}
\item  If $R$ is a discrete field, there exists an index
$k\in \left\{1,\ldots,n+1\right\}$ such that
$V_k$ is a \coli of the following vectors (if $k= n+1$ this means
$V_{n+1}= 0$).
\item If $R$ is a commutative ring, write $V$ the matrix the columns of
which are the  $V_i$. Let $\mu_1,\ldots,\mu_\ell$
(with $\ell= 2^n-1$) be the list of all minors of $V$ extracted on the
$n$ or $n-1$ or $\ldots$ or $1$  last columns, and ranked by decreasing 
size.
Take $\mu_{\ell+1}= 1$
(the corresponding minor for the empty extracted matrix).
For each $k\in\left\{1,\ldots,\ell+1\right\}$ we take
$I_k= \gen{(\mu_i)_{i< k}}$ and $S_k= \cS(I_k;\mu_k)$.
If the minor $\mu_k$ is of order $j$, the vector $V_{n+1-j}$
is, in the ring $(R/I_k)_{S_k}$, equal to a  \coli of the following 
vectors.
\end{itemize}
\end{lemma}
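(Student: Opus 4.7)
The two parts are really two sides of the same Cramer-style argument, with Part 1 a degenerate case of Part 2.

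For Part 1, working over a discrete field, my plan is to process the $V_i$'s from right to left while maintaining a reduced basis of the span. Put $W_{n+1}=\mathrm{span}(V_{n+1})$, and iteratively let $W_i=\mathrm{span}(V_i,V_{i+1},\dots,V_{n+1})$. Each time we pass from $W_{i+1}$ to $W_i$, either the dimension stays the same (meaning $V_i$ is already a linear combination of $V_{i+1},\dots,V_{n+1}$, in which case we take $k=i$ and stop), or it grows by exactly one. Since $\dim W_i\le n$ and we have $n+1$ indices, the dimension cannot strictly increase at every step, so such a $k$ must exist. The case $V_{n+1}=0$ corresponds to $k=n+1$, treated separately. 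Discreteness of the field is what lets us decide at each step whether $V_i\in W_{i+1}$.

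For Part 2, fix $k$ and suppose $\mu_k$ has order $j$; it is then the determinant of the submatrix of $V$ on some rows $r_1<\dots<r_j$ and the last $j$ columns (columns $n+2-j,\dots,n+1$). In $(R/I_k)_{S_k}$, $\mu_k$ is invertible and every minor $\mu_i$ with $i<k$ is zero; crucially, since we ranked by decreasing size, \emph{every} minor of order $>j$ belongs to $\{\mu_1,\dots,\mu_{k-1}\}$, hence vanishes in $(R/I_k)_{S_k}$. Using that $\mu_k$ is invertible, Cramer's rule yields unique scalars $\lambda_1,\dots,\lambda_j\in(R/I_k)_{S_k}$ solving the square system
\[
V_{n+1-j}[r_i]=\sum_{t=1}^{j}\lambda_t\,V_{n+1-j+t}[r_i],\qquad i=1,\dots,j.
\]
The claim to prove is that the same relation holds in every other row $s$. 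This is equivalent to the vanishing of the $(j{+}1)\times(j{+}1)$ determinant whose rows are indexed by $r_1,\dots,r_j,s$ and whose columns are $n+1-j,\dots,n+1$: indeed expanding such a determinant along its first column gives $\pm\mu_k\cdot V_{n+1-j}[s]$ plus a $(R/I_k)_{S_k}$-combination of the $V_{n+1-j}[r_i]$'s with coefficients that are exactly $\mu_k\lambda_t$ after the Cramer substitution. But this $(j{+}1)\times(j{+}1)$ determinant is a minor of $V$ of order $j+1$ extracted from the last $j+1$ columns, hence of order strictly greater than $j$, hence one of the $\mu_i$ with $i<k$, hence zero modulo $I_k$.

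The edge case $k=\ell+1$ (with $\mu_{\ell+1}=1$ corresponding to the empty minor, $j=0$) expresses $V_{n+1}$ as the empty linear combination, i.e.\ $V_{n+1}=0$; here $I_{\ell+1}$ contains \emph{all} nontrivial minors of $V$, including each component of $V_{n+1}$ (these are the $1\times 1$ minors from the last column), so the assertion is immediate. The main obstacle is really bookkeeping: checking that the $(j{+}1)\times(j{+}1)$ minors appearing in the expansion are genuinely indexed before $\mu_k$, and that the Cramer coefficients produced live in $(R/I_k)_{S_k}$ rather than only in some further localization. Both points reduce to the observation that the ordering is by strictly decreasing size and that $\mu_k\in S_k$ is inverted by definition.
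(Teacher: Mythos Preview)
Your proposal is correct and follows exactly the approach the paper intends: the paper's own proof is the single sentence ``For the second point, one uses Cramer formulas,'' and your argument spells this out, using the invertibility of $\mu_k$ to solve the restricted system and the vanishing of the order-$(j{+}1)$ minors on the last $j{+}1$ columns (all of which precede $\mu_k$ by the size ordering) to propagate the relation to the remaining rows. Your handling of the boundary cases $j=0$ and $j=n$ is also right; in the latter there are no ``other rows'' and Cramer settles everything directly.
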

\begin{proof}{Proof}
For the second point, one uses Cramer formulas.
\end{proof}

\begin{proposition}
\label{propKrRelPol} Let $S= R[X_1,\ldots, X_n]$ be a poynomial ring.
The relative Krull dimension of the extension $S/R$ is equal to $n$.
Hence if the Krull dimension of $R$ if $\le r$ the one of $S$ is  $\le 
r+n+rn$.
Furthermore if the Krull dimension of $S$ is $\le r+n$ the one of  $R$ 
is
$\le r$.
\end{proposition}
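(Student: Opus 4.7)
My plan is to decompose the proposition into three assertions: the equality $\dim(S/R)=n$, the bound $\dim(S)\le r+n+rn$ when $\dim(R)\le r$, and the converse $\dim(S)\le r+n\Rightarrow\dim(R)\le r$. The middle statement is an immediate application of Theorem~\ref{thDim1}(1) with $m=r$, since $(r+1)(n+1)-1=r+n+rn$. The heart of the proof is the equality, which splits into a lower and an upper bound on $\dim(S/R)$.

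For the lower bound $\dim(S/R)\ge n$, the witness is $(X_1,\ldots,X_n)$. Any putative collapse of the associated elementary chain, after a split $(H,H')$ on a given list $a_1,\ldots,a_k\in R$, is a polynomial identity in $R[X_1,\ldots,X_n]$ in which the monomial $X_1^{m_1}\cdots X_n^{m_n}$ has coefficient $1$ on the left; matching this coefficient on the right forces $u_n+\sum_{h\in H}\rho_h a_h=0$ in $R$ for some $u_n\in\cM(\{1,a_h:h\in H'\})$ and $\rho_h\in R$. Given any list, I would produce a split for which no such equation is satisfiable: the extreme splits $H=\{1,\ldots,k\}$ and $H'=\{1,\ldots,k\}$ yield the constraints $1\in\gen{a_h}_R$ and $0\in\cM(\{1,a_h\})$ respectively, and at least one split (extreme or intermediate) fails these constraints for a nontrivial $R$; intermediate cases are handled by iterating the relative simultaneous collapse Theorem~\ref{thColSimRel}(2) starting from the pseudo-regularity of $(X_1,\ldots,X_n)$ in $S$.

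For the upper bound $\dim(S/R)\le n$, given $f_1,\ldots,f_{n+1}\in S$, I would take for $a_1,\ldots,a_k$ the list of all $R$-coefficients of the $f_i$. For each split $(H,H')$ the images $\bar f_i$ in $(R/\gen{a_h:h\in H})[X_1,\ldots,X_n]$ are supported on monomials whose coefficients lie in the monoid $\cM(\{a_h:h\in H'\})$. The exponent vectors of the $n+1$ leading monomials live in $\N^n$ and are linearly dependent over $\Q$; rescaling to $\Z$ yields an integer monomial identity which, by successive elimination of leading terms in the spirit of Proposition~\ref{propKrDimetDegTr}, promotes to an algebraic dependence $P(\bar f_1,\ldots,\bar f_{n+1})=0$ with initial coefficient in the monoid. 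The argument of Theorem~\ref{thKDP} then converts this dependence into the required collapse for the split. Lemma~\ref{lemALE} organises the enumeration of splits via minors of the coefficient matrix so that a coherent choice of leading monomials exists uniformly across the $2^k$ cases.

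For the converse $\dim(S)\le r+n\Rightarrow\dim(R)\le r$, given any elementary chain $((0,y_0),\ldots,(y_r,1))$ in $R$ of length $r+1$, the extended sequence $(y_0,\ldots,y_r,X_1,\ldots,X_n)$ in $S$ yields an elementary chain of length $r+n+1$ which collapses by hypothesis. Extracting the coefficient of $X_1^{k_1}\cdots X_n^{k_n}$ (with $k_j$ the $X_j$-exponent in the collapse) from the resulting polynomial identity in $R[X_1,\ldots,X_n]$: each $X$-correction term carries a strictly higher power of some $X_j$ and contributes zero (no element of $R[X]$ has negative $X$-powers); the pure product contributes $y_0^{n_0}\cdots y_r^{n_r}$, and each $y$-correction term $b_i y_0^{n_0}\cdots y_i^{n_i+1}$ contributes with $\bar b_i=[X_1^{k_1}\cdots X_n^{k_n}]b_i\in R$. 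The resulting identity in $R$ is precisely a collapse of the $y$-chain, giving $\dim(R)\le r$. The principal obstacle remains the upper bound $\dim(S/R)\le n$: the classical fiber-wise algebraic-dependence argument must be made uniform across all $2^k$ splits, which is where the combinatorial structure of Lemma~\ref{lemALE} and careful leading-monomial bookkeeping come in.
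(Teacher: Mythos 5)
Your decomposition, the use of Theorem~\ref{thDim1}~(1) to obtain $r+n+rn$, and the coefficient-extraction argument for the last assertion all coincide with the paper's proof (the latter is exactly how the paper reads off the collapse of $\ov{(a_1,\ldots,a_r)}$ from that of $\ov{(a_1,\ldots,a_r,X_1,\ldots,X_n)}$). The genuine gap is in the heart of the matter, the bound $\dim(S/R)\le n$, and it is twofold. First, the witness list cannot be taken to be the $R$-coefficients of $f_1,\ldots,f_{n+1}$. Counterexample: $R=\Z$, $n=1$, $f_1=X+1$, $f_2=X+3$, coefficient list $\{1,3\}$; for the split putting $3$ on the monoid side, the chain $((0,X+1),(X+1,X+3),(X+3,\{1,3\}))$ does not collapse, since a collapse would be an identity $(X+1)^{m_1}\bigl((X+3)^{m_2}(3^t+b\,(X+3))+c\,(X+1)\bigr)=0$ in $\Z[X]$, and evaluating the inner factor at $X=-1$ gives $2^{m_2}(3^t+2\,b(-1))=0$, impossible since $3^t$ is odd (equivalently, the prime chain $(0)\subset(X+1)\subset(2,X+1)$ refines this chain while avoiding $3$). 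The element that must enter the list here is $2$, a determinant-type element which is not among the coefficients; in the paper the witnesses are precisely the minors $\mu_i$ of Lemma~\ref{lemALE} applied to the matrix whose columns are the coefficient vectors of all products $f_1^{m_1}\cdots f_{n+1}^{m_{n+1}}$ of total degree $\le m$, not data read off the $f_i$ individually.

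Second, your route to the algebraic dependence is unsound: $\Q$-linear dependence of the $n+1$ exponent vectors of the leading monomials only yields a multiplicative relation among those monomials, and ``successive elimination of leading terms'' is not a terminating procedure producing $P(\bar f_1,\ldots,\bar f_{n+1})=0$ with its initial coefficient under control. The constructive argument is the dimension count of Proposition~\ref{propKrDimetDegTr} and Theorem~\ref{thKDP} made uniform in $R$: for an explicit $m$ there are more products $f_1^{m_1}\cdots f_{n+1}^{m_{n+1}}$ with $\sum_i m_i\le m$ than the dimension ${dm+n\choose n}$ of the space of polynomials of degree $\le dm$, and Lemma~\ref{lemALE} (Cramer) exhibits, in each ring $(R/I_k)_{S_k}$, one lexicographically ordered product as a linear combination of the later ones; as in Proposition~\ref{propKrDimetDegTr} this collapses the chain augmented by $(\mu_i)_{i<k}$ on the ideal side and $\mu_k$ on the monoid side, and every split $(H,H')$ of the $\mu_i$ refines one of these chains (take the least $k$ with $\mu_k\in H'$, and $\mu_{\ell+1}=1$ when $H'=\emptyset$), so only linearly many cases, not $2^k$, need be checked. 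Your lower-bound sketch, which the paper does not even spell out, becomes correct once phrased via Fact~\ref{corColSimKrA}: if every split collapsed, then $\ov{(X_1,\ldots,X_n)}$ would collapse absolutely in $S$, and reading off the coefficient of $X_1^{m_1}\cdots X_n^{m_n}$ forces $1=0$ in $R$.
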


\begin{proof}{Proof}
The last assertion follows from the fact that if the sequence
 $(a_1,\ldots ,a_r,X_1,\ldots,X_n)$ is pseudo singular in $S$
then the sequence $(a_1,\ldots ,a_r)$ is pseudo singular in $R$:
we have indeed, considering only the case $m= n= 2$, an equality
in $S$ of the form:
$$ a_1^{m_1}a_2^{m_2}X_1^{p_1}X_2^{p_2}+
a_1^{m_1}a_2^{m_2}X_1^{p_1}X_2^{p_2+1}R_4+
a_1^{m_1}a_2^{m_2}X_1^{p_1+1}R_3+
a_1^{m_1}a_2^{m_2+1}R_2+
a_1^{m_1+1}R_1 =  0
$$
Looking the coefficient  of $X_1^{p_1}X_2^{p_2}$ in the polynomial
of the left hand-side we get
$$ a_1^{m_1}a_2^{m_2}+
a_1^{m_1}a_2^{m_2+1}r_2+
a_1^{m_1+1}r_1 =  0
$$
which gives the collapsus of $\ov{(a_1,a_2)}$ in $R$.\\
The second assertion follows from the first (cf. theorem
\ref{thKDP}(1)).\\
The proof of the first assertion in classical mathematics
relies directly on considering the case of fields.
We give a constructive proof which follows the same pattern,
and consider the case of (discrete) fields.
We analyse the proof of proposition \ref{propKrDimetDegTr}
and we substitute everywhere the field $K$ by a ring $R$, which
will allow us to use the definition of collapsus above $R$.
Take $(y_1,\ldots,y_{n+1})$ in $R[X_1,\ldots, X_n]$.
We can write as a proof in linear algebra a proof that
the $y_i$ are algebraically dependent over $R$, assuming first
that $R$ is a discrete field.
For instance if the $y_i$ are polynomials of degre $\le d$ the
polynomials
$y_1^{m_1}\cdots y_{n+1}^{m_{n+1}}$ with  $\sum_i m_i\le m$ are
in the vector space of polynomials of degre $\le dm$ which is
of dimension
$\le {dm+n\choose n}$, and there are  ${m+n+1 \choose n+1}$ of them.
For an explicit value of $m$ we have
${m+n+1 \choose n+1}>{dm+n\choose n}$ (since one term is a polynomial
of degre $n+1$ and the other a polynomial of degre $n$).
We fix $m$ to this value.
We order the corresponding ``vectors''
 $y_1^{m_1}\cdots y_{n+1}^{m_{n+1}}$
(such that  $\sum_i m_i\le m$) along the lexicographic order for
$(m_1,\ldots,m_{n+1})$. We can limit ourselves to consider ${dm+n\choose 
n}+1$
vectors.
Using lemma \ref{lemALE}, we get in each rings $(R/I_k)_{S_k}$
a vector
$y_1^{m_1}\cdots y_{n+1}^{m_{n+1}}$ which is a linear combination of
the following vectors. This gives, like in the proof of
proposition \ref{propKrDimetDegTr} a collapsus, but this time
we have to add at the beginning and at the end of the \proel
$\overline{(y_1,\ldots,y_{n+1})}$ the ``additional hypothesis'': the
\proc
$$
((\mu_i)_{i<k},y_1;y_2),(y_2;y_3)\ldots,
      (y_{n-1},(y_{n}),(y_{n};y_{n+1},\mu_k)
$$
collapses (for each $k$). Indeed, for showing the collapsus of an
\proc one can always quotient by the first of the ideals (or
by a smaller ideal) and localised along the last of the \mos (or along
a smaller \mo).\\
Lastly, all these collapsus give us the collapsus of
$\overline{(y_1,\ldots,y_{n+1})}$ above $R$
using the finite set of the  $\mu_i$.
\end{proof}

\subsection{Going Up}
\label{subsecGU}
If $R$ is a subring of $S$, an \proc of $R$ which collapses in $R$
collapses in $S$ and the trace on $R$ of a saturated \proi of $S$
is a saturated \proi of $R$. Among other things, we shall establish
in this section that for integral extensions we have also the converse
of these assertions.

\begin{lemma}
\label{lemGU1} Let $R\subseteq S$ be two commutative rings where
$S$ is integral over $R$. Let $I$ be an ideal of $R$ and take $x\in R.$
Then
$$ x\in\sqrt{I}\quad \Longleftrightarrow \quad x\in\sqrt{IB}
$$
\end{lemma}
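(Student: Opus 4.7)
The forward implication is immediate: if $x^n \in I$ then $x^n \in IS$ too, so $x \in \sqrt{IS}$.

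For the reverse implication, the plan is the classical determinantal trick, which is constructively valid. Suppose $x^n \in IS$, so we can write
$$x^n = \sum_{i=1}^k a_i s_i$$
with $a_i \in I$ and $s_i \in S$. Since each $s_i$ is integral over $R$, the subring $M = R[s_1,\ldots,s_k]$ is a finitely generated $R$-module; pick generators $v_1,\ldots,v_m$ with $v_1 = 1$ (e.g. suitable monomials in the $s_i$ of bounded degree).

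Next I would observe that multiplication by $x^n$ carries $M$ into $IM$: each product $s_i v_j$ lies in $M$, so can be written $s_i v_j = \sum_\ell r_{ij\ell} v_\ell$ with $r_{ij\ell} \in R$, and therefore
$$x^n v_j = \sum_i a_i s_i v_j = \sum_\ell \Bigl(\sum_i a_i r_{ij\ell}\Bigr) v_\ell,$$
with all the bracketed coefficients lying in $I$. Letting $A = (A_{\ell j})$ be the $m \times m$ matrix over $I$ defined by $A_{\ell j} = \sum_i a_i r_{ij\ell}$, we have $(x^n \Id_m - A)\, v = 0$ with $v = (v_1,\ldots,v_m)^t$.

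Multiplying by the adjugate matrix gives $\det(x^n \Id_m - A)\,v_j = 0$ for every $j$; taking $j=1$ and using $v_1 = 1$ yields $\det(x^n \Id_m - A) = 0$ in $S$ (and in $R$, since the determinant has all entries in $R$). Expanding this determinant along the characteristic polynomial pattern gives an identity
$$x^{nm} + c_1 x^{n(m-1)} + c_2 x^{n(m-2)} + \cdots + c_m = 0$$
with each $c_i$ a polynomial in the entries of $A$, so $c_i \in I^i \subseteq I$. Hence $x^{nm} \in I$, proving $x \in \sqrt{I}$.

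There is no real obstacle; the only point requiring care is checking that the generators $v_1,\ldots,v_m$ of $R[s_1,\ldots,s_k]$ can be chosen explicitly from integral dependence relations for the individual $s_i$ (take products $s_1^{e_1}\cdots s_k^{e_k}$ with each $e_i$ below the degree of the integral relation for $s_i$), which keeps the argument constructive and in line with the spirit of the paper.
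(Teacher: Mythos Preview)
Your proof is correct and follows essentially the same route as the paper's: both arguments pass to the finitely generated faithful $R$-submodule $M=R[s_1,\ldots,s_k]$ of $S$, observe that multiplication by $x^n$ is given on a generating set by a matrix with entries in $I$, and read off from the characteristic polynomial (via the adjugate/Cayley--Hamilton trick) that a power of $x$ lies in $I$. Your write-up is in fact a bit more explicit than the paper's about the choice of generators and the use of $v_1=1$ to conclude the determinant vanishes, but there is no substantive difference in method.
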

\begin{proof}{Proof}
Suppose  $ x\in\sqrt{IB}$, \cad $ x^n= \sum j_ib_i$, with
$j_i\in I,\;b_i\in B$. The $b_i$ and 1 generate a sub $R$-module 
faithful
and \tf $M$ of $S$ and
$x^n$ can be written as a linear combinaison  with coefficients in $I$
over a system of generators of $M$. The  \polcar of the matrix of
the multiplication by $x^n$ (expressed using this system of generators)
have then all its coefficients (except maybe the leading one) in $I$.
\end{proof}
\begin{definition}
\label{defTraceproc} Let $R\subseteq S$ two commutative rings.
Take $\cP= (J,V)$ an \proi of $S$ and $\cC= (\cP_1,\ldots \cP_n)$ an 
\proc de
$S$.
We say that {\em $(J\cap R,V\cap R)$ is the trace of $\cP$ on $R$}.
We write $\cP|_R$ this \proi of $R$.  We say that
{\em $(\cP_1|_R,\ldots \cP_n|_R)$ is the trace of $\cC$ one $R$}.
We write $\cC|_R$ this \proc of  $R$.
\end{definition}
It is clear that the trace of a complete (resp. saturated) \proc
is complete (resp. saturated).

\begin{corollary}
\label{corLyO}
{\rm (Lying over)} Let $R\subseteq S$ be commutative rings with
$S$ integral over $R$.
\begin{itemize}
\item  Let $\cP$ be an \proi in $R$.
\begin{itemize}
\item  [$(1)$] If $\cP$ collapses in  $S$, it collapses in $R$.
\item  [$(2)$] If $\cQ$ is the saturation of $\cP$ in  $S$, then
$\cQ|_R$ is the saturation of $\cP$ in $R$.
\end{itemize}
%
\item \Tcgi Any prime ideal of $R$ is the trace on $A$ of a prime ideal
of $S$.
\end{itemize}
\end{corollary}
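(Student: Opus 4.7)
The plan is to reduce everything to the classical Lying-Over computation, which is encapsulated in Lemma~\ref{lemGU1}: for any ideal $I$ of $R$ and any $x\in R$, $x\in \sqrt{I}$ iff $x\in\sqrt{IS}$. For point (1), let $\cP = (J,U)$ be an \proi of $R$ that collapses in $S$. Unfolding the definition, this produces $u\in \cM(U)$ and $j\in\gen{J}_S$ with $u=j$, so that $u\in\gen{J}_S\cap R$. Since $\gen{J}_S = \gen{J}_R\cdot S$, we have $u\in \sqrt{\gen{J}_R\cdot S}\cap R$, and Lemma~\ref{lemGU1} gives $u^n\in \gen{J}_R$ for some $n$. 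But $u^n\in\cM(U)$, so $\cM(U)\cap \gen{J}_R\neq\emptyset$, which is exactly the collapse of $\cP$ in $R$.

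For point (2), let $\cQ=(I^*,V^*)$ be the saturation of $\cP$ in $S$; its trace $\cQ|_R=(I^*\cap R,V^*\cap R)$ is clearly a saturated \proi of $R$ refining $\cP$. To see it \emph{is} the saturation of $\cP$ in $R$, I invoke the characterization from Theorem~\ref{ThColSim1}~(2): an element $x\in R$ lies in the saturation of $\cP$ in $S$ on the ideal side iff $(J,x;U)$ collapses in $S$; by part (1) applied to this \proi of $R$, it then collapses in $R$, hence $x$ lies in the saturation of $\cP$ in $R$. Symmetrically for the filter side. This yields the two inclusions needed to identify $\cQ|_R$ with the saturation of $\cP$ in $R$.

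For the final assertion, let $P$ be a (detachable) prime ideal of $R$ and consider the saturated \proi $(P,R\setminus P)$ in $R$, which is nontrivial because $1\notin P$. Viewing it in $S$, part (1) (contrapositive) shows it does not collapse in $S$ either. Then Corollary~\ref{corNstformHilbert} (Krull's theorem / formal Hilbert \nstz, which is where the compactness theorem enters) produces a detachable prime ideal $Q$ of $S$ with $P\subseteq Q$ and $(R\setminus P)\cap Q=\emptyset$, i.e.\ $Q\cap R=P$.

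The only delicate step is the use of Lemma~\ref{lemGU1} in point (1): one must notice that the element of $\cM(U)$ witnessing the collapse in $S$ already sits in $R$, so that integrality lets us trade it for a power lying in the \emph{$R$-ideal} $\gen{J}_R$; once this is observed, parts (2) and the lying-over statement are formal consequences of (1) and the material proved earlier in the excerpt.
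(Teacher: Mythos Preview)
Your proof is correct and follows exactly the paper's approach: prove (1) via Lemma~\ref{lemGU1}, then derive (2) from (1) using the collapse characterisation of saturation (Theorem~\ref{ThColSim1}(2)), and obtain the classical Lying Over from (1) plus Corollary~\ref{corNstformHilbert}. One minor slip: in your argument for (2), ``$x$ lies on the ideal side'' corresponds to the collapse of $(J;x,U)=(J,U\cup\{x\})$, not of $(J,x;U)=(J\cup\{x\},U)$ (the latter puts $x$ on the monoid side); the reasoning is unaffected since the symmetric argument applies.
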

\begin{proof}{Proof}
We show (1), the other points are then easy consequences.
If $\cP= (I,U)$ collapses in  $S$ an element of $\cM(U)$ is in the 
radical
of  $\gen{I}_S= \gen{I}_RS$ and hence, by the previous lemma, is in
the radical of $\gen{I}_R$.
\end{proof}

\begin{theorem}
\label{thGU} {\em (Going Up)}
 Let $R\subseteq S$ be commutative rings with
$S$ integral over $R$.
Let $\cC_1$ be a saturated  \proc of $S$ and $\cC_2$
an \proc of $R$.
\begin{itemize}
\item [$(1)$] The \proc
$\cC= \cC_1\bullet \cC_2$
collapses in $S$ \ssi the \proc $\cC_1|_R\bullet \cC_2$ collapses in 
$R$.
\item [$(2)$] Let $\cC'$ be the saturation of $\cC$ in $S$.
The trace of $\cC'$ on $R$ is the saturation of
$\cC_1|_R\bullet \cC_2$ in $R$.
\end{itemize}
In particular any \proc of $R$ that collapses in $S$ collapses
in $R$, and the trace on $R$ of the saturation in $S$ of
an \proc of $R$ is equal to its saturation in $R$.
\end{theorem}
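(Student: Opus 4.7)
The plan is to reduce both parts to Corollary \ref{corLyO} (Lying Over, for a single \proiz) by induction on the length of $\cC_2$, using Fact \ref{factSatur}(1) to strip off the saturated block $\cC_1$ and pass to a quotient in which the extension remains integral. One direction of (1) is immediate: if $\cC_1|_R\bullet\cC_2$ collapses in $R$, the witnessing algebraic identity already lives in $R\subseteq S$, and since $\cC=\cC_1\bullet\cC_2$ refines $\cC_1|_R\bullet\cC_2$ as \procs in $S$ (ideals are only enlarged and monoids only enriched when passing from $\cC_1|_R$ to $\cC_1$), Fact \ref{factColAc} yields collapse of $\cC$ in $S$.

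For the nontrivial direction of (1), I first observe that the trace on $R$ of a saturated \proi of $S$ is saturated in $R$ (routine check of the four conjugation conditions in Definition \ref{defConjug}), so $\cC_1|_R$ is saturated in $R$. Writing $(I_\ell,U_\ell)$ for the last \proi of $\cC_1$, Fact \ref{factSatur}(1) applied on both sides shows that $\cC$ collapses in $S$ iff $\cC_2$ collapses in $S/I_\ell$, and that $\cC_1|_R\bullet\cC_2$ collapses in $R$ iff $\cC_2$ collapses in $R/(I_\ell\cap R)$. Since $R/(I_\ell\cap R)\hookrightarrow S/I_\ell$ remains an integral extension, the problem is reduced to the ``pure'' statement: for $R\subseteq S$ integral and $\cD$ an \proc of $R$, $\cD$ collapses in $S$ iff $\cD$ collapses in $R$. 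I would prove this by a secondary induction on the length of $\cD$. The base case (length $0$) is exactly Corollary \ref{corLyO}(1). For the step, write $\cD=\cP\bullet\cD'$ and let $\hat\cP$ (resp. $\check\cP$) be the saturation of $\cP$ in $S$ (resp. in $R$); Corollary \ref{corLyO}(2) gives $\hat\cP|_R=\check\cP$, so writing $\hat I$ for the ideal of $\hat\cP$, one has $\hat I\cap R$ equal to the ideal of $\check\cP$. Applying Fact \ref{factSatur}(1) once more to $\hat\cP\bullet\cD'$ and $\check\cP\bullet\cD'$ reduces the claim to the induction hypothesis for $\cD'$ in the integral extension $R/(\hat I\cap R)\hookrightarrow S/\hat I$.

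For part (2), I would use the characterization from Theorem \ref{ThColSimKrA}(2) of the saturation as the set of elements whose adjunction forces collapse. Namely, an element $x\in R$ lies in the ideal at position $i$ of $\cC'|_R$ iff $\cC\,\&\,\{x\in\cC^{(i)}\}$ collapses in $S$. Applying part (1) to this modified chain---after, when $i\leq\ell$, re-saturating the $\cC_1$-block in $S$, whose trace on $R$ is controlled by the saturation variant of the pure lying-over statement, proved together with the collapse statement by the same induction---this is equivalent to $(\cC_1|_R\bullet\cC_2)\,\&\,\{x\in\text{position }i\}$ collapsing in $R$, which says exactly that $x$ belongs to the ideal at position $i$ of the saturation of $\cC_1|_R\bullet\cC_2$ in $R$. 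The statement for monoid membership is dual. The main obstacle is the bookkeeping in the inductive step when the modification sits inside $\cC_1$: one must verify that re-saturating in $S$ and then tracing commutes with first tracing and then re-saturating. This is precisely what Corollary \ref{corLyO}(2) handles at each level of the induction on length, so no new algebraic input is required beyond Fact \ref{factSatur}(1) and Lying Over.
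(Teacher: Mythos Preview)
Your proof is correct and follows essentially the same strategy as the paper's: reduce to the ``pure'' lying-over statement for chains (the case $\cC_1$ empty) via Fact~\ref{factSatur}(1), and prove that pure statement by induction on the length of the $R$-chain, the base case being Corollary~\ref{corLyO} and the inductive step saturating the first \proi in $S$ and passing to the quotient. The paper packages the same reductions as a joint induction on the pair $(1)_{\ell,r},(2)_{\ell,r}$, invoking $(2)_{0,r}\Rightarrow(1)_{\ell,r}$ and $(2)_{1,r}\Rightarrow(1)_{0,r+1}$; your linear organisation (prove the pure statement first, then derive the general (1) and (2)) is an equally valid unwinding of the same induction.

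Two minor remarks. First, in your treatment of (2) you wrote that $x$ lies in the \emph{ideal} at position $i$ iff $\cC\,\&\,\{x\in\cC^{(i)}\}$ collapses; by Theorem~\ref{ThColSimKrA}(2) that characterises membership in the \emph{monoid} --- the ideal side requires $\cC\,\&\,\{x\notin\cC^{(i)}\}$. This is only a notational slip. Second, the bookkeeping you flag for positions $i\leq\ell$ (re-saturating the modified $\cC_1$ in $S$ and checking that tracing to $R$ commutes with saturating the trace) is exactly what the paper absorbs into its claim that $(1)_{\ell,r}$ and $(2)_{\ell,r}$ are ``directly equivalent via the characterisation of saturation by collapsus''; your proposal to carry the saturation variant through the same induction is the right way to make this explicit, and as you note it needs no algebraic input beyond Corollary~\ref{corLyO}(2).
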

\begin{proof}{Proof}
Take
$\cC_1= ((J_1,V_1),\ldots,(J_\ell,V_\ell))$ (in $S$),
$\cC_1|_R= ((I_1,U_1),\ldots,(I_\ell,U_\ell))$ its trace on $R$ and
$\cC_2= ((I_{\ell+1},U_{\ell+1}),\ldots,(I_{\ell+r},U_{\ell+r}))$.
Write $(1)_{\ell,r}$ and $(2)_{\ell,r}$ the assertions for given $\ell$ 
and
$r$.
These two statements are in fact directly equivalent, given
the characterisation of the saturation of an \proc in term
of collapsus. Notice also that $(1)_{0,1}$ and
$(2)_{0,1}$ state the Lying over.\\
We show $(2)_{0,r}\Rightarrow (1)_{\ell,r}$. The \proc
$\cC_1|_R$ is saturated in  $R$.
Consider the quotient ring
$R'=  R/I_\ell\subseteq S'= S/J_\ell$. The ring $S'$ is still integral 
over $R'$.
$(2)_{0,r}$ applied to these quotients gives $(1)_{\ell,r}$
using fact \ref{factSatur}~(1).\\
It is enough now to show $(2)_{1,r}\Rightarrow (1)_{0,r+1}$.
Let $(\cP_1,\ldots,\cP_{r+1})$ be an \proc in $R$ which collapses in 
$S$.
Let $\cQ_1$ be the saturation of $\cP_1$ in $S$. The trace of $\cQ_1$ 
on $R$
is $\cP_1$ by the Lying over. We can then apply $(2)_{1,r}$ with
$\cC_1= \cQ_1$ and  $\cC_2= (\cP_2,\ldots,\cP_{r+1})$.
\end{proof}
\begin{corollary}
\label{corGU1}
{\rm (Going Up, classical version)} \Tcgi
 Let $R\subseteq S$ be commutative rings with
$S$ integral over $R$.
Let $Q_1\subseteq\cdots\subseteq Q_\ell$ be prime ideals in $S$,
$P_i= Q_i\cap R$  $(i= 1,\ldots,\ell)$, and
$P_{\ell+1}\subseteq\cdots\subseteq P_{\ell+r}$
prime ideals in  $R$ with $P_{\ell}\subseteq P_{\ell+1}$.
There exist $Q_{\ell+1},\ldots,Q_{\ell+r}$, prime ideals in $S$ which
satisfy $Q_\ell\subseteq Q_{\ell+1}\subseteq\cdots\subseteq Q_{\ell+r}$
and $Q_{\ell+j}\cap A= P_{\ell+j}$ for $j= 1,\ldots,r$.
\end{corollary}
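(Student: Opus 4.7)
The plan is to apply the formal Nullstellensatz (Theorem \ref{th.nstformel}) to an appropriately constructed idealistic chain in $S$, using Going Up (Theorem \ref{thGU}) to certify its non-collapse. First I would encode the given data as idealistic chains. Let $\cC_1 = ((Q_1, S\setminus Q_1),\ldots,(Q_\ell, S\setminus Q_\ell))$ be the saturated idealistic chain in $S$ associated with the given prime chain, and let $\cC_2 = ((P_{\ell+1}, R\setminus P_{\ell+1}),\ldots,(P_{\ell+r}, R\setminus P_{\ell+r}))$, viewed as an idealistic chain in $R$ (and hence in $S$). Set $\cC = \cC_1\bullet \cC_2$.

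Next I would compute the trace $\cC_1|_R$. Since $(S\setminus Q_i)\cap R = R\setminus P_i$, this trace is exactly $((P_1, R\setminus P_1),\ldots,(P_\ell, R\setminus P_\ell))$. Hence $\cC_1|_R\bullet \cC_2$ is the saturated idealistic chain in $R$ associated to the genuine increasing chain of prime ideals $P_1\subseteq\cdots\subseteq P_{\ell+r}$ (using $P_\ell\subseteq P_{\ell+1}$). In particular this chain does not collapse in $R$: each component is a proper prime ideal with its complement, so no finite product from the filter parts can land in the sum of the ideal parts. By Theorem \ref{thGU}(1), it follows that $\cC$ does not collapse in $S$.

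Now I would invoke Theorem \ref{th.nstformel} (which depends on the compactness theorem, exactly as permitted in the statement): the non-collapse of $\cC$ in $S$ yields $\ell+r$ detachable prime ideals $Q_1'\subseteq\cdots\subseteq Q_{\ell+r}'$ of $S$ with $Q_i\subseteq Q_i'$ and $(S\setminus Q_i)\cap Q_i' = \emptyset$ for $i\le\ell$, and $P_{\ell+j}\subseteq Q_{\ell+j}'$ and $(R\setminus P_{\ell+j})\cap Q_{\ell+j}' = \emptyset$ for $j= 1,\ldots,r$. The first set of conditions forces $Q_i' = Q_i$ for $i\le\ell$, so the new chain truly extends the original one. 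The second forces $Q_{\ell+j}'\cap R = P_{\ell+j}$, which is the lying-over condition we wanted. Setting $Q_{\ell+j}:= Q_{\ell+j}'$ completes the proof.

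The only real work is locating the non-collapse of $\cC$, and that is precisely what Going Up was designed to deliver; once that is in hand, the passage from a non-collapsing saturated chain to an actual chain of prime ideals is a direct application of the formal Nullstellensatz. So there is no essential obstacle, only some bookkeeping about traces of complements, which I have indicated above.
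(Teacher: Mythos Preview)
Your proof is correct and follows essentially the same route as the paper: encode the data as the saturated chain $\cC_1$ in $S$ and the chain $\cC_2$ in $R$, observe that $\cC_1|_R\bullet\cC_2$ is the frozen chain of the $P_i$'s and hence does not collapse in $R$, apply Theorem~\ref{thGU} to get non-collapse of $\cC_1\bullet\cC_2$ in $S$, and then invoke Theorem~\ref{th.nstformel} to produce the $Q'_i$, which are forced to agree with the $Q_i$ on the first $\ell$ slots and to have the prescribed traces on the last $r$. The paper phrases the last two verifications in terms of ``frozen'' chains, but your explicit argument that $Q_i\subseteq Q_i'$ together with $(S\setminus Q_i)\cap Q_i'=\emptyset$ forces $Q_i'=Q_i$, and similarly for the traces, is exactly the content of that remark.
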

\begin{proof}{Proof}
We consider the \procs $\cC_1$ (in $S$) and
$\cC_2$ (in $R$) associate to the given chains.
by hypothesis,  $\cC_1|_R\bullet \cC_2$ does not collapse in $R$
and hence, by constructive Going up,  $\cC= \cC_1\bullet \cC_2$ does
not collapse in $S$. We can then consider a chain of prime ideals
of $S$ that refines the \proc $\cC$
(theorem \ref{th.nstformel}). Since $\cC_1$ is frozen it does not
change in this process (otherwise it would collapse).
The last part of the chain
$Q_{\ell+1},\ldots,Q_{\ell+r}$,
has its trace on $R$ frozen, and hence it has to be equal to
$P_{\ell+1},\ldots,P_{\ell+r}$ (otherwise it would collapse).
\end{proof}
Notice that it seems difficult to show directly, in classical 
mathematics,
theorem \ref{thGU} from corollary \ref{corGU1},
even using theorem \ref{th.nstformel} which connects the
\procs  and the chains of prime ideals.
\begin{corollary}
\label{corGU3}
{\rm (Krull dimension of an integral extension)}
 Let $R\subseteq S$ be commutative rings with
$S$ integral over $R$.
\begin{itemize}
\item [$(1)$] The Krull dimension of $R$ is $\leq n$ \ssi the Krull
dimension of $S$ is $\leq n$.
\item [$(2)$] A pseudo regular sequence of $R$ is pseudo regular in $S$.
\item [$(3)$] If the collapse of \proels in $R$ is decidable,
from a psuedo regular sequence in $S$ one can build a pseudo regular
sequence of same length in $R$.
\end{itemize}
NB: for the  points $(2)$ and $3$ the rings are supposed to be equipped
with the apartness relation $\lnot(x= 0)$.
\end{corollary}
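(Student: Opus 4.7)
The plan is to derive all three parts from the Going Up theorem \ref{thGU}, together with the relative-dimension results \ref{propDim2} and \ref{thDim1} which are already available at this point of the text.

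For part (1), I would argue both directions separately. The direction ``$\dim(S)\le n \Rightarrow \dim(R)\le n$'' is immediate: any elementary idealistic chain $\overline{(a_1,\ldots,a_{n+1})}$ with $a_i\in R$ collapses in $S$ by assumption, hence collapses in $R$ by Theorem \ref{thGU}. The reverse direction ``$\dim(R)\le n \Rightarrow \dim(S)\le n$'' is exactly the content of Proposition \ref{propDim2}(3), which was itself obtained by combining the integrality statement \ref{propDim2}(1) (relative Krull dimension of $S/R$ equals $0$) with Theorem \ref{thDim1}(1): $(n+1)(0+1)-1=n$. So nothing new needs to be proved.

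For part (2), let $(a_1,\ldots,a_\ell)$ be a pseudo regular sequence in $R$. By definition the elementary idealistic chain $\overline{(a_1,\ldots,a_\ell)}$ does not collapse in $R$. Applying Theorem \ref{thGU} in its contrapositive form (or the concluding sentence of that theorem stating that an idealistic chain of $R$ collapses in $S$ iff it collapses in $R$), the chain cannot collapse in $S$ either. Hence the same sequence is pseudo regular in $S$, which is what (2) asserts. The apartness hypothesis $\lnot(x=0)$ enters only through the definition of pseudo regularity but requires no separate argument.

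For part (3), I would invoke Theorem \ref{thDim1}(2) with the specific parameter $n=0$. Indeed, by Proposition \ref{propDim2}(1), the relative Krull dimension of the integral extension $S/R$ is $\le 0$, so the hypothesis of \ref{thDim1}(2) is satisfied. The formula $(m+1)(n+1)=m+1$ then says: assuming the collapse of elementary idealistic chains in $R$ is decidable, from a pseudo regular sequence of length $m+1$ in $S$ one can explicitly build a pseudo regular sequence of length $m+1$ in $R$. Renaming $m+1=\ell$ gives exactly statement (3).

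The main obstacle, if any, is bookkeeping rather than mathematical: one must check that the apartness relations on $R$ and $S$ are compatible (the apartness on $R$ being induced by that of $S$), so that ``does not collapse'' means the same thing in both rings when interpreting pseudo regularity. Since the inequality is taken as $\lnot(x=0)$, this compatibility is automatic because the collapsus equations live in $R\subseteq S$. Once this is acknowledged, the three assertions are essentially one-line consequences of results already proved.
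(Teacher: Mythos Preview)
Your proof is correct and follows essentially the same approach as the paper: part (1) combines Proposition~\ref{propDim2}(3) with the last assertion of Theorem~\ref{thGU}, part (2) is the contrapositive of that same assertion, and part (3) is Theorem~\ref{thDim1}(2) applied with relative dimension $0$ coming from Proposition~\ref{propDim2}(1). Your additional remarks on apartness compatibility are a helpful clarification but not a departure from the paper's argument.
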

\begin{proof}{Proof}
(1) Proposition \ref{propDim2}~(3) gives
$\dim(R)\le n\Rightarrow \dim(S)\le n$, and the last part of theorem
\ref{thGU} gives the converse.\\
(2) follows by contrapposition from the last part of theorem 
\ref{thGU}.\\
(3) Since the relative Krull of $S$ on $R$ is $0$, we can apply theorem
 \ref{thDim1}~(2).
\end{proof}
A corollary of the previous result and of theorem \ref{thKDP} is
the following theorem, which says that the Krull dimension of
a finitely presented algebra  over a discrete field is
the one given by Noether's normalisation lemma.

\begin{theorem}
\label{thKDP2} Let $K$ be a discrete field,  $I$ a \tf ideal of 
the
ring
$K[X_1,\ldots,X_\ell]$ and $A$ the quotient algebra.
Noether's normalisation lemma applied to the ideal $I$
gives an integer $r$ and elements $y_1,~ \ldots,~ y_r$ of $A$ that
are algebraically independent over $K$ and such that $A$ is
a \tf module over $K[y_1,\ldots,y_r]$.
The Krull dimension of $A$ is equal to $r$.
\end{theorem}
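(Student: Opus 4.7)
The plan is to combine Theorem \ref{thKDP} (which computes the Krull dimension of a polynomial ring over a discrete field) with Corollary \ref{corGU3}(1) (which says that an integral extension preserves Krull dimension).

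First, I would use that the elements $y_1,\ldots,y_r\in A$ are algebraically independent over $K$ to identify the subalgebra of $A$ that they generate with a genuine polynomial ring: the evaluation \homo $K[Y_1,\ldots,Y_r]\to A,~Y_i\mapsto y_i$ has, by definition of algebraic independence, trivial kernel, so it provides an injection $K[y_1,\ldots,y_r]\hookrightarrow A$. In particular $K[y_1,\ldots,y_r]$ has Krull dimension exactly $r$ by Theorem \ref{thKDP}.

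Second, I would verify that the extension $K[y_1,\ldots,y_r]\subseteq A$ is integral. Since $A$ is a \tf module over $K[y_1,\ldots,y_r]$, any $x\in A$ acts by multiplication as an endomorphism of a faithful \tf $K[y_1,\ldots,y_r]$-module, so its \polcar (computed from the matrix of multiplication by $x$ in a finite system of generators) is monic with coefficients in $K[y_1,\ldots,y_r]$ and annihilates $x$; this is exactly the ingredient already invoked in Lemma \ref{lemGU1}.

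Third, I would conclude by applying Corollary \ref{corGU3}(1) to the integral extension $K[y_1,\ldots,y_r]\subseteq A$: we get $\dim(A)\le r$ \ssi $\dim(K[y_1,\ldots,y_r])\le r$, and similarly for any bound, so $\dim(A)=\dim(K[y_1,\ldots,y_r])=r$.

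There is no real obstacle here: the only point where one has to be careful in the constructive framework is that Corollary \ref{corGU3}(1) is formulated as an equivalence of inequalities $\le n$, not as an equality of dimensions per se, so one must read off both directions (using proposition \ref{propDim2}(3) for one direction and the last part of theorem \ref{thGU} for the other) to deduce $\dim(A)=r$ from $\dim(K[y_1,\ldots,y_r])=r$. Once that is noted, the proof is essentially one line.
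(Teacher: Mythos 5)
Your proposal is correct and is essentially the paper's own route: the paper states this theorem precisely as a corollary of Corollary \ref{corGU3} and Theorem \ref{thKDP}, via the integrality of $A$ over the polynomial ring $K[y_1,\ldots,y_r]$, which is what you spell out. The only point worth making explicit is that, constructively, the lower bound $\dim(A)\geq r$ is best obtained from Corollary \ref{corGU3}(2) (a pseudo regular sequence of the base ring, here $(y_1,\ldots,y_r)$, remains pseudo regular in the integral extension) rather than from the equivalence of the bounds $\leq n$ alone, which by contraposition only yields $\dim(A)>r-1$.
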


\subsection{Going Down}
\label{subsecGD}
This section is copied over the section on Going Down given
in Sharp's book. Since one cannot in general compute the
minimal polynomial of an element algebraic over a field,
the first lemma is a little less simple than the corresponding
lemma in classical mathematics and the proof of the Going Down
actually explicits an algorithm that searches a polynomial ``which
may be seen as the minimal polynomial for the ongoing computation''.

\begin{lemma}
\label{lemGD0}
Let $R\subseteq S$ be two entire rings with $S$ integral over $R$
and $R$ integrally closed. Let $I$ be a radical ideal of $R$
and $x \in IS$. There exists a monic polynomial $M(X)$
whose all non leading coefficients are in $I$ and such
that $M(x) =  0$. Let $P$ be another polynomial in $R[X]$
such that $P(x) =  0$. Let $K$ be the quotient field of $R$
and $Q$ the monic gcd of $P$ and $M$ in $K[X]$. Then $Q$
has its non leading coefficients in $I$, is such that
$Q(x) =  0$ and divides $M$ and $P$ in $R[X].$
\end{lemma}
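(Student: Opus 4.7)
The plan is to prove the statement in three successive pieces: existence of $M$; that $Q$ lies in $R[X]$ and divides $M$ and $P$ in $R[X]$; and that the non-leading coefficients of $Q$ lie in $I$.

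For the existence of $M$, I would argue exactly as in the proof of Lemma \ref{lemGU1}. Writing $x = \sum a_i s_i$ with $a_i \in I$ and $s_i \in S$, the $R$-submodule $N$ of $S$ generated by $1$ and the $s_i$ together with the finitely many generators needed to witness their integrality over $R$ is faithful, finitely generated, and satisfies $xN \subseteq IN$. The determinant trick (Cayley--Hamilton for the matrix of multiplication by $x$ on $N$) then produces a monic polynomial $M \in R[X]$ whose non-leading coefficients lie in $I$ and which annihilates $x$.

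For the second piece I would first check that $Q \in R[X]$. Work in a splitting field $L$ of $M$ over $K$, so that $M = \prod_{i=1}^n (X - \alpha_i)$; since $Q \mid M$ in $K[X]$ and $Q$ is monic, $Q = \prod_{j \in J}(X - \alpha_j)$ for some subset $J$. Each $\alpha_j$ is integral over $R$ (it is a root of the monic polynomial $M$), so the coefficients of $Q$, being elementary symmetric functions of the $\alpha_j$, are integral over $R$; as they lie in $K$ and $R$ is integrally closed, they lie in $R$. The same reasoning applied to the complementary factor shows $M/Q \in R[X]$, so $Q \mid M$ in $R[X]$. For $Q \mid P$ in $R[X]$, use Euclidean division of $P$ by the monic $Q$ in $R[X]$, giving $P = QP_1 + P_2$ with $\deg P_2 < \deg Q$; the same equation in $K[X]$ combined with $Q \mid P$ in $K[X]$ forces $P_2 = 0$ by uniqueness of division. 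Finally, $Q(x) = 0$ is immediate from Bezout in $K[X]$: writing $Q = UM + VP$ with $U,V \in K[X]$, clearing denominators, and evaluating at $x$ using $M(x) = P(x) = 0$ gives $cQ(x) = 0$ for some nonzero $c \in R$, hence $Q(x) = 0$ since $S$ is entire.

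The main obstacle, and the step where the hypotheses on $I$ and on $R$ really come into play, is showing that the non-leading coefficients of $Q$ belong to $I$. Here each root $\alpha_j$ of $M$ satisfies $\alpha_j^n = -\sum_{k<n} c_k \alpha_j^k$ with all $c_k \in I$, so $\alpha_j \in \sqrt{I R'}$ where $R' = R[\alpha_1,\ldots,\alpha_n]$. Every non-leading coefficient of $Q$ is, up to sign, an elementary symmetric function of the $\alpha_j$ with $j \in J$ of positive degree, hence a sum of products each involving at least one $\alpha_j$, and therefore also lies in $\sqrt{IR'}$. Combined with the fact that these coefficients lie in $R$, we conclude they lie in $R \cap \sqrt{IR'}$. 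Since $R'$ is integral over $R$, Lemma \ref{lemGU1} identifies this intersection with $\sqrt{I}$, which equals $I$ because $I$ is radical. This finishes the proof; the delicate point is really the combination of the root-lies-in-$\sqrt{IR'}$ argument with the invocation of Lemma \ref{lemGU1}, which is what lets us transfer information about $IR'$ back down to $I$.
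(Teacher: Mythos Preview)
Your argument is correct and follows the same line as the paper's: Cayley--Hamilton for the existence of $M$, Bezout for $Q(x)=0$, Euclidean division for $Q\mid P$ in $R[X]$, and the factorisation $M=Q\cdot(M/Q)$ over $K$ together with integral closedness to get the coefficients of $Q$ into $\sqrt{I}=I$. The only difference is one of explicitness: the paper compresses the last step into the single clause ``since $R$ is integrally closed, $Q$ and $S_1$ have their non-leading coefficients in $\sqrt{I}=I$'', whereas you unpack it via a splitting field and an appeal to Lemma~\ref{lemGU1}; this is the standard justification of that clause, though in the constructive spirit of the paper one would want to replace the splitting field by a formal universal splitting algebra.
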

\begin{proof}{Proof}
The existence of $M$ is easy. We write $x= \sum a_kb_k$ with $a_k$
in $I$ and $b_k$ in $B$, and consider the sub $R$-algebra
$T$ of $S$ generated by the $b_k$. This is a faithfull \tf $R$-module.
We write the matrix of the multiplication by $x$ over
a system of generators of the $R$-module $C$, which has all its
coefficients in $I$, and we take its characteristic polynomial.\\
Let $L$ be the quotient field of  $S$. Given the Bezout relation
 $UP+VM= Q$,
we have $Q(x)= 0$ in $K[x]\subseteq L$ and hence in $S$. Given the 
relation
$QS_1= S$, and since $R$ is integrally closed, $Q$ and $S_1$ have their
non leading coefficients in $\sqrt{I}= I$. Finally the quotient 
$P_1= P/Q$
can be computed in $R[X]$ by Euclidean division.
\end{proof}
NB: this lemma can be seen as the computational content of the
lemma in classical mathematics that the monic minimal polynomial
of $x$ has all its non leading coefficients in $I$.

\begin{proposition}
\label{propGD1}  {\em (Going Down in one step)}
Let $R\subseteq S$ be two integral rings with $S$ entire over $R$
and $R$ integrally closed.
Let $\cQ_1$ be a saturated \proi in  $S$, $\cP_1= (I_1,U_1)$ its trace 
on
$R$ and
$\cP_0= (I_0,U_0)$ a saturated \proi of $R$ with $I_0\subseteq I_1$.
If the \proc $(\cP_0,\cQ_1)$ collapses in $S$, then the
\proi $\cQ_1$ collapses in $S$ (and a fortiori the \proi  $\cP_1$ 
collapses
in $R$).
\end{proposition}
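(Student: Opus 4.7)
The plan is to extract from the given collapsus of $(\cP_0,\cQ_1)$ an explicit element of $V_1\cap J_1$, which is what it means for the saturated \proi $\cQ_1$ to collapse. First I would unfold the hypothesis: since $\cQ_1=(J_1,V_1)$ is saturated we have $\cM(V_1)=V_1$, $\gen{J_1}_S=J_1$, and $V_1+J_1\subseteq V_1$, so the collapsus equation of $(\cP_0,\cQ_1)$ in $S$ simplifies to $u_0\,y + i_0 = 0$ with $u_0\in\cM(U_0)\subseteq R$, $y\in V_1$, and $i_0\in\gen{I_0}_S = I_0 S$. The target then becomes to produce $y^k\in I_0 S$ for some $k\ge 1$, for then $y^k\in I_0 S\subseteq I_1 S\subseteq J_1$ (using $I_0\subseteq I_1$ and $I_1=J_1\cap R$), and radicality of the saturated ideal $J_1$ yields $y\in J_1\cap V_1$. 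The ``a fortiori'' claim is automatic: as soon as $V_1\cap J_1\neq\emptyset$, the saturation axioms force $1\in J_1$ and then $\cQ_1=(S,S)$, whose trace $\cP_1$ is the trivial $(R,R)$.

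To produce a polynomial identity for $y$ itself, I would start from any monic integral equation $f(Y)=Y^m+b_{m-1}Y^{m-1}+\cdots+b_0\in R[Y]$ of $y$ over $R$ and form its ``rescaling'' $F(X):=u_0^m f(X/u_0)=X^m+u_0 b_{m-1}X^{m-1}+\cdots+u_0^m b_0\in R[X]$, which is monic and satisfies $F(u_0 y)=0$. Applying lemma \ref{lemGD0} to $u_0 y\in I_0 S$ (the ideal $I_0$ is radical by saturation of $\cP_0$), with $F$ as the auxiliary polynomial, gives a monic $Q(X)=X^k+c_{k-1}X^{k-1}+\cdots+c_0\in R[X]$ with all $c_i\in I_0$, with $Q(u_0 y)=0$, and a factorisation $F=Q\cdot G$ in $R[X]$ with $G$ also monic.

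The crucial step is descending from $Q$ (a polynomial for $u_0 y$) to a polynomial for $y$. Substituting $X=u_0 Y$ in $F=QG$ yields $u_0^m f(Y)=Q(u_0 Y)\,G(u_0 Y)$, so in $K[Y]$ we get $f(Y)=\hat Q(Y)\,\hat G(Y)$ where $\hat Q(Y):=u_0^{-k}Q(u_0 Y)$ and $\hat G(Y):=u_0^{-(m-k)}G(u_0 Y)$ are both monic. Integral closedness of $R$ guarantees that a monic factorisation in $K[Y]$ of a monic polynomial in $R[Y]$ descends to $R[Y]$, so $\hat Q(Y)=Y^k+c'_{k-1}Y^{k-1}+\cdots+c'_0\in R[Y]$, with $c_i=u_0^{k-i}c'_i$. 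Now the conjugacy clause $(u_0\in U_0,\ u_0 a\in I_0)\Rightarrow a\in I_0$ from the saturation of $\cP_0$, iterated $k-i$ times on $u_0^{k-i}c'_i\in I_0$, extracts each $c'_i\in I_0$. Finally $\hat Q(y)=u_0^{-k}Q(u_0 y)=0$ rewrites as $y^k=-\sum_{i<k}c'_i\,y^i\in I_0 S$, achieving the target.

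The main obstacle is this last descent, which is precisely where the hypothesis that $R$ is integrally closed enters: one must know that $\hat Q$ actually lies in $R[Y]$, for which the monic-factorisation property of integrally closed domains is essential. Note that simply invoking the monic $M$ from lemma \ref{lemGD0} would only yield $(u_0 y)^n\in I_0 S$, giving $u_0 y\in\sqrt{I_0 S}$, with no way to peel off $u_0$ from $y$; it is really the extra information that $Q$ divides the tailored polynomial $F$ in $R[X]$ (whose change of variable $X=u_0 Y$ produces $u_0^m f(Y)$) that makes the $u_0$-rescaling coherent.
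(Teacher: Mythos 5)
Your proof is correct, and it uses exactly the ingredients of the paper's own argument: the reduction of the collapsus of $(\cP_0,\cQ_1)$ to an equation $u_0y+i_0= 0$ with $u_0\in\cM(U_0)$, $y\in V_1$, $i_0\in I_0S$ (using that $\cQ_1$ is complete), lemma \ref{lemGD0}, the rescaling $X\leftrightarrow u_0Y$, the conjugation property of the saturated $\cP_0$ to strip powers of $u_0$ off coefficients known to lie in $I_0$, and the radicality of $J_1$ to pass from $y^k\in I_0S\subseteq I_1S\subseteq J_1$ to $y\in J_1\cap V_1$. The difference is organizational. The paper argues by a two--case analysis: a ``lucky'' case in which the monic equation of $v_1$ over $R$ is already the $u_0$-rescaling of the equation of $j_0$, and a general case reduced to it by two further applications of lemma \ref{lemGD0} (once with the unit ideal, to produce a monic gcd annihilating $v_1$ with coefficients in $R$; once with $I_0$, to see that its rescaling $A_1$ has non leading coefficients in $I_0$). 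You instead make a single uniform application of the lemma, to $u_0y$ against the pre-rescaled polynomial $F(X)= u_0^mf(X/u_0)$, and then descend the factorisation $F= QG$ along $X= u_0Y$, justifying $\hat Q\in R[Y]$ by the fact that monic factors in $K[Y]$ of a monic polynomial of $R[Y]$ have coefficients in $R$ when $R$ is integrally closed. That descent fact is precisely the integral-closedness argument already used inside the paper's proof of lemma \ref{lemGD0}, so your route stays within the paper's toolkit; what it buys is the elimination of the case split (your $Q$ and $\hat Q$ play the roles of the paper's $A_1$ and $B_1$, with the ``first case'' subsumed), while the paper's organisation channels every appeal to integral closedness through the single lemma \ref{lemGD0}. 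One shared implicit point: both arguments divide by $u_0$ in the fraction field (your $\hat Q(Y)= u_0^{-k}Q(u_0Y)$, the paper's $B_1(X/u_0)$); this is harmless, since in the degenerate situation $0\in U_0$ the saturation of $\cP_0$ forces $I_0= R$, and then $1\in I_1\subseteq J_1$ gives the collapsus of $\cQ_1$ directly.
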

\begin{proof}{Proof}
Take $\cQ_1= (J_1,V_1)$.
Since $\cQ_1$ is complete, we can write the collapsus on the form
$$ u_0v_1= j_0\qquad {\rm avec}\; u_0\in U_0,\; v_1\in V_1,\;
j_0\in I_0B
$$
We know that $j_0$ cancels a monic polynomials $A$ with non leading
coefficients in $I_0$
$$ A(X)= X^k+\sum_{i<k}\; a_iX^i\qquad {\rm avec}\; a_i\in I_0
$$
hence $v_1$ cancels $M(u_0X)$
$$ 0= R(u_0v_1)= u_0^kv_1^k+\sum_{i<k}\; (a_iu^i)v_1^i
$$
We know also that $v_1$ cancels a monic polynomial $B$ with coefficients 
in
$R$ of degre $d$.\\
{\em First case:} $u_0^dS(X)= A(u_0X)$. Then the non leading 
coefficients of
$B$, the  $b_i= a_i/u_0^{k-i}$ are in $I_0$, since $\cP_0$ is saturated 
in
$R$. Hence $v_1^k\in I_0S$, so $v_1\in
\sqrt{I_0S}\subseteq\sqrt{I_1S}\subseteq J_1 $ ($I_0\subseteq 
I_1\subseteq
J_1$ and $J_1$ is a radical ideal of $S$). Hence $v_1\in V_1\cap J_1$.
\\
{\em Second case:} We don't have $u_0^dS(X)= A(u_0X)$. We shall
reduce this case to the first one. We apply the previous
lemma with $v_1$ and the ideal $R$. We get that $v_1$ cancels
the polynomial $B_1$ monic gcd of $A(u_0X)$ and $B(X)$, and
with coefficients in $R$, Let $d_1$ be the degre of $S_1$. We
consider the following monic polynomial with coefficients in $R$
$A_1(X)= u_0^{d_1}S_1(X/u_0)$. We have $A_1(j_0)= 0$,
$A_1(u_0X)= u_0^{d_1}B_1(X)$,  $A_1(X)$ is the monic gcd of $R(X)$ and
$u_0^dB(X/u_0)$.
Applying the previous lemma with  $j_0$ and the ideal $I_0$ we get that
$A_1$ has its non leading coefficients in $I_0$. We are thus back, with
$A_1$ and $B_1$, to the first case.
\end{proof}
Notice that the beginning of the proof (before the analysis of the 
second
case, that Sharp avoids by considering the minimal polynomial of $j_0$)
is almost the same as the proof in Sharp's book. Sharp does not use
the word ``collapsus'', and has for hypothesis that $\cQ_1$ is given
as a prime ideal, and shows that it would be absurd to have an equality
 $u_0v_1= j_0$ since this would imply
 $v_1\in V_1\cap J_1$ which is contradictory with his hypothesis.
This is a good case showing that our work consists essentially
to explicitate algorithms that are only implicit in classical arguments.
This illustrates also well a systematic feature of classical proofs, 
which
inverses positive and negative by the introduction of abstract objects.
The collapsus, which is a concrete fact, is seen as a negative fact
(``this would be absurd since we have a prime ideal'') while the
absence of collapsus, which requires a priori an infinity of 
verifications,
and hence is by essence negative, is felt as a positive fact, which
ensures the existence of abstract object. The price to pay for the
apparent comfort provided by these abstract objects is to transform
constructive arguments by non constructive one, changing the
(constructive) direct proof of
$P\Rightarrow Q$ by a proof by contradiction of
$\lnot Q\Rightarrow \lnot P$, \cad
a proof of $\lnot\lnot P\Rightarrow \lnot\lnot Q$.
\begin{theorem}
\label{thGD} {\em (Going Down)}
Let $R\subseteq S$ be two entire rings with $S$ integral over $R$
and $R$ integrally closed.
Let $\cC_1$ be a saturated \proc of $R$ and $\cC_2$
a saturated \proc of $S$, non empty. Let $I_\ell$ be the last of the 
ideals
in the \proc
$\cC_1$ and $I_{\ell+1}$ the first of the ideals in the \proc
$\cC_2|_R$. We assume $I_\ell\subseteq I_{\ell+1}$.
If the \proc $\cC_1\bullet \cC_2$ collapses in $S$, then the \proc 
$\cC_2$
collapses in $S$.
\end{theorem}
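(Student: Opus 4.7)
The plan is to reduce to the case $\cC_2=(\cQ_0)$ and then iterate the argument of Proposition \ref{propGD1} along the length of $\cC_1$. Since $\cC_2$ is saturated, hence complete, Fact \ref{factSatur}(2) gives that $\cC_1\bullet\cC_2$ collapses in $S$ \ssi $\cC_1\bullet(\cQ_0)$ collapses in $S$, where $\cQ_0=(J_0,V_0)$ is the first \proi of $\cC_2$. Moreover, since $(\cQ_0)$ is extracted from $\cC_2$, Fact \ref{factColAc} shows that the collapse of $\cQ_0$ in $S$ entails the collapse of $\cC_2$. So it suffices to prove: if $\cC_1\bullet(\cQ_0)$ collapses in $S$, then $\cQ_0$ collapses in $S$.

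By Fact \ref{factProcComp}, this collapse yields an equality
$$ u_0(u_1(\cdots(u_{\ell+1}+j_{\ell+1})+\cdots)+j_1)+j_0 \;=\; 0 $$
in $S$, with $u_i\in\cM(U_i)$ and $j_i\in I_iS$ for $0\le i\le\ell$, and $u_{\ell+1}\in V_0$, $j_{\ell+1}\in J_0$. Set $A_{\ell+1}:=u_{\ell+1}+j_{\ell+1}$ and $A_i:=u_iA_{i+1}+j_i$ for $i=\ell,\ldots,0$, so that $A_0=0$. The main claim, proved by induction on $i$ from $1$ to $\ell+1$, is that $A_i\in\sqrt{I_{i-1}S}$. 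For $i=1$ we have $u_0A_1=-j_0\in I_0S$, and the proof of Proposition \ref{propGD1} applies: its case-analysis on the monic polynomial killing $u_0A_1$ with non-leading coefficients in $I_0$ (Lemma \ref{lemGD0}) and on the monic polynomial killing $A_1$ coming from the integrality of $S$ over $R$ establishes the more general intermediate statement that for any saturated \proi $(I,U)$ of $R$, any $u\in\cM(U)$, and any $v\in S$ with $uv\in IS$, one has $v\in\sqrt{IS}$. For the step $i\to i+1$, $u_iA_{i+1}=A_i-j_i\in\sqrt{I_iS}$ (because $A_i\in\sqrt{I_{i-1}S}\subseteq\sqrt{I_iS}$ and $j_i\in I_iS$), hence $(u_iA_{i+1})^k=u_i^kA_{i+1}^k\in I_iS$ for some $k$; applying the distilled lemma to $u_i^k\in\cM(U_i)$ and $A_{i+1}^k\in S$ with respect to the saturated \proi $(I_i,U_i)$ gives $A_{i+1}^k\in\sqrt{I_iS}$, whence $A_{i+1}\in\sqrt{I_iS}$.

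For $i=\ell+1$ we obtain $A_{\ell+1}\in\sqrt{I_\ell S}$. Since $I_\ell\subseteq I_{\ell+1}=J_0\cap R\subseteq J_0$, we have $I_\ell S\subseteq J_0$, and since $J_0$ is a radical ideal of $S$ (by saturation of $\cQ_0$), $\sqrt{I_\ell S}\subseteq J_0$. Therefore $u_{\ell+1}=A_{\ell+1}-j_{\ell+1}\in J_0$, so $u_{\ell+1}\in V_0\cap J_0$; the conjugacy of $(J_0,V_0)$ then forces $1\in J_0$, i.e., $\cQ_0$ collapses in $S$. The main obstacle is to extract from Proposition \ref{propGD1}'s proof the intermediate lemma above in its fully general form, where $v$ is allowed to be any element of $S$ and need not lie in a prescribed monoid $V_1$. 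One must verify that the gcd case-analysis in that proof depends only on $v\in S$ and on its integrality over $R$, which is indeed the case since the hypothesis $v\in V_1$ is only invoked at the very end, to conclude $v\in V_1\cap J_1$.
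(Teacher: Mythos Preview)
Your proof is correct, and the initial reduction to $\cC_2=(\cQ_0)$ via Fact~\ref{factSatur}(2) is exactly how the paper begins. From that point on, however, you take a genuinely different route.

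The paper proceeds by induction on the length of $\cC_1$: writing $\cC_1=(\cP_1,\ldots,\cP_\ell)$, it saturates the tail $(\cP_2,\ldots,\cP_\ell,\cQ_0)$ in $S$, observes that the first ideal of this saturation contains $I_1$, and then applies Proposition~\ref{propGD1} as a black box to strip off $\cP_1$; the induction hypothesis finishes the job. Your approach instead unfolds the collapse equation explicitly and climbs it term by term, applying at each step not Proposition~\ref{propGD1} itself but the lemma you extract from its proof: if $(I,U)$ is a saturated \proi of $R$, $u\in U$, $v\in S$ and $uv\in IS$, then $v\in\sqrt{IS}$. Your observation that the proof of Proposition~\ref{propGD1} never uses $v_1\in V_1$ until the final line is correct, so the extraction is legitimate.

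What each approach buys: the paper's argument is more structural and uses Proposition~\ref{propGD1} off the shelf, at the cost of having to saturate intermediate chains in $S$ (and to check that the first ideal of the saturation still lies over something containing $I_1$). Your argument is more elementary and entirely explicit --- no auxiliary saturations in $S$ are needed --- but it requires reopening the proof of Proposition~\ref{propGD1} to isolate its computational core. Both are perfectly valid; yours is arguably closer in spirit to the paper's stated goal of exhibiting the algebraic identities hidden behind the classical statement.
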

\begin{proof}{Proof}
If $\ell\ge 1$ and $r\ge1$ are the numbers of \prois in the \procs 
$\cC_1$
and $\cC_2$, write $GD_{\ell,r}$ the property that we want to establish.
We know already $GD_{1,1}$ from the Going Down in one step.\\
Since the  \procs  $\cC_2$ and  $\cC_2|_R$ are saturated, and by fact
\ref{factSatur}~(2) only the first \prois of the second \procz matters 
for
the collapsus.
Thus it is enough to show  $GD_{\ell,1}$, \cad the case where $\cC_2$
contains only one \proiz. We proceed by induction on $\ell$.
Take $\cC_1= (\cP_1,\ldots,\cP_\ell)$ a staurated \proc in  $R$,
($\cP_k= (I_k,U_k)$) and $\cQ$ a saturated
\proi in $S$.
Assume that $\cC_1\bullet \cQ$ collapses in  $S$.
Let $\cC= (\cP_2,\ldots,\cP_\ell,\cQ)$ and
$\cC'$ its saturation in $S$. If $\cQ_2= (J_2,V_2)$ 
is the first of the \prois in
$\cC'$ we have $I_1\subseteq I_2\subseteq (J_2\cap R)$, and we can apply
the Going Down in one step (or more precisely  $GD_{1,\ell}$):
$\cC'$ collapses in $S$. Hence $\cC$ collapses in $S$. One can
then apply the induction hypothesis.
\end{proof}

\begin{corollary}
\label{corGD1}
{\rm (Going Down, classical version)} \Tcgi
Let $R\subseteq S$ be two entire rings with $S$ integral over $R$
and $R$ integrally closed.
Let  $Q_{\ell+1}\subseteq\cdots\subseteq Q_{\ell+r}$ be prime ideals in
$S$,  $P_i= Q_i\cap A$  $(i= \ell+1,\ldots,\ell+ r)$, and
$P_{1}\subseteq\cdots\subseteq P_{\ell}$
be prime ideals in $R$ with $P_{\ell}\subseteq P_{\ell+1}$.
There exist  $Q_{1},\ldots,Q_{\ell}$,  prime ideals in $S$  which
satisfy $Q_1\subseteq\cdots\subseteq Q_{\ell}\subseteq Q_{\ell+1}$ and
$Q_{j}\cap A= P_{j}$ pour  $j= 1,\ldots,\ell$.
\end{corollary}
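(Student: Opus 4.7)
The plan is to imitate the deduction of Corollary \ref{corGU1} from Theorem \ref{thGU}, swapping the roles of the two chains. Concretely, associate to the data the saturated (in fact frozen) idealistic chain $\cC_1 = ((P_1,R\setminus P_1),\ldots,(P_\ell,R\setminus P_\ell))$ of $R$ and the saturated frozen idealistic chain $\cC_2 = ((Q_{\ell+1},S\setminus Q_{\ell+1}),\ldots,(Q_{\ell+r},S\setminus Q_{\ell+r}))$ of $S$. The hypothesis $P_\ell\subseteq P_{\ell+1} = Q_{\ell+1}\cap R$ is exactly the compatibility condition $I_\ell\subseteq I_{\ell+1}$ required in Theorem \ref{thGD}, with $\cC_2|_R$ having first ideal $P_{\ell+1}$.

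Next I would run Theorem \ref{thGD} in its contrapositive form. Since each $Q_{\ell+j}$ is a proper prime ideal of $S$ with complementary monoid $S\setminus Q_{\ell+j}$, the idealistic chain $\cC_2$ does \emph{not} collapse in $S$. Theorem \ref{thGD} then tells us that the concatenation $\cC = \cC_1\bullet\cC_2$, viewed as an idealistic chain in $S$, does not collapse in $S$ either. We are now in position to apply Theorem \ref{th.nstformel} (the formal \nst for chains of prime ideals, which uses the compactness theorem): there exist detachable prime ideals $Q'_1\subseteq\cdots\subseteq Q'_{\ell+r}$ of $S$ refining $\cC$, meaning $P_j\subseteq Q'_j$ and $(R\setminus P_j)\cap Q'_j = \emptyset$ for $j\le\ell$, and $Q_{\ell+j}\subseteq Q'_{\ell+j}$ with $(S\setminus Q_{\ell+j})\cap Q'_{\ell+j} = \emptyset$ for $j\ge 1$.

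The frozen character of $\cC_2$ forces $Q'_{\ell+j} = Q_{\ell+j}$ for $j=1,\ldots,r$ (from $Q_{\ell+j}\subseteq Q'_{\ell+j}$ and $Q'_{\ell+j}\subseteq Q_{\ell+j}$); similarly, for $j\le\ell$ the two inclusions $P_j\subseteq Q'_j\cap R$ and $Q'_j\cap R\subseteq P_j$ yield $Q'_j\cap R = P_j$. Setting $Q_j := Q'_j$ for $j=1,\ldots,\ell$ provides the required chain, with $Q_\ell\subseteq Q'_{\ell+1} = Q_{\ell+1}$ coming for free from the refining chain.

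The only genuinely delicate point is to be sure that the constructive Theorem \ref{thGD}, as stated, applies to an $\cC_2$ that is frozen (so the last part of the chain contains full complements of prime ideals), and that Theorem \ref{th.nstformel} is indeed the right tool to convert the non-collapse of $\cC_1\bullet\cC_2$ into an honest chain of detachable prime ideals in $S$; both are invocations of results already established in the excerpt, so no further computation is needed. As in Corollary \ref{corGU1}, I expect the only subtlety to be bookkeeping — making sure that the classical reading ``frozen idealistic primes = detachable prime ideals'' is used consistently on both sides of $R\subseteq S$.
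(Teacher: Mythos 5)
Your proof is correct and is essentially the paper's own argument: the paper simply says ``like in the proof of corollary \ref{corGU1}'', and what you wrote out is exactly that mirrored scheme — encode the data as frozen saturated \procsz, use theorem \ref{thGD} contrapositively to see that $\cC_1\bullet\cC_2$ does not collapse in $S$, then invoke theorem \ref{th.nstformel} and the frozen character of both pieces to pin down the traces and the top part of the refining chain. The point you flag as delicate is harmless: frozen chains are in particular saturated, so theorem \ref{thGD} applies as stated.
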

\begin{proof}{Proof}
Like in the proof of corollary \ref{corGU1}.
\end{proof}

We finish with a Going Down theorem for flat extensions.
(cf. \cite{Mat}).
\begin{theorem}
\label{thGDplat} {\em (Going Down for flat extensions)}
Let $R\subseteq S$ be two comutative rings with $S$ flat over $A$.
\begin{itemize}
\item [$(1)$] Let $\cQ_1= (J_1,V_1)$ be a saturated \proi of $S$,
$\cP_1= (I_1,U_1)$ its trace on $R$ and $\cP_0= (I_0,U_0)$ a saturated 
\proi of
$A$ with $I_0\subseteq I_1$.
It the \proc $(\cP_0,\cQ_1)$ collapses in $S$, then the
\proi $\cQ_1$ collapses in $S$ (and a fortiori the \proi  $\cP_1$ 
collapses
in $R$).
\item [$(2)$] Let $\cC_1$ be a saturated \proc of $R$ and $\cC_2$
a saturated \proc of $S$, non empty. Let $I_\ell$ be the last of the 
ideals
in the \proc
$\cC_1$ and $I_{\ell+1}$ the first of the ideals in the \proc
$\cC_2|_R$. Assume $I_\ell\subseteq I_{\ell+1}$.
If the \proc $\cC_1\bullet \cC_2$ collapses in $S$, then the \proc 
$\cC_2$
collapses in $S$.
\end{itemize}
\end{theorem}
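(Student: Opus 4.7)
The plan is to imitate the structure of the proof of Theorem \ref{thGD}, replacing the monic-polynomial argument of Proposition \ref{propGD1} by the equational criterion for flatness. I will first establish the one-step statement (1); then (2) will follow from (1) by the same inductive reduction that passes from Proposition \ref{propGD1} to Theorem \ref{thGD}.

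For (1), since $\cQ_1$ is saturated and hence complete, the collapsus of $(\cP_0,\cQ_1)$ in $S$ can be written in the form $u_0 v_1 = j_0$ with $u_0 \in U_0$, $v_1 \in V_1$, and $j_0 \in I_0 S$. Writing $j_0 = \sum_{k=1}^{n} a_k b_k$ with $a_k \in I_0$ and $b_k \in S$ yields a linear dependence relation
$$u_0 \cdot v_1 \;+\; \sum_{k=1}^{n} (-a_k)\cdot b_k \;=\; 0$$
in $S$ whose coefficients $u_0,-a_1,\ldots,-a_n$ all lie in $R$. The equational criterion for flatness then produces elements $y_1,\ldots,y_m \in S$ and scalars $c_{0j},c_{kj}\in R$ such that $v_1 = \sum_{j} c_{0j}\, y_j$, each $b_k = \sum_j c_{kj}\, y_j$, and $u_0\, c_{0j} = \sum_k a_k\, c_{kj} \in I_0$ for every $j$. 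Because $\cP_0 = (I_0,U_0)$ is saturated and $u_0 \in U_0$, the first conjugation condition of Definition \ref{defConjug} forces $c_{0j} \in I_0$ for every $j$, so $v_1 \in I_0 S$. Since $I_0 \subseteq I_1 = J_1 \cap R$ and $J_1$ is an ideal of $S$, we conclude $v_1 \in J_1 \cap V_1$, which shows that the saturated prime $\cQ_1$ collapses.

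For (2), the argument is a formal transcription of the proof of Theorem \ref{thGD}. By Fact \ref{factSatur}(2), only the first idealistic prime of $\cC_2$ matters for the collapsus, so it suffices to treat the case where $\cC_2$ reduces to a single saturated prime $\cQ$; one then inducts on the length $\ell$ of $\cC_1 = (\cP_1,\ldots,\cP_\ell)$. The case $\ell=1$ is exactly (1). For the inductive step, one saturates $(\cP_2,\ldots,\cP_\ell,\cQ)$ in $S$ to a chain $\cC'$ whose first idealistic prime $\cQ'_2 = (J'_2,V'_2)$ satisfies $I_1 \subseteq I_2 \subseteq J'_2 \cap R$ (by saturation of $\cC_1$); applying (1) to $\cP_1$ and $\cQ'_2$ shows that $\cC'$, and hence $(\cP_2,\ldots,\cP_\ell,\cQ)$, collapses in $S$, and the induction hypothesis then finishes the argument.

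The only genuine obstacle is ensuring that the equational criterion for flatness is available in this explicit constructive form; once the lifting property for relations is built into (or derived from) the constructive notion of flatness, the remainder of the proof is a purely formal analogue of the integral case and introduces no new difficulty.
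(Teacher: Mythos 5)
Your proposal is correct and follows essentially the same route as the paper: the paper's proof of (1) writes the collapse as $v_1u_0+j_0=0$ with $j_0\in I_0S$ and then "decomposes" the resulting $R$-linear relation via flatness into a matrix $M$ over $R$ killing $(u_0,i_1,\ldots,i_r)$ together with elements $b'_j\in S$, which is exactly your equational criterion with $M=(c_{ij})$ and $y_j=b'_j$, and the saturation of $\cP_0$ then forces the relevant coefficients into $I_0\subseteq J_1$, giving $v_1\in J_1\cap V_1$. For (2) the paper likewise just says it follows as in the proof of theorem \ref{thGD}, which is the inductive reduction you describe.
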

\begin{proof}{Proof}
It is enough to show (1), because we can then show (2) like
in the proof of theorem \ref{thGD}.
Take $J_0= I_0B$. If $(\cP_0,\cQ_1)$ collapses in $S$, we have $v_1\in 
V_1$,
$u_0\in U_0$ and $j_0\in J_0$ with $v_1u_0+j_0= 0$. We write
$j_0= i_{1}b_1+\cdots+i_{r}b_r$ with the $i_{k}\in I_0$ and $b_{k}\in 
B$.
We get that the elements
$v_1,\; b_1,$ $\ldots,b_r$
of $S$ are linearly dependent over $R$.
$$ (u_0,i_{1},\ldots,i_{r})
\pmatrix{
    v_1     \cr
    b_1     \cr
   \vdots      \cr
    b_r
} = 0
$$
Since $S$ is flat over $R$ this relation can be decomposed to
$$ (u_0,i_{1},\ldots,i_{r})M= (0,\ldots,0)\; \; {\rm and} \; \;
\pmatrix{
    v_1     \cr
    b_1     \cr
   \vdots      \cr
    b_r
} =  M
\pmatrix{
    b'_1     \cr
    b'_2     \cr
   \vdots      \cr
    b'_s
}
$$
where $M= (m_{k,\ell})\in R^{s{\times}(r+1)}$ and the $b'_k$ are in 
$S$.
Each relation
$$ u_0m_{0,\ell}+i_{1}m_{1,\ell}+\cdots+i_{r}m_{1,\ell}= 0
$$
implies that $m_{0,\ell}\in I_0$ since $\cP_0$ is saturated.
A fortiori $m_{0,\ell}\in J_1$. Hence the relation
$$ v_1= m_{0,1}b'_1+\cdots+m_{0,s}b'_s
$$
is a collapsus $\cQ_1$ of $S$.
\end{proof}

For this proof, that was literally forced upon us, we did not try to
analyse the quite abstract argument given by Matsumara for
the following corollary.

\begin{corollary}
\label{corGD1plat}
{\rm (Going Down for flat extension, classical version)} \Tcgi
Let $R\subseteq S$ be two commutative rings with $S$ flat over $R$.
Let $Q_{\ell+1}\subseteq\cdots\subseteq Q_{\ell+r}$ be prime ideals
in $S$,  $P_i= Q_i\cap R$  $(i= \ell+1,\ldots,\ell+ r)$, and
$P_{1}\subseteq\cdots\subseteq P_{\ell}$
be prime ideals in $R$ with $P_{\ell}\subseteq P_{\ell+1}$.
There exist $Q_{1},\ldots,Q_{\ell}$, prime ideals in $S$
such that  $Q_1\subseteq\cdots\subseteq Q_{\ell}\subseteq Q_{\ell+1}$ 
and
$Q_{j}\cap R= P_{j}$ for  $j= 1,\ldots,\ell$.
\end{corollary}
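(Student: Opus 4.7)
The plan is to mimic, step for step, the proof of corollary \ref{corGU1}, simply substituting the constructive Going Down for flat extensions (theorem \ref{thGDplat}(2)) for the role played there by constructive Going Up (theorem \ref{thGU}). The given chains of prime ideals are re-expressed as frozen saturated idealistic chains, the contrapositive of the constructive statement is applied to show that the concatenated chain does not collapse, and finally the formal Nullstellensatz \ref{th.nstformel} gives a genuine chain of prime ideals refining the concatenation.

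More precisely, first I would set $\cC_1 = (\cP_1,\ldots,\cP_\ell)$ with $\cP_j = (P_j,\,R\setminus P_j)$, the saturated frozen \proc in $R$ associated to the given chain, and $\cC_2 = (\cQ_{\ell+1},\ldots,\cQ_{\ell+r})$ with $\cQ_i = (Q_i,\,S\setminus Q_i)$, the saturated frozen \proc in $S$ associated to the given chain. Since $\cC_2$ comes from a genuine chain of prime ideals in $S$, it does not collapse in $S$. The first ideal of the trace $\cC_2|_R$ is $Q_{\ell+1}\cap R = P_{\ell+1}$, and by hypothesis $P_\ell \subseteq P_{\ell+1}$, so the inclusion $I_\ell \subseteq I_{\ell+1}$ required in theorem \ref{thGDplat}(2) holds.

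Next, by the contrapositive of theorem \ref{thGDplat}(2), the concatenated \proc $\cC_1\bullet \cC_2$ does not collapse in $S$. Applying the formal Nullstellensatz for chains of prime ideals (theorem \ref{th.nstformel}) inside $S$ yields detachable prime ideals $Q'_1\subseteq\cdots\subseteq Q'_{\ell+r}$ of $S$ refining $\cC_1\bullet\cC_2$. Because $\cC_2$ is frozen, any deviation of $Q'_{\ell+j}$ from $Q_{\ell+j}$ for $j\ge 1$ would place some element in both the ideal and monoid of a component, forcing a collapse; hence $Q'_{\ell+j} = Q_{\ell+j}$. For $j\le\ell$, the refinement conditions give $P_j\subseteq Q'_j$ and $(R\setminus P_j)\cap Q'_j = \emptyset$, so $Q'_j\cap R = P_j$; setting $Q_j := Q'_j$ finishes the proof.

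There is no genuine obstacle here: the constructive content of Going Down for flat extensions has already been packaged into theorem \ref{thGDplat}, and the passage from this statement to its classical corollary is the standard compactness-theorem argument illustrated by corollary \ref{corGU1}. The only point requiring a moment's attention is that refinements of a frozen chain are forced to agree with it, but this is immediate from definition \ref{defproch3} together with the fact that $\cC_2$ itself does not collapse in $S$.
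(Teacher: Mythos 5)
Your argument is correct and is exactly the argument the paper intends: it mirrors the proof of corollary \ref{corGU1} (as the paper does for corollary \ref{corGD1}), encoding the given chains as frozen saturated \procsz, applying the contrapositive of theorem \ref{thGDplat}~(2) to see that $\cC_1\bullet\cC_2$ does not collapse, and then invoking theorem \ref{th.nstformel} together with the frozen/trace observations to recover the desired primes $Q_1,\ldots,Q_\ell$. No gaps; this matches the paper's approach.
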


\newpage

\newpage
\setcounter{section}{1}\setcounter{subsection}{0}
\setcounter{theorem}{0}
\def\thesection{\Alph{section}}
\addcontentsline{toc}{section}{Annex: The \tcg and LLPO}
\section*{Annex: Completeness, compactness theorem, LLPO and geometric theories}

\subsection{Theories and models}

  We fix a set $V$ of {\em atomic propositions} or
{\em propositional letters}. A proposition $\phi,\psi,\dots$
is a syntactical
object built from the atoms $p,q,r\in V$ with the
usual logical connectives
$$ 0,\;\;1,\;\;\phi\wedge \psi,\;\;\phi\vee\psi,\;\;\phi\rightarrow\psi,\;\;
\neg\phi$$
We let $P_V$ be the set of all propositions. Let $F_2$
be the Boolean algebra with two elements.
A {\em valuation}
is a function $v\in F_2^V$ that assigns a truth value
to any of the atomic propositions. Such a valuation can
be extended to a map
$P_V\rightarrow \{0,1\},\;\phi\longmapsto v(\phi)$
in the expected way. A {\em theory} $T$ is a subset of $P_V$.
A {\em model} of $T$ is a valuation $v$ such that $v(\phi)=1$
for all $\phi\in T$.

 More generally given a Boolean algebra $B$ we can define
$B$-valuation to be a function $v\in B^V$. This can be extended
as well to a map $P_V\rightarrow B,\;\phi\longmapsto v(\phi)$.
A {\em $B$-model} of $T$ is a valuation $v$ such that $v(\phi)=1$
for all $\phi\in T$. The usual notion of model is a direct special
case, taking for $B$ the Boolean algebra $F_2$.
For any theory there exists always a
free Boolean algebra over which $T$ is a model, the {\em Lindenbaum}
algebra of $T$, which can be also be defined as the Boolean
algebra generated by $T$, thinking of the elements of $V$ as
generators and the elements of $T$ as relations. The theory $T$
is {\em formally consistent} if, and only if, its Lindenbaum algebra is not trivial.

\subsection{Completeness theorem}

\begin{theorem}
(Completeness theorem) Let $T$ be a theory. If $T$ is formally
consistent then $T$ has a model.
\end{theorem}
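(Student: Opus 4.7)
The plan is to construct a model by first forming the Lindenbaum algebra $B_T$ of $T$, namely the Boolean algebra generated by $V$ modulo the relations $\phi = 1$ for $\phi \in T$. The canonical valuation $v_0 : V \to B_T$ extends to a map $P_V \to B_T$ satisfying $v_0(\phi) = 1$ for every $\phi \in T$; tautologically this is a $B_T$-model of $T$. To upgrade this into an $F_2$-model, it suffices to produce a Boolean algebra morphism $\pi : B_T \to F_2$, for then $v = \pi \circ v_0$ is an $F_2$-valuation with $v(\phi) = 1$ for all $\phi \in T$. The hypothesis of formal consistency is exactly the statement that $B_T$ is non-trivial, i.e.\ $0 \neq 1$ in $B_T$.

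The central step is therefore reduced to the following: any non-trivial Boolean algebra $B$ admits a morphism to $F_2$. Viewing $B$ as a distributive lattice, this is precisely an instance of Proposition~\ref{propTr2}: since $B \neq \mathbf{1}$, the set $\Spec(B) = \Hom(B,\Deux)$ is non-empty, and any element of it serves as the desired $\pi$ (here the target $\Deux$ is the two-element Boolean algebra $F_2$). Equivalently, one can extend the filter $\{1\}$ of $B$ to a maximal proper filter $F$ by a Zorn's lemma argument applied to the poset of proper filters; the characteristic map of $F$ is then a morphism $B \to F_2$, the Boolean structure (existence of complements) ensuring that maximality of $F$ forces $\pi$ to preserve both $\vee$ and $\wedge$.

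The main obstacle is of course the non-constructive character of this existence step. As the introduction of the paper explains, there is in general no way around it: producing a morphism $B \to F_2$ for arbitrary $V$ requires the ultrafilter lemma (a form of the axiom of choice), and even for enumerable $V$ the construction reduces to an infinite sequence of binary decisions --- deciding for each proposition $\phi_n$ whether $\phi_n$ or $\neg \phi_n$ should hold in the model being built, at each stage maintaining formal consistency of the accumulated choices --- which is exactly the strength of Bishop's $\LLPO$. The theorem is thus genuinely a classical principle; its role throughout this paper is to convert the concrete non-collapsing statements (established constructively via simultaneous collapse theorems such as Theorem~\ref{ThColSimKrA} and Theorem~\ref{thColSimT2}) into the usual abstract formulations involving prime ideals and chains of prime ideals.
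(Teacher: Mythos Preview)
Your proof is correct and follows essentially the same approach as the paper: form the Lindenbaum algebra, observe that formal consistency means it is non-trivial, and then invoke the existence of a prime ideal (equivalently, a morphism to $F_2$) in a non-trivial Boolean algebra. Your citation of Proposition~\ref{propTr2} for this last step is apt, and your added discussion of the non-constructive content (Zorn/ultrafilter lemma, and the reduction to $\LLPO$ in the enumerable case) matches the paper's own elaboration in the surrounding subsections.
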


 This theorem is the completeness theorem for propositional logic.
Such a theorem is strongly related to Hilbert's program, which
can be seen as an attempt to replace the question of existence of
model of a theory by the formal fact that this theory
is not contradictory.

Let $B$ the Lindenbaum algebra of $T$. To prove completeness, it is
enough to find a morphism $B\rightarrow F_2$ assuming that $B$ is not
trivial, wich is the same as finding a prime ideal (which is then
automatically maximal) in $B$. Thus the completeness theorem is a
consequence of the existence of prime ideal in nontrivial Boolean algebra.  Notice
that this existence is clear in the case where $B$ is finite, hence
that the completeness theorem is direct for finite theories.

\subsection{Compactness theorem}

 The completeness theorem for an arbitrary theory
can be seen as a corollary of the following
fundamental result.

\begin{theorem}
(Compactness theorem) Let $T$ be a theory.
If all finite subsets of $T$ have a model then so does $T$.
\end{theorem}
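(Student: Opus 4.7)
The plan is to reduce the compactness theorem to the completeness theorem that was just proved. Assume every finite subset of $T$ has a model; I will show that $T$ is formally consistent, and then invoke the completeness theorem to obtain a model of $T$.

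The key observation concerns the finitary character of the Lindenbaum algebra $B$ of $T$. The elements of $B$ are equivalence classes of propositions in $P_V$, and the relations that make $B$ a quotient of the free Boolean algebra on $V$ are precisely the conditions $\phi = 1$ for $\phi \in T$. Since every proposition involves only finitely many atomic propositions and each derivation of an equation in $B$ uses only finitely many of the generating relations from $T$, the triviality $1 = 0$ in $B$ would already hold in the Lindenbaum algebra $B_0$ of some finite subset $T_0 \subseteq T$. I would spell this out by observing that the quotient $B$ is the directed colimit (union) of the Lindenbaum algebras of the finite subtheories of $T$, so $0 =_B 1$ implies $0 =_{B_0} 1$ for some finite $T_0$.

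Now, by the hypothesis, the finite subtheory $T_0$ has a model $v \in F_2^V$. Extending $v$ to $P_V$ gives a map $P_V \to F_2$ that factors through $B_0$, yielding a morphism $B_0 \to F_2$. If $B_0$ were trivial, this morphism would send $1$ to $0$, which is impossible. Hence $B_0$ is nontrivial, contradicting the assumed triviality of $B$. So $B$ is nontrivial and $T$ is formally consistent. Applying the completeness theorem to $T$ produces a model.

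The only nontrivial step is the first: justifying that triviality of the Lindenbaum algebra of $T$ already occurs at a finite stage. This is the heart of the argument and the place where the finitary nature of propositional syntax is used. Everything else is a direct application of the completeness theorem and the obvious fact that a theory with a model has a nontrivial Lindenbaum algebra (a model being precisely a morphism from the Lindenbaum algebra to $F_2$). A conceptually equivalent alternative would be to run the topological proof: the sets $M_\phi = \{v \in F_2^V \mid v(\phi) = 1\}$ are clopen in the product topology on $F_2^V$, the hypothesis says the family $\{M_\phi\}_{\phi \in T}$ has the finite intersection property, and Tychonoff's theorem for the compact Hausdorff space $F_2^V$ then yields a common point, i.e., a model of $T$. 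This is the viewpoint alluded to in the introduction of the paper, but the reduction to completeness above is shorter once completeness has been established.
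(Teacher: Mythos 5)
Your proof is correct and takes essentially the same route as the paper: the paper likewise deduces compactness from the completeness theorem via the observation that formal consistency (nontriviality of the Lindenbaum algebra) is a finitary property, so it is inherited from the finite subtheories, each of which is consistent because it has a model. Your closing remark about the product topology on $F_2^V$ is exactly the alternative ``simple general proof'' the paper also sketches.
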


 Suppose indeed that the compactness theorem holds, and let
$T$ be a formally consistent theory. Then an arbitrary finite
subset $T_0$ of $T$ is also formally consistent. Furthermore, we
have seen that this implies the existence of a model for $T_0$.
 It follows then from the compactness theorem
that $T$ itself has a model.

 Conversely, it is clear that the compactness theorem follows
from the completeness theorem, since a theory is formally
consistent as soon as all its finite subsets are.

 A simple general proof of the compactness theorem
is to consider the product topology
on $\{0,1\}^V$ and to notice that the set of models
of a given subset of $T$ is a closed subset. The theorem is then
a corollary of the compactness of the space $W:=\{0,1\}^V$ when
compactness is expressed (in classical mathematics) as:
if a family of closed subsets of $W$ has non-void finite
intersections, then its intersection is non-void.

\subsection{LPO and LLPO}

If $V$ is countable (\cad discrete and enumerable)
we have the following
alternative argument. One writes $V = \{p_0,p_1,\dots \}$
and builds by induction a partial valuation $v_n$
on $\{p_i~|~i<n\}$ such that any finite subset of $T$ has
a model which extends $v_n$, and $v_{n+1}$ extends
$v_n$. To define $v_{n+1}$ one first tries $v_{n+1}(p_n) = 0$.
If this does not work, there is a finite subset of $T$
such that any of its model $v$ that extends $v_n$ satisfies
$v(p_n) = 1$ and one can take $v_{n+1}(p_n) = 1.$

The non-effective part of this argument is contained in
the choice of $v_{n+1}(p_n)$, which demands to give a
gobal answer to an infinite set of (elementary) questions.

Now let us assume also that we can enumerate the infinite
set $T$. We can then build a sequence of finite
subsets of $T$ in a nondecreasing way
$K_0\subseteq K_1\subseteq \dots$ such that any finite
subset of $T$ is a subset of some $K_n$.
Assuming we have construct $v_{n}$ such that all $K_j$'s
have a model extending $v_{n}$, in order to define
$v_{n+1}(p_n)$ we have to give a global answer to the
questions: do all $K_j$'s have a model extending $v_{n+1}$
when we choose $v_{n+1}(p_n)=1$~?
For each $j$ this is an elementary question, having
a clear answer.
More precisely let us define $g_{n}:\N\rightarrow \{0,1\}$
in the following way: $g_{n}(j)=1$ if there is a model
$v_{n,j}$ of $K_{j}$ extending  $v_{n}$ with $v_{n,j}(p_{n})=1$,
else $g_{n}(j)=0$. By induction hypothesis if $g_{n}(j)=0$
then all  $K_{\ell}$ have a model $v_{n,\ell}$ extending
$v_{n}$ with $v_{n,\ell}(p_{n})=1$, and all models  $v_{n,\ell}$
of  $K_{\ell}$ extending  $v_{n}$ satisfy  $v_{n,\ell}(p_{n})=1$
if $\ell\geq j$.
So we can ``construct" inductively
the infinite sequence of partial models $v_{n}$ by using
at each step the non-constructive Bishop's principle LPO
(Least Principle of Omniscience):
given a function $f:\N\rightarrow \{0,1\}$,
either $f=1$ or $\exists j\in\N\;f(j)\neq 1$.
This principle is applied at step $n$ to the function $g_n$.

In fact  we can slightly modify the argument and
use only a combination of Dependant Choice and
of Bishop's principle LLPO (Lesser Limited
Principle of Omniscience), which is known to be
strictly weaker than LPO: given two non-increasing functions
$g,h:\N\rightarrow \{0,1\}$ such that, for all $j$
$$g(j) = 1 \vee h(j) = 1$$
then we have $g=1$ or $h=1$.
Indeed let us define  $h_{n}:\N\rightarrow \{0,1\}$
in a symmetric way: $h_{n}(j)=1$ if there is a model
$v_{n,j}$ of $K_{j}$ extending  $v_{n}$ with $v_{n,j}(p_{n})=0$,
else $h_{n}(j)=0$. Cleraly $g_n$ and $h_n$ are non-increasing functions.
By induction hypothesis, we have for all $j$
$g_n(j) = 1 \vee h_n(j) = 1$. So,
applying LLPO, we can define $v_{n+1}(p_n)=1$
if $g_n=1$ and $v_{n+1}(p_n)=1$ if $h_n=1$.
Nevertheless, we have to use dependant choice in
order to make this choice inifnitely often since the answer
``$g=1$ or $h=1$" given by the oracle LLPO may be
ambiguous.

In a reverse way
it is easy to see that the \tcg restricted to the
countable case implies LLPO.

\subsection{Geometric formulae and theories}

{\em What would have happened if topologies without points
had been discovered before topologies with points, or if
Grothendieck had known the theory of distributive lattices?}  (G. C. Rota \cite{Rota}).


\bigskip

 A formula is {\em geometric} if, and only if, it is built only
with the connectives $0,1,\phi\wedge\psi,\phi\vee\psi$ from
the propositional letters in $V$. A theory if
a (propositional) {\em geometric} theory iff all the formula in $T$
are of the form $\phi\rightarrow \psi$ where $\phi$
and $\psi$ are geometric formulae.

 It is clear that the formulae of a geometric theory $T$ can be seen as
relations for generating a distributive lattice $L_T$ and that
the Lindenbaum algebra of $T$ is nothing else but the free Boolean
algebra generated by the lattice $L_T$. It follows from
Proposition \ref{propTrBoo} that $T$ is formally consistent if, and
only if, $L_T$ is nontrivial. Also, a model of $T$ is nothing else
but an element of $\Spec(L_T)$.

\begin{theorem}
(Completeness theorem for geometric theories) Let $T$ be a geometric
theory. If $T$ generates a nontrivial distributive lattice, then
$T$ has a model.
\end{theorem}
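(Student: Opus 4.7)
The plan is to reduce the statement to Proposition \ref{propTr2}, which (via the compactness theorem) gives the non-emptiness of $\Spec(L)$ for any nontrivial distributive lattice $L$. So essentially no new algebraic work is needed; what must be done is to make the correspondence between propositional models of the geometric theory $T$ and points of $\Spec(L_T)$ completely explicit.

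First I would make precise the identification of models of $T$ with elements of $\Spec(L_T)$. A model is a valuation $v \in F_2^V$, which extends uniquely to $P_V \to F_2$. Its restriction to those formulas built from the atoms $V$ using only $0,1,\wedge,\vee$ is a morphism of distributive lattices from the free distributive lattice on $V$ into the two-element lattice $\mathbf{2}$ (the underlying lattice of $F_2$). The valuation $v$ satisfies an axiom $\phi \rightarrow \psi$ of $T$ iff $v(\phi) \leq v(\psi)$ in $\mathbf{2}$. But these inequalities are precisely the relations used to build $L_T$ as a quotient of the free distributive lattice on $V$ by the relations in $T$. Hence $v$ is a model of $T$ iff the associated lattice morphism factors through the quotient map to $L_T$, yielding a morphism $L_T \to \mathbf{2}$. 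Conversely any element of $\Spec(L_T) = \Hom(L_T,\mathbf{2})$ restricts to a valuation on $V$ satisfying all axioms of $T$.

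Second, I would invoke Proposition \ref{propTr2}: since by hypothesis $L_T \neq \mathbf{1}$, we have $\Spec(L_T) \neq \emptyset$. Translating back through the bijection above gives a model of $T$.

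The main ``obstacle'' is not really an obstacle at all, it is just a verification: one must check that the relations $\phi \leq \psi$ (for $\phi \rightarrow \psi \in T$) we impose to form $L_T$ are exactly the relations that must be satisfied by a valuation in $\mathbf{2}$ to qualify as a model of $T$. Everything else is an immediate appeal to results already established in the paper, and one should note that, as with Proposition \ref{propTr2}, the argument relies on the compactness theorem; this is natural since the present theorem is in essence a reformulation of the existence of prime ideals in nontrivial distributive lattices.
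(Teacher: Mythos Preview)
Your proposal is correct and matches the paper's own argument, which is given in the paragraph immediately preceding the theorem rather than as a separate proof: the paper observes that a model of $T$ is exactly an element of $\Spec(L_T)$ and then the nontriviality of $L_T$ yields a point of the spectrum. The only slight difference is that the paper also invokes Proposition~\ref{propTrBoo} to identify the Lindenbaum Boolean algebra of $T$ with the free Boolean algebra on $L_T$ (so that formal consistency of $T$ is equivalent to $L_T\neq\Un$), whereas you bypass this and appeal directly to Proposition~\ref{propTr2}; both routes amount to the same thing.
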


 The general notion of geometric formula allows also existential
quantification, but we restrict ourselves here to the propositional case.
Even in this restricted form, the notion of geometric theory is
fundamental. For instance, if $R$ is a commutative ring, we can
consider the theory with atomic propositions
$D(x)$ for each $x\in R$ and with axioms
\begin{itemize}
\item $D(0_R)\rightarrow 0$
\item $1\rightarrow D(1_R)$
\item $D(x)\wedge D(y)\rightarrow D(xy)$
\item $D(xy)\rightarrow D(x)$
\item $D(x+y)\rightarrow D(x)\vee D(y)$
\end{itemize}
This is a geometric  theory $T$. The model of this theory are
clearly the complement of the prime ideals. What is remarkable is that, while the
existence of models of this theory is a nontrivial fact
which may be dependent on set theoretic axioms (such as dependent axiom of choices)
its formal consistency is completely elementary (as explained in the
beginning of the section \ref{secZariKrull}).
This geometric theory, or the distributive lattice it generates,
can be seen as a point-free description of the Zariski
spectrum of the ring. The distributive lattice
generated by this theory (called in this paper the Zariski lattice of $R$)
is isomorphic to the lattice
of compact open of the Zariski spectrum of $R$, while the Boolean
algebra generated by this theory is isomorphic to the algebra
of the constructible sets.


\end{document}